\NeedsTeXFormat{LaTeX2e}
[1994/12/01]
\documentclass[reqno,13pt]{amsart}
\pagestyle{headings}

\usepackage{bbm}
\def\1{\mathbbm{1}}

\usepackage{amsmath,amsthm}
\usepackage[mathscr]{euscript}
\usepackage{graphicx}
\usepackage{amsmath,amssymb}
\usepackage{circuitikz}

\pagestyle{headings}

\usepackage{amsmath}

\usepackage{amsfonts}

\usepackage{hyperref}

\hypersetup{
	colorlinks=true,                         
	linkcolor=red, 
	citecolor=blue, 
	urlcolor=red  } 

\usepackage{float}

\usepackage[percent]{overpic}

\usepackage{multicol}

\usepackage{subfloat}

\usepackage{graphics}

\usepackage{subcaption}

\usepackage{dblfloatfix}

\usepackage{fixltx2e}

\usepackage{latexsym}

\usepackage{fancybox}

\usepackage[utf8]{inputenc}

\usepackage{amssymb}


\usepackage{fancyhdr}

\usepackage{amsthm}

\usepackage{cases}

\usepackage{tikz}





\newtheorem{theorem}{Theorem}[section]

\newtheorem{lemma}{Lemma}[section]

\newtheorem{remark}{Remark}[section]

\newtheorem{definition}{Definition}[section]

\newtheorem{proposition}{Proposition}[section]

\newtheorem{mainassumptions}{ Assumptions}[section]

\numberwithin{equation}{section}

\parindent1em

\sloppy \pagestyle{plain} \textwidth=16cm \textheight=22.5cm

\oddsidemargin=0cm \evensidemargin=0cm

\title{Null Controllability for a Degenerate Structured Population Model}

\author{Yacouba Simpore,  Yassine El gantouh and  Umberto Biccari}

\thanks{The authors wish to thank Prof. Enrique Zuazua for his comments and suggestions.}

\address{Yacouba Simpore, Laboratoire LAMI, Universit\'e  Joseph Ki-Zerbo,01 BP 7021 Ouaga 01, Burkina Faso; simplesaint@gmail.com}
\address{Yassine El gantouh, Departamento de Matem\'{a}ticas, Universidad Aut\'{o}noma de Madrid, 28049 Madrid, Spain, elgantouhyassine@gmail.com}

\address{Umberto Biccari, Chair of Computational Mathematics, Fundación Deusto, Universityof Deusto, 48007 Bilbao, Basque Country, Spain}


\thanks{This work has been supported by the European Research Council (ERC) under the European Union’s Horizon 2020 research and innovation programme (grant agreement No 694126-DYCON). The
	work of U.B. is supported by the Grant PID2020-112617GB-C22 KILEARN of MINECO (Spain).}

\vskip 0.5cm

\subjclass[2010]{35F46, 93B05, 93C20}

\keywords{}


\vskip 0.5cm

\subjclass[2010]{93B03, 93B05, 92D25}

\keywords{Population dynamics, Null controllability}

\begin{document}

	\maketitle
	
	\maketitle

	\renewcommand{\sectionmark}[1]{}
	
	\begin{abstract}
		In this paper, we consider the infinite dimensional linear control system describing population models structured by age, size, and spatial position. The diffusion coefficient is degenerate at a point of the domain or both extreme points. Moreover, the control is localized in the space variable as well as with respect to the age and size. For each control support, we give an estimate of the time needed to control the system to zero. We establish the null controllability of the model by using a technique that avoids the explicit use of parabolic Carleman estimates. Indeed, our argument relies on a method that combines final-state observability estimates with the use of the characteristic method.
	\end{abstract}

	\section{Well-posedness and mains results}
	It has been recognized that age structure alone is not adequate to explain the population dynamics of some species (see e.g. \cite{b9,o,kk,z}). The size of individuals could also be used to distinguish cohorts. In principle there are many ways to differentiate individuals addition to age, such as body size and dietary requirements or some other physiological variables and behavioral parameters. For the sake of simplicity and the reason of similarity of mathematical treatment we assume here that only one internal variable is involved. Meanwhile, we consider the velocity of internal variable to be constant. It should be noted that this assumption is not restrictive, as it has been pointed out that the problem where the growth of an internal variable does not increase at the same rate as age can be transformed into the constant rate case.

	In this article, we are interested in the controllability properties of one-dimensional, degenerate, size- and age-structured population models described as (see e.g. \cite{b9})
	\begin{equation}
		\left\lbrace\begin{array}{ll}
			\dfrac{\partial y}{\partial t}+\dfrac{\partial y}{\partial a}+\dfrac{\partial y }{\partial s}-k(x)\dfrac{\partial^2 y}{\partial x^2}-b(x)\dfrac{\partial y}{\partial x}+\mu(a,s)y=mu, &\hbox{ in }Q ,\\ 
			y(0,a,s,t)=y(1,a,s,t)=0, &\hbox{ on }\Sigma,\\ 
			y\left( x,0,s,t\right) =\displaystyle\int\limits_{0}^{A}\beta(a)y(x,a,s,t)da, &\hbox{ in } Q_{S,T} \\
			y\left(x,a,s,0\right)=y_{0}\left(x,a,s\right),&
			\hbox{ in }Q_{A,S};\\
			y(x,a,0,t)=0,& \hbox{ in } Q_{A,T},
		\end{array}\right.
		\label{2}
	\end{equation}
	where $Q_{A,S}:=(0,1)\times (0,A)\times (0,S)$, $Q:=(0,1)\times (0,A)\times (0,S)\times(0,T)$, $\Sigma:=(0,A)\times (0,S)\times(0,T)$, $Q_{S,T}:=(0,1)\times (0,S)\times (0,T)$,  and $Q_{A,T}:=(0,1)\times (0,A)\times (0,T)$. Here $y(x,a,s,t)$ represents the population density of certain species of age $a\in (0,A)$ and size $s\in (0,S)$ at time $t\geq 0$ and location $x\in (0,1)$, where $A>0$ and $S>0$ are the maximal age of life and the maximal size of individuals, respectively. The age-dependent function $\beta$ denote the natural fertility and hence the formula $\int_{0}^{A}\beta(a)yda $ determine the density of newborn individuals of size $s$ at time $t$ and location the point $x$. The age- and size-dependent function $\mu$ denotes the death rate and we assume that it satisfies $\mu(a,s)=\mu_1(a)+\mu_2(s)$. The space-dependent diffusion coefficient $k$ can degenerate at one point of the domain or at both extreme points. For $0\leq a_1<a_2\leq A$ and $\omega=(l_1,l_2)\subset (0,1)$, we denotes by $m$ the characteristic function of $Q_{1}:=\omega\times (a_1,a_2)\times (s_1,s_2)$, which is the region where the control $u$ acts. The initial distribution of the population is $y_{0}\left(x,a,s\right)$. For more details about the modeling of such system we refer to G. Webb \cite{b9}.

	All along this paper, we assume that the fertility rate $\beta$ and the mortality rate satisfies the demographic properties:
	
	\begin{align*}
		(	{\bf H1}):&
		\left\lbrace
		\begin{array}{l}
			\mu_1(a)\geq 0 \text{ for every } a\in (0,A)\\
			\mu_1\in L^{1}\left([0,a^*]\right)\hbox{ for every }\; a^*\in [0,A) \\
			\displaystyle\int\limits_{0}^{A}\mu_1(a)da=+\infty
		\end{array} \right.
		,\quad
		(	{\bf H2}):
		\left\lbrace\begin{array}{l}
			\mu_2(s)\geq 0 \hbox{ for every } s\in (0,S)\\
			\mu_2\in L^{1}\left([0,s^*]\right)\hbox{ for every }\hbox{ } s^*\in [0,A) \\
			\displaystyle\int\limits_{0}^{S}\mu_2(s)ds=+\infty
		\end{array} \right.\cr
		(	{\bf H3}):&
		\left\lbrace\begin{array}{l}
			\beta(\cdot)\in C\left([0,A]\right)\cr 
			\beta(s) \ge 0 \quad {\rm  for \; a.e.} \quad a\in (0,A),\\
		\end{array}
		\right. \qquad
		(	{\bf H4}):
		\begin{array}{c}
			\beta(a)=0 \quad \forall a\in (0,\hat{a}) \text{ for some } \hat{a}\in (0,A). 
		\end{array}
	\end{align*}
	
	For more details about the modeling of such system and the biological significance of the hypotheses, we refer to G. Webb \cite{b9}.

	\textbf{NB :} All the figures on this paper represented in dimension two are in reality in dimension three; They are cubic in shape seen from above. The height represents the time $t$. The characteristics evolve from top to bottom but in the positive direction with respect to the age variable $a$ and the size variable $s.$
	
	\begin{figure}[H]
		
		\begin{center}
			
			\begin{overpic}[scale=0.25]{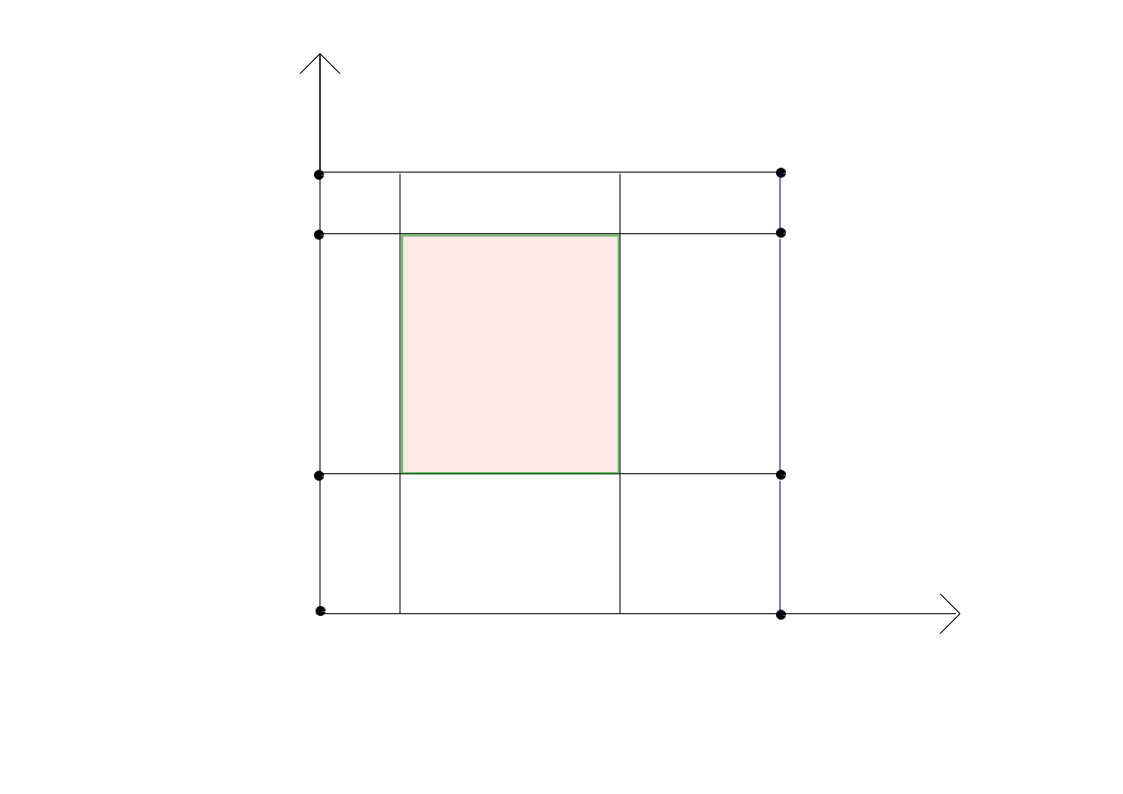}
				
				\put (21,53) {$S$}
				
				\put (22,49.5) {$s_2$}
				
				\put (40,34.5) {$Q_1$}	
				
				\put (22,27.5) {$s_1$}
				
				\put (35,12) {$a_1$}
				
				\put (52,12) {$a_2$}
				
				\put (67,10.5) {$A$}
				
			\end{overpic}
			
			\caption{Here is the support $m$ without the space variable see from above}.
			
		\end{center}
		
	\end{figure}
	
	Let $Q'$ denotes the domain $Q$ without the space variable, i.e., $Q'=Q\setminus(0,1)$. We split the domain $Q'$ as follow:
	
	\begin{align}\label{domain}
		A_1&:=\{(a,s,t)\in Q' \text{ such that } 0< t< A-a \text{ and } 0< t< S-s\}, \cr A'_1&:=\{(a,s,t)\in Q' \text{ such that } S-s>t>A-a>0\text{ or } t>S-s>A-a>0\},\cr
		A'_2&:=\{(a,s,t)\in Q' \text{ such that } A-a>t>S-s>0\text{ or } t>A-a>S-s>0\}.
	\end{align}
	
	To simplify notation we denote by $A_2=A'_1\cup A'_2$ and hence $Q'=A_1\cup A_2$. The figures below illustrate the domain $Q'$:
	
	\begin{figure}[H]
		
		\begin{subfigure}{0.4\textwidth}
			
			\begin{overpic}[scale=0.25]{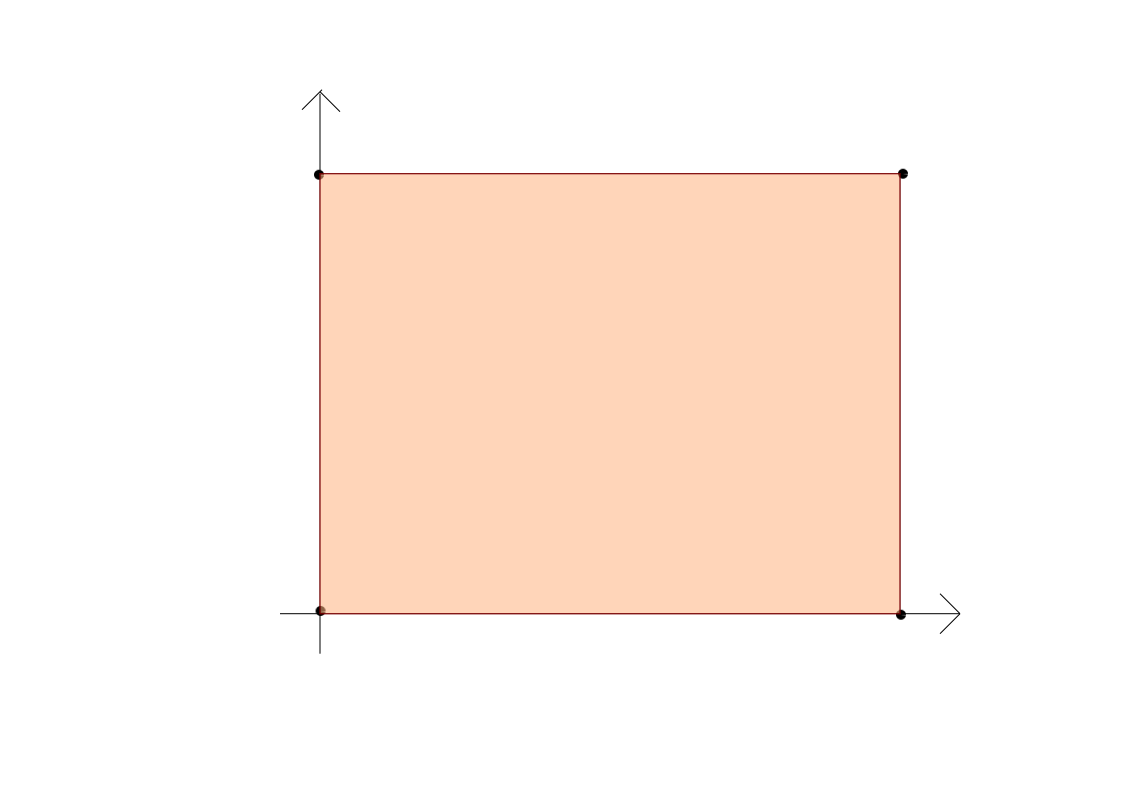}
				
				\put (21,53) {$S$}	
				
				\put (50,35) {$A_1$}
				
				\put (78.5,10.5) {$A$}
				
			\end{overpic}
			
			\subcaption{Here is the section $(t=0)$ of $Q'$}
			
		\end{subfigure}
		
		\quad
		
		\begin{subfigure}{0.5\textwidth}
			
			\begin{overpic}[scale=0.25]{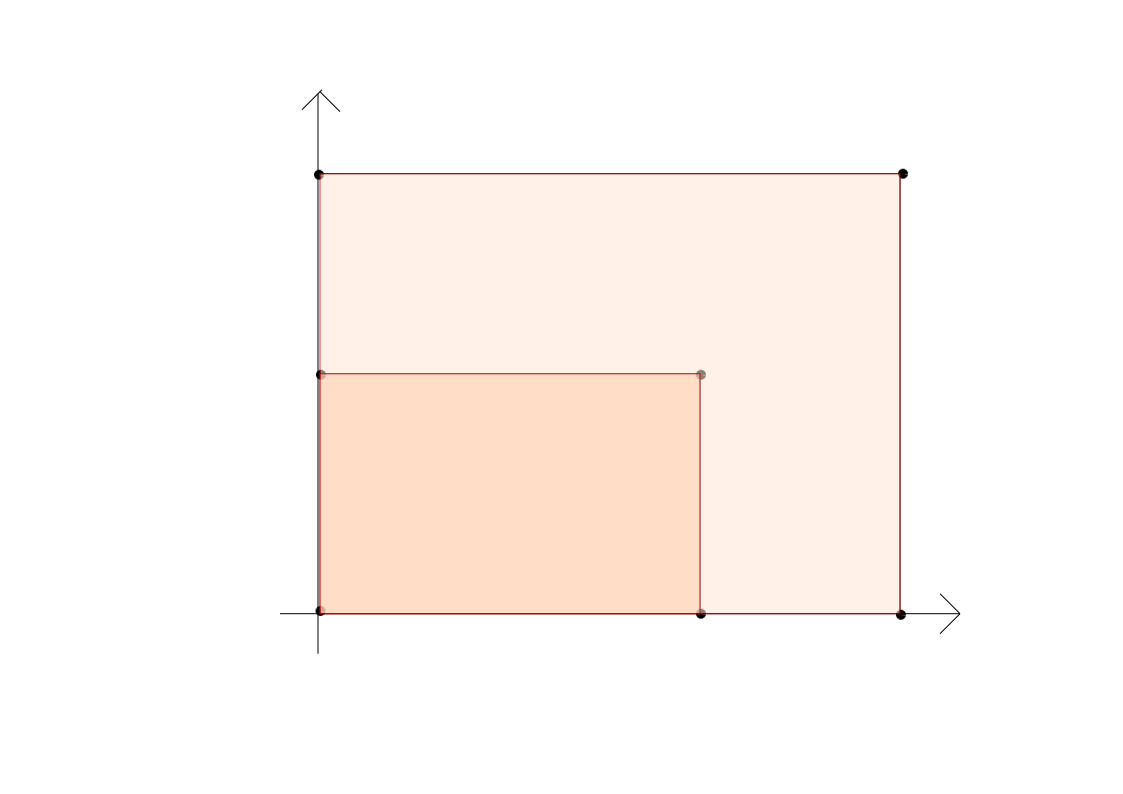}
				
				\put (22,53) {$S$}
				
				\put (42,25.5) {$A_1$}
				
				\put (52,42.5) {$A_2$}				
				
				\put (79,10.5) {$A$}
				
				\put (13,36.5) {$S-\alpha$}
				
				\put (55,10.5) {$A-\alpha$}	
				
			\end{overpic}
			
			\subcaption{Here is the section $(t=\alpha)$ of $Q'$ where $\alpha\in (0,\min\{A,S\}).$}
			
		\end{subfigure}
		
	\end{figure}
	
	\begin{center}
		
		\begin{figure}[H]
			
			\begin{overpic}[scale=0.25]{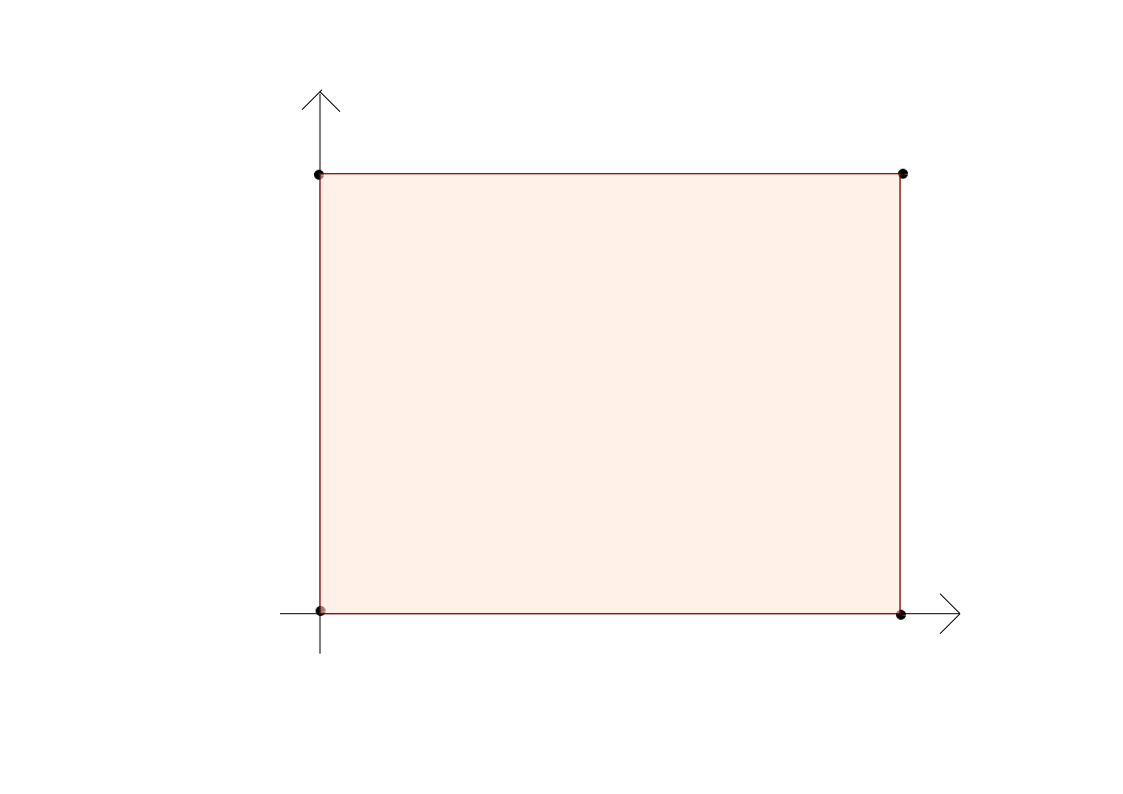}
				
				\put (20,53) {$S$}	
				
				\put (50,35) {$A_2$}
				
				\put (79.5,10.5) {$A$}	
				
			\end{overpic}
			
			\subcaption{Here is the section $(t=\alpha)$ where $\alpha\geq\min\{A,S\}$ of $Q'.$}
			
		\end{figure}
		
	\end{center}

	Before going further and stating the main result of this paper, we need to introduce some notations and assumptions. We start with the following assumptions on the diffusion coefficient $k$.
	
	\begin{mainassumptions}\label{Main1}
		
		To study well-posedness of \eqref{2} we assume that $k\in C([0,1])$ and there exist $M_1\hbox{, }M_2\in (0,2)$ such that \[k\in C^1(0,1)\hbox{, }k>0\hbox{ in }(0,1)\hbox{, }k(0)=k(1)=0,\]
		
		\[xk'(x)\leq M_1k(x)\hbox{ and }(x-1)k'(x)\leq M_2k(x)\]
		
		for all $x\in[0,1],$ or \[k\in C^1((0,1])\hbox{, }k>0\hbox{ in }(0,1]\hbox{, }k(0)=0\hbox{ and }xk'(x)\leq M_1k(x)\]
		
		for all $x\in[0,1],$ or
		
		\[k\in C^1((0,1])\hbox{, }k>0\hbox{ in }[0,1)\hbox{, }k(1)=0\hbox{ and }(x-1)k'(x)\leq M_2k(x)\]
		
		for all $x\in[0,1]$. On the other hand, \[b\in C([0,1])\cap C^2(0,1)\hbox{ and }\dfrac{b}{k}\in L^1(0,1).\] 
		
	\end{mainassumptions}
	
	Next, let us consider the weight function $\gamma$ defined by (cf. \cite{12})
	
	$$ \gamma(x):=\exp\left(\int\limits_{x}^{\frac{3}{4}}\dfrac{b(y)}{k(y)}dy\right)\qquad x\in[0,1],$$
	
	and define 
	
	$$\sigma(x):=k(x)\dfrac{1}{\gamma(x)}, \qquad x\in[0,1].$$
	
	We then select the following weighted Lebesgue and Hilbert spaces
	
	\begin{align*}
		L^{2}_{\frac{1}{\sigma}}(0,1)&:=\{u\in L^2(0,1)|\;\|u\|_{\frac{1}{\sigma}}<+\infty\},\qquad\|u\|^{2}_{\frac{1}{\sigma}}=\int\limits_{0}^{1}u^2\frac{1}{\sigma}dx,\cr
		H^{1}_{\frac{1}{\sigma}}(0,1)&:=L^{2}_{\frac{1}{\sigma}}(0,1)\cap H^{1}_{0}(0,1) \qquad\|u\|^{2}_{1,\frac{1}{\sigma}}=\|u\|^{2}_{\frac{1}{\sigma}}+\int\limits_{0}^{1}u^{2}_{x}dx,
	\end{align*}
	
	For $u$ sufficiently smooth, e.g. $u\in W^{2,1}_{loc}(0,1)$, we set
	
	$$Lu:=ku_{xx}-b(x)u_{x},$$ 
	
	for almost every $x\in (0,1)$. Moreover for all $(u,v)\in H^{2}_{\frac{1}{\sigma}}(0,1)\times H^{1}_{\frac{1}{\sigma}}(0,1),$ 
	
	$$(Lu,v)_{\frac{1}{\sigma}}=-\int\limits_{0}^{1}\gamma u_x v_x dx,$$
	
	where
	
	\[H^{2}_{\frac{1}{\sigma}}(0,1):=\{u\in H^{1}_{\frac{1}{\sigma}}(0,1)|L u\in L^{2}_{\frac{1}{\sigma}}(0,1)  \} \quad \|u\|^{2}_{2,\frac{1}{\sigma}}=\|u\|^{2}_{1,\frac{1}{\sigma}}+\|L u\|^{2}_{\frac{1}{\sigma}}.\]

	On the other hand, to establish the null controllability result of \eqref{2}, we additionally assume that $k$ satisfies the following assumptions (see, e.g. \cite{canard1})
	
	\begin{mainassumptions}\label{Main2}
		The function $k\in C^0([0,1])\cap C^3(0,1)$ is such that satisfies $k(0)=k(1)=0,$ and $k>0\hbox{ in }(0,1).$\\ Moreover,
		\begin{enumerate}
			
			\item there exists $\epsilon\in (0,1)\hbox{, }N_1\in (0,2)\hbox{ and }C_1>0$ such that the function \[\dfrac{x(b-k_x)}{k}\in L^{\infty}(0,\epsilon)\hbox{, }x(k)_x\leq N_1k(x)\hbox{ for all }x\in (0,\epsilon),\]
			
			there exists a function \[C_1=C_1(\epsilon)>0\hbox{ defined in }(0,\epsilon)\hbox{ such that }C_1(\epsilon')\longrightarrow 0\hbox{ as }\epsilon'\longrightarrow 0^+\]
			
			and
			
			\[\left|\left(\dfrac{x(b-k_x)}{k}\right)_{xx}-\dfrac{b}{a}\left(\dfrac{x(b-a_x)}{a}\right)\right|\leq \dfrac{C_1(\epsilon')}{x^2}\hbox{ for all }x\in(0,\epsilon').\]
			
			\item
			
			\[\dfrac{(x-1)(b-k)_x}{k}\in L^{\infty}(1-\epsilon,1)\hbox{ and there exists } N_2\in(0,2) \hbox{ and }C_2>0\hbox{ such that },\]

			\[\left(\dfrac{(x-1)(k)_x}{k}\right)_{xx}<\dfrac{C_2}{k}\hbox{ for all }x\in (1-\epsilon,1),\]
			
			and there exists a function \[C_2=C_2(\epsilon)>0\hbox{ defined in }(1-\epsilon,1)\hbox{ such that }C_1(\epsilon')\longrightarrow 0\hbox{ as }\epsilon'\longrightarrow 0^+\]
			
			\[\left|\left(\dfrac{(x-1)(b-k_x)}{k}\right)_{xx}-\dfrac{b}{a}\left(\dfrac{(x-1)(b-a_x)}{a}\right)\right|\leq \dfrac{C_1(\epsilon')}{(x-1)^2}\hbox{ for all }x\in(1-\epsilon',1).\]
			
		\end{enumerate}
		
	\end{mainassumptions}

	After such a long but necessary preparation we can now clearly state the main results of this paper. The well-posedness of \eqref{2} follows from the following result.
	
	\begin{theorem}\label{MainT1}
		
		Let the conditions {\bf (H1)}-{\bf (H3)} be satisfied. Furthermore, let assume that Assumption \ref{Main1} holds. Let $f\in L^{2}_{\frac{1}{\sigma}}(Q)$ and $u_0\in L^{2}_{\frac{1}{\sigma}}(Q_{A,S})$, then $(\ref{2})$ has a unique solution \[u\in \mathcal{U}:=C\left([0,T],L^{2}_{\frac{1}{\sigma}}(Q_{A,S})\right) \cap L^2\left(0,T;H^{1}(0,A)\times H^1(0,S) \times H^{1}_{\frac{1}{\sigma}}(0,1)\right).\]
		If, in addition, $f=0$, then 
		\begin{align*}
			u\in C^1\left([0,T];L^{2}_{\frac{1}{\sigma}}(Q_{A,S})\right).
		\end{align*}
		
	\end{theorem}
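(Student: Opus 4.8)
The plan is to recast \eqref{2} as an abstract Cauchy problem on the Hilbert space $X := L^{2}_{\frac{1}{\sigma}}(Q_{A,S})$ and to solve it by semigroup theory, the weighted framework being precisely what renders the degenerate operator tractable. Writing $y(t) := y(\cdot,\cdot,\cdot,t)$, the system reads $y'(t) = \mathcal{A}\,y(t) + f(t)$ with $y(0)=y_0$, where $\mathcal{A} y := L y - \partial_a y - \partial_s y - \mu\,y$, $L = k\partial_{xx} - b\partial_x$, on the domain
\[
D(\mathcal{A}) := \Big\{ y : Ly,\ \partial_a y,\ \partial_s y \in X,\ y(\cdot,\cdot,0)=0,\ y(\cdot,0,\cdot) = \textstyle\int_0^A \beta(a)\,y(\cdot,a,\cdot)\,da \Big\},
\]
so that the Dirichlet condition in $x$, the size boundary condition, and the nonlocal renewal condition in age are all encoded in $D(\mathcal{A})$. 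The first input is that, under Assumption \ref{Main1}, the degenerate elliptic operator $L$ with domain $H^{2}_{\frac{1}{\sigma}}(0,1)$ generates an analytic (indeed self-adjoint) semigroup on $L^{2}_{\frac{1}{\sigma}}(0,1)$: this follows from the symmetric form identity $(Lu,v)_{\frac{1}{\sigma}} = -\int_0^1 \gamma u_x v_x\,dx$ recorded above, together with the coercivity of $u \mapsto \int_0^1 \gamma u_x^2\,dx$ on $H^{1}_{\frac{1}{\sigma}}(0,1)$, which is exactly where the structural inequalities $xk'\le M_1 k$ and $(x-1)k'\le M_2 k$ enter through a weighted Hardy--Poincar\'e inequality.

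Next I would establish that $\mathcal{A}$ generates a $C_0$-semigroup on $X$. I would first treat the auxiliary operator $\mathcal{A}_0$ obtained by replacing the renewal condition with the homogeneous one $y(\cdot,0,\cdot)=0$. Since $\partial_a$ and $\partial_s$ generate nilpotent translation semigroups and $\mu = \mu_1+\mu_2 \ge 0$ is a nonnegative multiplication operator, $\mathcal{A}_0$ is dissipative in the $\tfrac1\sigma$-inner product: integrating $(\mathcal{A}_0 y,y)_{\frac1\sigma}$ by parts gives $-\int_0^1\gamma y_x^2\,dx$ from $L$, nonnegative boundary contributions from the transport terms (using $y(\cdot,0,\cdot)=0$), and the negative term $-(\mu y,y)_{\frac1\sigma}$. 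Together with the range condition, reduced to the analytic resolvent of $L$ coupled with transport ODEs in $a,s$, Lumer--Phillips yields a contraction $C_0$-semigroup. The renewal condition is then reinstated as a perturbation: by \textbf{(H4)}, $\beta$ vanishes on $(0,\hat a)$, so the boundary feedback $y\mapsto \int_0^A\beta y\,da$ has a Volterra (finite-delay) structure and the perturbed problem is solved by a fixed-point iteration that is a strict contraction on $C([0,\tau];X)$ for $\tau$ small, then extended to $[0,T]$ by concatenation. Multiplying by $y/\sigma$ and integrating, the parabolic gain in $x$ yields the energy estimate upgrading the mild solution to $y\in C([0,T];X)\cap L^2(0,T;H^1(0,A)\times H^1(0,S)\times H^{1}_{\frac{1}{\sigma}}(0,1))$.

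Uniqueness follows from the same energy identity: the difference of two solutions with identical data satisfies a homogeneous inequality to which Gronwall applies in $X$, the nonlocal term being absorbed via the boundedness of $\beta$ from \textbf{(H3)}. For the improved regularity when $f=0$, I would exploit the analytic smoothing of $e^{tL}$: along each characteristic the density $z(\tau):=y(\cdot,a_0+\tau,s_0+\tau,t_0+\tau)$ solves the homogeneous analytic evolution $z' = Lz - \mu z$, so it is instantaneously regularized in $x$; combined with the $V$-regularity already obtained and the identity $\partial_t y = \mathcal{A}y = Ly-\partial_a y-\partial_s y-\mu y$, this places the right-hand side in $C([0,T];X)$ and hence gives $y\in C^1([0,T];L^{2}_{\frac{1}{\sigma}}(Q_{A,S}))$, the vanishing of $f$ being essential to avoid a source term of merely $L^2$-in-time regularity.

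The step I expect to be the main obstacle is the generation result for the diffusion part on the weighted space, i.e. proving that $L$ is m-dissipative (equivalently, that the associated form is closed and coercive) under the sharp degeneracy conditions of Assumption \ref{Main1}; this is where the weight $1/\sigma$ and the Hardy-type inequality are indispensable, and where the three cases of one-point versus two-point degeneracy must each be tracked. A secondary difficulty is the correct functional treatment of the nonlocal renewal boundary condition so that the perturbation argument remains within the generator framework, for which hypothesis \textbf{(H4)} is the decisive structural feature.
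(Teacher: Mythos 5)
Your overall route --- recasting \eqref{2} as an abstract Cauchy problem on $K=L^{2}_{\frac{1}{\sigma}}(Q_{A,S})$, obtaining m-dissipativity of the degenerate diffusion from the form identity $(Lu,v)_{\frac{1}{\sigma}}=-\int_0^1\gamma u_xv_x\,dx$ plus a weighted Hardy--Poincar\'e inequality, handling the transport terms by Lumer--Phillips, and reinstating the renewal condition by a small-time fixed point --- is essentially the approach the paper itself takes: in fact the paper never proves Theorem \ref{MainT1} at all, but reduces it to the generation result of Lemma \ref{generation-result}, which it attributes to ``a similar argument as in \cite{Genni}'' (the same Cannarsa--Fragnelli--Rocchetti weighted framework you describe). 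So for existence, uniqueness and continuity in time, your sketch is a legitimate filling-in of what the paper outsources to the literature.

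There are, however, two concrete gaps. First, you invoke {\bf (H4)} to give the renewal map a ``Volterra (finite-delay) structure'' and even call it ``the decisive structural feature''; but Theorem \ref{MainT1} assumes only {\bf (H1)}--{\bf (H3)}, so the well-posedness proof must not use {\bf (H4)}. This is repairable: your small-time contraction needs nothing beyond $\|\beta\|_{\infty}<\infty$, which {\bf (H3)} provides. Second, and more seriously, your argument for the regularity assertions in the transport directions fails. Analytic smoothing of $e^{tL}$ acts only in $x$; there is no smoothing whatsoever in $(a,s)$. Differentiability of $t\mapsto u(t)$ in $K$ requires the directional derivative $(\partial_a+\partial_s)$ of the transported datum to exist in $L^2$: taking $\beta\equiv 0$, neglecting the (harmless) mortality factor, and choosing $u_0(x,a,s)=\phi(x)\psi(a,s)$ with $\phi$ smooth and $\psi\in L^2$ having no $L^2$-directional derivative along $(1,1)$, the solution satisfies $u(x,a,s,t)=(e^{tL}\phi)(x)\,\psi(a-t,s-t)$ on $\{a>t,\ s>t\}$, which is not $C^1$ in $t$ with values in $L^2$, no matter how much $x$-regularity $e^{tL}$ supplies. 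The $C^1$ conclusion is the standard statement ``$t\mapsto e^{t\mathcal{A}}u_0$ is $C^1$ for $u_0\in D(\mathcal{A})$'' and requires domain data (or a reinterpretation as differentiability along characteristics); your mechanism of trading parabolic gain in $x$ for time-differentiability cannot close this step. The same objection applies to the $H^{1}(0,A)\times H^1(0,S)$ components of $\mathcal{U}$: multiplying by $u/\sigma$ and integrating produces only $\int_0^T\!\int\gamma u_x^2\,dx\,dt$, i.e. regularity in $x$, and yields nothing in $a$ or $s$, so your energy estimate does not deliver the space you claim it does.
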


	We select the following definition.
	
	\begin{definition}
		
		The real $\hat{a}$ is the minimal age from which individuals become fertile. We will call it the \emph{ minimal age of fertility}.
		
	\end{definition}
	
	For $0 \leq s_1 \leq s_2 \leq S $, let denote 
	
	$$T_0=\max\{s_1, S-s_2\} \hbox{ and } T_1=\max\{s_1,a_1+S-s_2\}.$$

	Thus, our main result is the following.
	
	\begin{theorem}\label{MainT2}
		
		Let the assumptions {\bf (H1)}-{\bf (H4)} be satisfied and assume that the Assumptions \ref{Main1}-\ref{Main2} hold. Furthermore, we assume that $a_1< \hat{a}$, $T_0<\min\{a_2-a_1,\hat{a}-a_1\}$. Then for every $T>A-a_2+a_1+S-s_2+s_1$ and for every $y_0\in L^{2}_{\frac{1}{\sigma}}(Q_{A,S}) $
		
		there exists a control $u_1\in L^{2}_{\frac{1}{\sigma}}(Q_1)$ such that the solution $y$ of \eqref{2} satisfies 
		
		\[ y(x,a,s,T)=0\hbox{ a.e }x \in (0,1)\hbox{ } a \in (0,A)\hbox{ and } s \in (0,S).\]	
		
	\end{theorem}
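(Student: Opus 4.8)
The plan is to prove null controllability by duality, reducing the problem to an observability inequality for the adjoint system and then establishing that inequality by combining the method of characteristics in the transport variables $(a,s)$ with a known final-state observability estimate for the one-dimensional degenerate parabolic operator $Lu=ku_{xx}-bu_x$. This is precisely the strategy announced in the abstract: the hard, Carleman-based analysis is imported as a black box for the $x$-dynamics, while the genuinely new work is the transport-plus-renewal reduction.

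First I would write down the adjoint system associated with \eqref{2}. After the time reversal $t\mapsto T-t$, the adjoint state $z$ solves a problem whose principal part is again $\partial_t z+\partial_a z+\partial_s z-Lz+\mu z$, with homogeneous Dirichlet conditions in $x$, the homogeneous outflow conditions $z(x,A,s,t)=0$ and $z(x,a,S,t)=0$ replacing the faces $a=A$ and $s=S$, prescribed final datum at $t=T$, and — crucially — the nonlocal renewal condition $y(x,0,s,t)=\int_0^A\beta(a)y\,da$ turning into the coupling term $\beta(a)\,z(x,0,s,t)$ inside the equation. The Hilbert Uniqueness Method then reduces the existence of a control $u_1\in L^{2}_{\frac{1}{\sigma}}(Q_1)$ driving $y$ to zero to the observability inequality bounding the relevant energy of $z$ by $\int_{Q_1}|z|^2$.

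The core step is the characteristic reduction. Along a characteristic $a=a_0+\tau$, $s=s_0+\tau$ the transport operator $\partial_t+\partial_a+\partial_s$ becomes the total derivative in $\tau$, so on each such line the adjoint collapses to a single one-dimensional degenerate parabolic equation in $(x,\tau)$, the multiplicative factor coming from $\mu(a,s)=\mu_1(a)+\mu_2(s)$ and the renewal term entering as an inhomogeneity supported where $\beta\neq 0$. For each fixed characteristic I would invoke the final-state observability estimate for the degenerate parabolic equation — valid under Assumptions \ref{Main1}--\ref{Main2} (cf. \cite{canard1}) — which bounds $z$ at the end of a controllable interval by its $L^2(\omega)$-norm over that interval, with a constant depending only on the interval length, not on the characteristic. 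The decomposition $Q'=A_1\cup A_2$ is exactly what records which characteristics are fed by the final datum (region $A_1$) and which are annihilated by the homogeneous faces $a=A$, $s=S$ (region $A_2$), so that integrating the pointwise-in-characteristic estimates over the whole family yields the global inequality. The time condition $T>(A-a_2)+a_1+(S-s_2)+s_1$, together with $T_0<\min\{a_2-a_1,\hat a-a_1\}$, is dictated by this geometry: every characteristic carrying nonzero data must spend inside the age window $(a_1,a_2)$ and the size window $(s_1,s_2)$ a $\tau$-interval long enough for the degenerate-parabolic observability to apply, while $T$ must also leave room for the residual mass to be flushed through $a=A$ and $s=S$; the four summands measure precisely the successive transport times consumed before and after the control acts.

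I expect the main obstacle to be the nonlocal renewal term $\int_0^A\beta(a)y\,da$, which couples different ages and thus breaks the clean decoupling of the characteristic reduction. Here hypothesis \textbf{(H4)} and the assumption $a_1<\hat a$ are decisive: since $\beta$ vanishes on $(0,\hat a)$, the birth term only samples ages above $\hat a$, and the inequality $T_0<\hat a-a_1$ guarantees that the control can first drive the fertile-age population to zero on a time strip, after which the renewal source switches off and the remaining sub-fertile and newly born population, now governed by a genuinely decoupled source-free family of characteristics, is transported and observed without reintroducing nonlocality. Making this staged argument quantitative — and checking that the observability constant supplied by \cite{canard1} is uniform and integrable across the entire characteristic family, so that the assembled inequality does not degenerate near the points where $k$ vanishes — is the delicate part of the proof.
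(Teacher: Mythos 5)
Your overall frame --- duality reducing Theorem \ref{MainT2} to a final-state observability inequality for the adjoint system \eqref{112}, then characteristics in $(a,s)$ collapsing the transport part so that the degenerate-parabolic observability of \cite{canard1} (Proposition \ref{observability-inequality}) can be invoked along each line --- is exactly the paper's. But there is a genuine gap in your treatment of the nonlocal renewal term, at precisely the point you flag as ``delicate''. Your plan is to apply the parabolic estimate characteristic by characteristic and then integrate over the whole family, with the region $A_2$ dismissed because its characteristics are ``annihilated by the homogeneous faces $a=A$, $s=S$''. That claim is false for the full adjoint solution: by the representation formula \eqref{alp}, on $A_2$ the solution is not zero but equals $\int_{\max\{t-S+s,t-A+a\}}^{t}e^{(t-r)P}\beta(a+t-r)q(x,0,s+t-r,r)\,dr$, i.e.\ it is entirely generated by the renewal term. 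Consequently, along any backward characteristic that spends time in the fertile region $a>\hat{a}$, the collapsed equation in $(x,\tau)$ carries a nonzero nonlocal source, and Proposition \ref{observability-inequality} (stated for the source-free equation) does not apply there; the per-characteristic estimates cannot be assembled into the global inequality as you propose.

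The paper closes this gap with two steps your proposal lacks. First (Proposition \ref{Prop1}) it estimates the birth trace $q(x,0,s,t)$ itself by the observation: the backward characteristics issued from $(0,s,t)$ remain in the strip $a<\hat{a}$, where {\bf (H4)} makes the equation source-free, and the hypotheses $a_1<\hat{a}$, $T_0<\min\{\hat{a}-a_1,a_2-a_1\}$ guarantee these characteristics cross $Q_1$ before leaving that strip. Second, it splits $q=v_1+v_2$, where $v_1$ solves the renewal-free system \eqref{3220} (to which your characteristic argument does apply, via Propositions \ref{p2} and \ref{1p}), while $v_2$ is the Duhamel term driven by $\beta(a)q(x,0,s,t)$, estimated by an energy computation (Proposition \ref{Propo1}) in terms of the already-controlled birth trace. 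Your substitute for this --- a staged primal argument in which the control first drives the fertile-age population to zero ``after which the renewal source switches off'' --- is not correct as stated: even if the density vanishes at fertile ages at some instant, sub-fertile individuals subsequently age into fertility and renewal resumes, so the source never switches off permanently; and in any case such a staged construction is a heuristic on the control side, not the quantitative trace estimate that the duality argument requires. (By contrast, the uniformity issue you single out as the main worry is unproblematic: the constant in Proposition \ref{observability-inequality} depends only on the length of the time interval, not on the characteristic.)
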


	We now depict some related works in the literature. Many versions of age-size structured models, both linear and nonlinear, have been investigated, and seminal treatments of such models are given by Metz and Diekmann \cite{j} and Tucker and Zimmerman \cite{z}. Spatial structure in linear and nonlinear age or size structured models has attracted many interest in these last years. In \cite{j,b9,z}, the authors have studied, using semigroup theory, the existence and uniqueness of solutions. The null controllability of the Lotka-McKendrick system with or without spatial diffusion have been addressed by several researchers (see, e.g. \cite{yac3,dy,b1,B1,abst,b6,B} and references therein). The first null controllability result was obtained by Ainseba and Anita \cite{b1}. They showed that the so-called Lotka-Mckendrick model with spatial diffusion can be driven to a steady state in any time $T>0$, preserving the positivity of the trajectory, provided that the initial data are close to the steady state and the control acts in a spatial subdomain $\omega\subset\Omega$, but for all ages. Recently, Maity \cite{b6} proved that null controllability can be achieved by controls supported in any subinterval of $(0,A)$, provided we control before individuals start to reproduce. In \cite{b3}, Hegoburu and Tucsnak proved that the same system is null controllable for all ages and in any time by controls localized with respect to the spatial variable but active for all ages. In \cite{Genni} the authors studied the null controllability property for a single population model in which the population $y$ depends on time $t$ space $x,$ age $a$ and size $\tau$ by using Carleman estimates. It should be noted that in their work the growth function $g$ is variable and depends only on the size $s$. More recently, D. Maity, M. Tucsnak and E. Zuazua in \cite{dy} solved the problem of null controllability of a Lotka-Mckendrick model with spatial diffusion, where the control is localized in the space variable as well as with respect to the age. The method combines final-state observability estimates with the use of characteristics and $L^{\infty}$-estimates of the associated semigroup. The same method is used in \cite{yac2}, to establish the null controllability of a nonlinear Lotka-mckendrick system with respect to a control localized in the space variable as well as on the age. The authors Y.Simpor\'e and O.Traor\'e have recently in \cite{yac1} solved a problem of null controllability of a nonlinear age, space and two-sex structured population dynamics model. Since the system is a cascade system and the coupling is at the renewal term level, the approach developed in \cite{dy} was the most suitable to achieve the desired control objective.
	
	This paper addressee the null controllability of a model of population dynamics structured with age, size, and spatial diffusion. Indeed, it has been recognized that age structure alone is not sufficient to explain the population dynamics of some species. The size of individuals could also be used to distinguish cohorts.  Hence the interest in this work.
	
	The main novelties brought in by our paper are:
	
	$\bullet$ In this paper, we revisit the work of \cite{canard} with the growth function $g(s)=1$, but with a control localized in space, age and size. For each control support $m_i,$ we establish an estimate of the time $T$ depending on $m_i$, necessary to control the system at zero using some assumptions on the kernel $\beta.$
	
	$\bullet$ In this paper, we use the approach developed in \cite{dy} which combine final-state observability estimates with the use of the characteristic method. This technique is interesting because it avoids the establishment of Carleman inequality, which seems to be very expensive. In fact, with this technique we obtain a global controllability result that applies to individuals of all ages and sizes without having to exclude ages and sizes in a neighborhood of zero, which is not the case with Carleman's estimates.
	
	The remaining part of this work is organized as follows:
	In Section \ref{Sec2}, we establishes the final state observability on space, age and size for the adjoint system and, as a consequence, we obtain the proof of the main result (Theorem \ref{MainT2})

	\section{An Observability Inequality}\label{Sec2}
	
	It is well-known that the system \eqref{2} can equivalently be rewritten as an abstract evolution system:
	
	\begin{align}\label{Abstract}
		\begin{cases}
			\frac{\partial}{\partial t} y(t)=\mathcal{A}y(t)+\mathcal{B}u(t),& t\geq 0,\cr
			y(0)=y_0,
		\end{cases}
	\end{align}
	
	also to be referred to as $(\mathcal{A},\mathcal{B})$ in the sequel, where we can identify the operators $\mathcal{A}$ and $\mathcal{B}$ through their adjoints by formally taking the inner product of \eqref{Abstract} with a smooth function $\varphi$ see, for instance, \cite[Section 11.2]{b8}. The state and control spaces are
	
	\begin{align*}
		K:=L^{2}_{\frac{1}{\sigma}}(Q_{A,S}),\qquad U:= L^{2}_{\frac{1}{\sigma}}(Q_1).
	\end{align*}
	
	The unbounded linear operator $\mathcal{A}:D(\mathcal{A})\subset K\longrightarrow K$, is defined, for every $y\in D(\mathcal{A})$ by
	
	$$\mathcal{A}y=L y-\dfrac{\partial y}{\partial a}-\dfrac{\partial y}{\partial s}-\left(\mu_1(a)+\mu_2(s)\right)y,$$
	
	with domain
	
	$$D(\mathcal{A}):= \begin{cases}
		y\in L^2((0,A)\times (0,S);H^{2}_{\frac{1}{\sigma}}(0,1)):\quad  \mathcal{A}y\in L^{2}_{\frac{1}{\sigma}}(Q_{A,S}),\cr
		\dfrac{\partial y}{\partial a},\dfrac{\partial y}{\partial s} \in L^2((0,A)\times (0,S);H^{1}_{\frac{1}{\sigma}}(0,1)),\cr
		y(1,\cdot,\cdot)=y(0,\cdot,\cdot)=0,\cr
		y(x,a,0)=0,\quad y(x,0,s)=\int_{0}^{A}\beta(a)y(x,a,s,t)da\quad {\rm for \; a.e}\quad x\in (0,1).
	\end{cases}$$

	A similar argument as in \cite{Genni} yields the following generation result.
	
	\begin{lemma}\label{generation-result}
		Let the conditions {\bf (H1)}-{\bf (H3)} be satisfied and let assume that Assumption \ref{Main1} holds. Then the operator $(\mathcal{A},D(\mathcal{A}))$ generates a C$_0$-semigroup $\mathfrak{T}:=(e^{t\mathcal{A}})_{t\geq 0}$ on $K$.
	\end{lemma}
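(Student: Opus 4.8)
The plan is to combine the Lumer--Phillips theorem with a boundary-perturbation argument that isolates the nonlocal renewal term, exactly in the spirit of \cite{Genni}. First I would introduce the auxiliary operator $\mathcal{A}_0$, defined as $\mathcal{A}$ but with the birth condition at $a=0$ replaced by the homogeneous condition $y(x,0,s)=0$ (all other conditions of $D(\mathcal{A})$ being kept). The key point is that $\mathcal{A}_0$ is dissipative on $K=L^{2}_{\frac{1}{\sigma}}(Q_{A,S})$. Indeed, testing $\mathcal{A}_0 y$ against $y$ in the weighted inner product $(\cdot,\cdot)_{\frac{1}{\sigma}}$, the diffusion part contributes $(Ly,y)_{\frac{1}{\sigma}}=-\int_0^1\gamma\,y_x^2\,dx\le 0$ by the weak identity stated above; integrating by parts in $a$ and in $s$, the two transport terms produce only the outgoing boundary contributions at $a=A$ and $s=S$, both nonpositive, since the incoming traces at $a=0$ and $s=0$ vanish; and the mortality term gives $-\int(\mu_1+\mu_2)\,y^2/\sigma\le 0$ because $\mu_1,\mu_2\ge 0$ by {\bf (H1)}--{\bf (H2)}. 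Hence $(\mathcal{A}_0 y,y)_{\frac{1}{\sigma}}\le 0$ for every $y\in D(\mathcal{A}_0)$.

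The analytic core of the argument is the range condition: that $\lambda I-\mathcal{A}_0$ is surjective for some $\lambda>0$. Here Assumption \ref{Main1} is precisely what makes the degenerate operator $L=k\,\partial_{xx}-b\,\partial_x$, realized in $L^2_{\frac{1}{\sigma}}(0,1)$ with Dirichlet conditions, self-adjoint, dissipative and the generator of an analytic semigroup; this is the weighted degenerate-parabolic framework underlying the spaces $H^1_{\frac{1}{\sigma}}$ and $H^2_{\frac{1}{\sigma}}$. Given $f\in K$, I would solve $\lambda y+\partial_a y+\partial_s y-Ly+(\mu_1+\mu_2)y=f$ by exploiting the transport structure: along the characteristics $a\mapsto a+\tau$, $s\mapsto s+\tau$ the operator $\partial_a+\partial_s$ is the derivative in $\tau$, so the equation becomes an abstract inhomogeneous Cauchy problem in the $x$-variable driven by $L-(\mu_1+\mu_2)-\lambda$, which is integrated from the incoming boundary (where $y=0$) by means of the analytic semigroup. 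This yields a unique $y$ with $\partial_a y,\partial_s y\in L^2((0,A)\times(0,S);H^1_{\frac{1}{\sigma}}(0,1))$ and $Ly\in K$, i.e. $y\in D(\mathcal{A}_0)$. Equivalently, one may set up the bilinear form attached to $\lambda I-\mathcal{A}_0$ on $L^2((0,A)\times(0,S);H^1_{\frac{1}{\sigma}}(0,1))$ and invoke Lax--Milgram, coercivity coming from the dissipativity estimate together with the $\lambda\|y\|^2_{\frac{1}{\sigma}}$ term. By Lumer--Phillips, $\mathcal{A}_0$ generates a contraction $C_0$-semigroup on $K$.

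It then remains to reinstate the renewal term. The birth operator assigns to $y$ the trace datum $y(x,0,s)=\int_0^A\beta(a)\,y(x,a,s)\,da$, and since $\beta\in C([0,A])$ is bounded by {\bf (H3)}, this is an admissible boundary perturbation of $\mathcal{A}_0$: writing it through a Dirichlet lifting at $a=0$ and the associated boundary operator places it in the Greiner/Desch--Schappacher class of boundary perturbations of a generator, which preserves generation of a $C_0$-semigroup. This is exactly the mechanism carried out in \cite{Genni}, so I would transcribe it, obtaining that $(\mathcal{A},D(\mathcal{A}))$ generates the semigroup $\mathfrak{T}=(e^{t\mathcal{A}})_{t\ge 0}$ on $K$.

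I expect the main obstacle to lie in the range condition of the second step. The degeneracy of $k$ at the endpoints rules out standard elliptic regularity, so the solvability of the resolvent equation --- together with the requirement $Ly\in L^2_{\frac{1}{\sigma}}$ up to the degenerate boundary and the control of the outgoing traces $y(\cdot,A,\cdot)$, $y(\cdot,\cdot,S)$ --- must be drawn entirely from the weighted framework of Assumption \ref{Main1}, while simultaneously respecting the coupled transport in $(a,s)$ and the nonlocal trace at $a=0$. By contrast, once this functional-analytic setting is in place the birth term, being bounded, is comparatively harmless.
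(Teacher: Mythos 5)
The paper offers no proof of this lemma at all---it simply remarks that ``a similar argument as in \cite{Genni} yields'' the result---and your outline (Lumer--Phillips for the auxiliary operator $\mathcal{A}_0$ with homogeneous birth condition, the range condition solved along the characteristics of $\partial_a+\partial_s$ using the analytic semigroup generated by the degenerate operator $L$ in $L^2_{\frac{1}{\sigma}}$, then reinstating the bounded renewal term by the boundary-perturbation mechanism of \cite{Genni}) is exactly the argument that remark points to. Your reconstruction is sound and in fact supplies the detail the paper leaves implicit; the only step you should not treat as automatic is the final one, since bounded boundary perturbations do not preserve generation in full generality and one must actually verify the admissibility (or run the equivalent renewal/fixed-point argument) as is done in \cite{Genni}.
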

	
	As such, a similar computation as in \cite{dy} shows that
	
	\begin{align*}
		D(\mathcal{A}^*):= \begin{cases}
			\varphi\in K,\quad \varphi\in L^2((0,A)\times (0,S);H^{2}_{\frac{1}{\sigma}}(0,1)),\cr
			\dfrac{\partial y}{\partial a}+\dfrac{\partial y}{\partial s}+L\varphi-\left(\mu_1(a)+\mu_2(s)\right)\varphi \in K,\cr
			\varphi(1,\cdot,\cdot)=\varphi(0,\cdot,\cdot)=0,\cr
			\varphi(\cdot,A,\cdot)=0,\quad \varphi(\cdot,S,\cdot)=0,
		\end{cases}
	\end{align*}
	
	and we have, for every $\varphi\in D(\mathcal{A}^*)$,
	
	\begin{align*}
		\mathcal{A}^*\varphi=L \varphi+\dfrac{\partial \varphi}{\partial a}+\dfrac{\partial \varphi}{\partial s}-\left(\mu_1(a)+\mu_2(s)\right)\varphi+\beta(a)\varphi(x,0,s),
	\end{align*}
	
	On the other hand, the control operator $\mathcal{B}\in \mathcal{L}(U,K)$ is given for every $u\in U$
	
	$$\mathcal{B}u=mu.$$

	With the above notation, one can see that the adjoint problem of \eqref{2} is given by:
	
	\begin{equation}
		\left\lbrace\begin{array}{ll}
			\dfrac{\partial q}{\partial t}-\dfrac{\partial q}{\partial a}-\dfrac{\partial q}{\partial s}-L q+(\mu_1(a)+\mu_2(s))q=\beta(a)q(x,0,s,t)&\hbox{ in }Q ,\\ 
			q(0,a,s,t)=q(1,a,s,t)=0&\hbox{ on }\Sigma,\\ 
			q\left( x,A,s,t\right) =0&\hbox{ in } Q_{S,T} \\
			q(x,a,S,t)=0& \hbox{ in } Q_{S,T}\\
			q\left(x,a,s,0\right)=q_0(x,a,s)&
			\hbox{ in }Q_{A,S};
		\end{array}\right.
		\label{112}
	\end{equation}
	where we recall that the operator $(L,D(L))$ is defined by 
	\begin{align*}
		Ly:=k(x)\dfrac{\partial^2 y}{\partial x^2}+b(x)\dfrac{\partial y}{\partial x},\qquad y\in D(L):=H^{2}_{\frac{1}{\sigma}}(0,1)\cap H^{1}_{\frac{1}{\sigma}}(0,1).
	\end{align*}
	Next, let us consider the operator $P$ defined by
	\begin{equation}\label{33}
		P:=-\mu_1(a)-\mu_2(s)+L, \qquad {\rm on }\qquad D(\mathcal{A}^*),
	\end{equation}
	
	and denotes by $(e^{tP})_{t\geq 0}$ its associated C$_0$-semigroup. 
	

	Thus, using the method of characteristics, we get the following result.
	
	\begin{proposition}
		Let the assumptions of Lemma \ref{generation-result} be satisfied. Then, for any initial data $q_0\in K$, the adjoint system \eqref{112} admits a unique solution $q$ given by
		\begin{align}\label{alp}
			q(t) =\begin{cases}
				q_0(.,a+t,s+t)e^{t P}+\displaystyle\int\limits_{0}^{t}e^{(t-r)P}\beta(a+t-r)q(x,0,s+t-r,r)dr,& {\rm in } \; A_1, \\
				\displaystyle\int_{\max\{t-S+s,t-A+a\}}^{t}e^{(t-r)P}\beta(a+t-r)q(x,0,s+t-r,r)dr, & {\rm in } \; A_2,
			\end{cases}
		\end{align}
		where we recall that $A_1,A_2$ are defined in \eqref{domain}, and we have, for every $t\geq 0 $,
		\[e^{tP}=\dfrac{\pi_1(a)}{\pi_1(a-t)}\dfrac{\pi_2(s)}{\pi_2(s-t)}e^{tL} \hbox{ with } \pi_1(a)=e^{-\int_{0}^{a}\mu_1(r)dr}\hbox{ and   }\pi_2(s)=e^{-\int_{0}^{s}\mu_2(r)dr}.\]
	\end{proposition}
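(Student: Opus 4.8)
The plan is to get existence and uniqueness at the abstract level first and only then make the transport structure explicit via characteristics. Since $\mathcal A$ generates a $C_0$-semigroup on the Hilbert space $K$ by Lemma \ref{generation-result}, its adjoint $\mathcal A^{*}$ generates the adjoint $C_0$-semigroup $(e^{t\mathcal A})^{*}$ on $K$. Rearranging \eqref{112} gives exactly $\partial_t q=\mathcal A^{*}q$, $q(0)=q_0$, so abstract semigroup theory already yields, for every $q_0\in K$, a unique mild solution $q(t)=e^{t\mathcal A^{*}}q_0\in C([0,T];K)$. What remains is to identify this solution with the explicit representation \eqref{alp}.

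Next I would integrate along characteristics. For fixed $(a,s)$ I set $\tilde a(\tau)=a+t-\tau$, $\tilde s(\tau)=s+t-\tau$ and $w(\tau)=q(x,\tilde a(\tau),\tilde s(\tau),\tau)$; since $\tilde a'=\tilde s'=-1$, the transport terms $-\partial_a q-\partial_s q$ are absorbed and \eqref{112} reduces to the non-autonomous abstract ODE $w'(\tau)=Lw-(\mu_1(\tilde a(\tau))+\mu_2(\tilde s(\tau)))w+\beta(\tilde a(\tau))q(x,0,\tilde s(\tau),\tau)$ in $L^2_{\frac{1}{\sigma}}(0,1)$. Because $L$ generates $(e^{tL})_{t\ge0}$ under Assumption \ref{Main1} and the mortality terms act as time-dependent scalar multipliers that commute with $e^{tL}$, Duhamel's formula produces the evolution operator $U(t,r)=e^{(t-r)L}\exp(-\int_r^t[\mu_1(\tilde a)+\mu_2(\tilde s)]\,d\tau)$. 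Evaluating the mortality integrals by the substitutions $\rho=\tilde a(\tau)$ and $\rho=\tilde s(\tau)$ turns the exponential weights, after this substitution, into the survival factors built from $\pi_1$ and $\pi_2$ that define $e^{(t-r)P}$ in \eqref{33}--\eqref{alp} (up to the evaluation convention for the $(a,s)$-arguments).

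The case split is obtained by tracing each characteristic backward in $\tau$. On $A_1$, i.e. $t<A-a$ and $t<S-s$, the characteristic reaches the slice $\tau=0$ inside $Q_{A,S}$ at the foot $(a+t,s+t)$, so $w(0)=q_0(\cdot,a+t,s+t)$ and Duhamel over $[0,t]$ yields the first line of \eqref{alp}. On $A_2$ the characteristic leaves the age--size cylinder through $a=A$ or $s=S$, whichever is reached last, namely at time $r=\max\{t-A+a,\,t-S+s\}$, where the boundary conditions $q(x,A,s,t)=q(x,a,S,t)=0$ annihilate the boundary term; Duhamel from that exit time to $t$ then gives the second line. Checking that the $q$ so defined satisfies \eqref{112} with the prescribed initial and boundary traces, and that it coincides with $e^{t\mathcal A^{*}}q_0$, closes the argument through the uniqueness already in hand.

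The main obstacle is the renewal coupling: the source $\beta(\tilde a)q(x,0,\tilde s,\tau)$ involves the interior age-trace $q(x,0,\cdot,\cdot)$ of the unknown itself, so \eqref{alp} is implicit and must be read as a Volterra (renewal) equation for $g(x,s,t):=q(x,0,s,t)$. Setting $a=0$ in \eqref{alp} and invoking {\bf (H4)}, so that $\beta(t-r)=0$ unless $t-r\ge\hat a$, exhibits a genuine delay $\hat a$ in this equation; it can therefore be solved uniquely by stepping forward on successive intervals of length $\hat a$ (equivalently, by a short-time contraction using the boundedness of $\beta$ and the $L^{\infty}$-type estimate on $e^{tL}$), consistently with the abstract uniqueness. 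The remaining technical care is to justify the age-trace $q(x,0,s,\cdot)$ and the differentiation along characteristics in the degenerate weighted setting, where $e^{tL}$ and the spaces $H^2_{\frac{1}{\sigma}}(0,1)$ from Assumption \ref{Main1} must be used to give these computations a rigorous meaning.
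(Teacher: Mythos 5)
Your proposal is correct and follows essentially the same route as the paper: existence and uniqueness from the generation result for $\mathcal{A}$ (hence $\mathcal{A}^{*}$), then the characteristic change of variables $w(\lambda)=q(x,a+t-\lambda,s+t-\lambda,\lambda)$ and Duhamel's formula, with the case split according to whether the backward characteristic reaches $t=0$ (giving the $A_1$ line) or exits through $a=A$ or $s=S$ where the homogeneous boundary conditions kill the initial term (giving the $A_2$ line with the $\max$). Your additional observations — that the mortality multipliers commute with $e^{tL}$ so the survival factors $\pi_1,\pi_2$ emerge by substitution, and that \eqref{alp} is an implicit Volterra renewal equation in the trace $q(x,0,\cdot,\cdot)$ resolvable thanks to the delay from {\bf (H4)} — are refinements of points the paper treats informally, not a different method.
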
	
	
	\begin{proof}	
		The proof of the existence and uniqueness of the mild solution of \eqref{112} follows from Lemma \ref{generation-result}.
		Let us consider the following function 
		$$w(x,\lambda)=q(x,a+t-\lambda,s+t-\lambda,\lambda).$$ 
		Then, $w$ satisfies 
		\begin{align}
			\left\lbrace\begin{array}{l}
				w'(x,\lambda)=(-\mu_1(a+t-\lambda)-\mu_2(s+t-\lambda)+L) w(\lambda,x)+ f(x,\lambda),\\
				w(1,\lambda)=w(0,\lambda)=0,\\
				w(x,0)=q(x,a+t,s+t,0),
			\end{array}\right.
		\end{align}
		
		where $f(\lambda,x)=\beta(a+t-\lambda)q(x,0,s+t-\lambda,\lambda)$. The solution of the homogeneous equation is given by
		\[w_H(x,\lambda)=Ce^{\lambda P}.\]
		Notice that $$q(x,a,s,t)=w(x,t).$$
		
		\textbf{Taking into account the initial condition}
		
		Here we consider the domain $A_1$. The Duhamel formula yields that
		\[w(x,t)=e^{tP}w(x,0)+\int_{0}^{t}e^{(t-r)P}\beta(a+t-r)q(x,0,s+t-r,r)dr.\]
		Moreover, since
		\[w(x,0)=q(x,a+t,s+t,0)=q_0(x,a+t,s+t),\]
		then
		$$q(x,a,s,t)=q_{0}(x,a+t,s+t)e^{t P}+\int\limits_{0}^{t}e^{(t-r)P}\beta(a+t-r)q(x,0,s+t-r,r)dr, \quad {\rm in }\quad A_1.$$	
		
		\textbf{Taking into account the boundary condition in $a$}
		
		For the boundary condition in $\{a=A\}$ we use the set
		\[A'_1:=\{(a,s,t)\in Q' \text{ such that } S-s>t>A-a> 0\text{ or } t>S-s>A-a> 0\}.\]
		Then using the Duhalmel formula (boundary condition in age is $q(x,A,s,t)$) we obtain 
		\begin{equation}\label{souff}
			q(x,a,s,t)=e^{(A-a)P}w(x,t-(A-a))+ \int_{t-A+a}^{t}e^{(t-r)P}\beta(a+t-r)q(x,0,s+t-r,r)dr. 
		\end{equation}
		Since $$w(x,t-(A-a))=q(x,A,s-a+A,a+t-A)=0,$$ then 
		$$ q(x,a,s,t)=\int\limits_{t-A+a}^{t}e^{(t-r)P}\beta(a+t-r)q(x,0,s+t-r,r)dr .$$	
		
		\textbf{Taking into account of the boundary condition in $s.$}
		
		For the boundary condition in $\{s=S\}$ we use the set 
		\[A'_2:=\{(a,s,t)\in Q' \text{ such that } A-a>t>S-s> 0\text{ or } t>A-a>S-s> 0\}.\] Then
		using the Duhamel formula, we obtain:
		$$q(x,a,s,t)=e^{(S-s)r}w(t-(S-s),x)+\int\limits_{t+s-S}^{t}e^{(t-r)P}\beta(a+t-r)q(x,0,s+t-r,r)dr .$$
		It follows from $w(t+s-S,x)=q(x,a,S,t)=0$ that 
		\begin{equation}\label{souf}
			q(x,a,s,t)=\int_{t+s-S}^{t}e^{(t-r)P}\beta(a+t-r)q(x,0,s+t-r,r)dr. 
		\end{equation} 
	\end{proof}
	Since the operator $\mathcal{A}$ generates a C$_0$-semigroup on $K$ (see Lemma \ref{generation-result}) and that $\mathcal{B}$ is a bounded operator. It also follows that the abstract system \eqref{Abstract} is well-posed in the sense that: for every $y_0\in K$ and every $u\in L^2(0,+\infty;U)$, there exists a unique solution $y\in C(0,+\infty;K)$ to \eqref{Abstract} given by the Duhamel formula (see e.g. \cite[Proposition 4.2.5]{b8})
	\begin{align}
		y(T)=e^{T\mathcal{A}}y_0+\Phi_Tu,\qquad T\geq 0, 
	\end{align}
	where $\Phi_T$ is the so-called input map of $(\mathcal{A},\mathcal{B})$, that is the linear operator defined for every $u\in L^2(0,+\infty;U)$ by 
	\begin{align*}
		\Phi_Tu=\int_{0}^{T} e^{(T-s)\mathcal{A}}\mathcal{B}u(s)ds.
	\end{align*}
	We recall that $(\mathcal{A},\mathcal{B})$ (and hence \eqref{2}) is null controllable in time $T>0$ if ${\rm Ran}\, e^{T\mathcal{A}}\subset {\rm Ran}\,\Phi_T$ (see, e.g. \cite[Definition 11.1.1]{b8}). It is also well-known that the null controllability of the pair $(\mathcal{A},\mathcal{B})$ is equivalent to the final state observability of the pair $(\mathcal{A}^*,\mathcal{B}^*)$. More precisely (see e.g. \cite[Theorem 11.2.1]{b8}), $(\mathcal{A},\mathcal{B})$ is null controllable in time $T$ if, and only if, there exists $C_T>0$ such that
	\begin{equation}
		\displaystyle\int\limits_{0}^{T}\|\mathcal{B}^*e^{t\mathcal{A}^*}q_{0}\|^2\geq C_{T}^{2}\|e^{T\mathcal{A}^*}q_0\|^2, \qquad \forall\, q_{0}\in D(\mathcal{A}^*),
	\end{equation}
	where $\mathfrak{T}^{*}:=(e^{t\mathcal{A}^*})_{t\geq 0}$ is the adjoint semigroup of $\mathfrak{T}$ generated by $\mathcal{A}^{*}$. 
	\subsection{Proof of the main result}
	In this part we provide the proof of the main result of this paper, namely Theorem \ref{Main2}. Indeed, in view of \cite[Theorem 11.2.1]{b8} we will mainly perform computations on the adjoint system \eqref{112} in the sequel. Then, it is then more convenient to restate the result of Theorem \ref{Main2} as follows:
	
	\begin{theorem}\label{Main3}
		Let the assumptions of Theorem \ref{Main2} be satisfied. Then, for every $q_0\in D(\mathcal{A}^*)$, the pair $(\mathcal{A}^{*},\mathcal{B}^{*})$ is final-state observable in any time $T>A-a_2+S-s_2+a_1+s_1$. In particular, for every $T>A-a_2+S-s_2+a_1+s_1$ there exist $C_T>0$ such that the solution $q$ of \eqref{112} satisfies 
		\begin{equation}
			\int\limits_{0}^{S}\int\limits_{0}^{A}\int_{0}^1\dfrac{1}{\sigma(x)}q^2(x,a,s,T)dxdads\leq C_T\displaystyle\int\limits_{0}^{T}\int\limits_{s_1}^{s_2}\int\limits_{a_1}^{a_2}\int_{\omega}\dfrac{1}{\sigma(x)}q^2(x,a,s,t)dxdadsdt.
		\end{equation}
	\end{theorem}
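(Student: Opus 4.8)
The plan is to establish the observability inequality directly from the representation formula \eqref{alp}, transferring the known final-state observability of the degenerate parabolic semigroup in the space variable along the characteristics of the transport part in $(a,s)$. Since the pair $(\mathcal{A}^*,\mathcal{B}^*)$ is final-state observable precisely when $\|q(T)\|_{1/\sigma}$ is dominated by the observation of $q$ on $Q_1$ over $(0,T)$, and since $q(T)=e^{T\mathcal{A}^*}q_0$ is given explicitly by \eqref{alp}, the whole argument reduces to estimating the two building blocks appearing there: the direct initial-data term $q_0(\cdot,a+T,s+T)e^{TP}$ (present only on $A_1$) and the renewal integral involving the birth trace $q(x,0,\cdot,\cdot)$ (present on both $A_1$ and $A_2$).

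First I would record the parabolic ingredient. Under Assumption \ref{Main2}, the degenerate heat semigroup $e^{tL}$ enjoys null controllability from $\omega$, equivalently the final-state observability estimate $\int_0^1 \tfrac{1}{\sigma}(e^{\tau L}z)^2\,dx \le C(\tau)\int_0^\tau\!\int_\omega \tfrac{1}{\sigma}(e^{rL}z)^2\,dx\,dr$ for every $\tau>0$ (this is the content of \cite{canard1,12}). Because $e^{tP}=\tfrac{\pi_1(a)}{\pi_1(a-t)}\tfrac{\pi_2(s)}{\pi_2(s-t)}e^{tL}$ differs from $e^{tL}$ only by the explicit mortality factors, which are bounded above and below by positive constants as long as the age and size arguments stay in a compact subset of $[0,A)\times[0,S)$, the semigroup $e^{tP}$ inherits an analogous observability estimate on any such range, with a constant depending on the range. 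This takes care of the $x$-direction once a positive observation time is available along each characteristic.

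Next comes the geometric heart of the proof. Fix a final point $(a,s)$ and follow its characteristic backward: at time $t<T$ the age is $a+T-t$ and the size is $s+T-t$, both increasing as $t$ decreases, so the curve meets the observation slab $\{a_1<\cdot<a_2\}\times\{s_1<\cdot<s_2\}$ over the time window $t\in\big(\max\{a+T-a_2,\,s+T-s_2\},\ \min\{a+T-a_1,\,s+T-s_1\}\big)$. The role of the assumption $T>A-a_2+S-s_2+a_1+s_1$, together with $a_1<\hat a$ and $T_0<\min\{a_2-a_1,\hat a-a_1\}$, is exactly to guarantee that for almost every final point this window is non-empty, of length bounded below by a positive constant, and contained in $(0,T)$; this is where the explicit threshold on $T$ is consumed, and where the regions $A_1,A'_1,A'_2$ of \eqref{domain} must be treated separately according to whether the characteristic first exits through $\{a=A\}$ or $\{s=S\}$. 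On that window I would apply the parabolic observability of the previous step to bound the semigroup evolution along each characteristic by its observation on $\omega$, and then integrate the resulting pointwise-in-$(a,s)$ inequalities against $\tfrac{1}{\sigma}\,dx\,da\,ds$, using the change of variables induced by the characteristics (whose Jacobian is one) to match the left-hand side $\|q(T)\|_{1/\sigma}^2$.

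The main obstacle is closing the estimate for the renewal term, i.e.\ controlling the birth trace $q(x,0,\cdot,\cdot)$ by the observation. This trace enters the representation of $q$ at every $(a,s,t)$ through $\int e^{(t-r)P}\beta(a+t-r)q(x,0,s+t-r,r)\,dr$, so the quantity one wishes to bound reappears on both sides and cannot be settled by a single application of the parabolic estimate; it must instead be closed by a Volterra/fixed-point argument. Here the demographic hypotheses are decisive: by (H4) the kernel $\beta$ is supported in $[\hat a,A]$, so a newborn does not re-enter the birth term until its age argument reaches $\hat a$, which introduces a genuine time delay of length at least $\hat a$. The hypotheses $a_1<\hat a$ and $T_0<\min\{a_2-a_1,\hat a-a_1\}$ then guarantee that each newborn's characteristic is observed inside the pre-fertility portion $(a_1,\hat a)$ of the observation slab before it can feed a new birth. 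I expect the estimate to close by decomposing $(0,T)$ into finitely many subintervals of length comparable to $\hat a$ and iterating the one-step inequality, the number of steps — and hence the finiteness of the constant $C_T$ — being controlled by this delay, while the boundedness of the explicit mortality factors on the relevant compact age and size ranges (a consequence of (H1)--(H2)) keeps every constant in the iteration finite.
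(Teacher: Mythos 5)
Your parabolic ingredient and the geometric analysis of the direct initial-data term are sound and close to what the paper does (one caveat: the observation is of $q$, not of your two building blocks separately, so estimating the initial-data block requires a triangle inequality and hence a prior bound on the renewal block over the observation set; this is fixable). The genuine gap is in your treatment of the self-declared ``main obstacle'', the birth trace $q(x,0,\cdot,\cdot)$, and it is a gap in both directions. First, your claim that the trace estimate ``cannot be settled by a single application of the parabolic estimate'' and must be closed by a Volterra/fixed-point iteration is false: under {\bf (H4)} together with $a_1<\hat a$ and $T_0<\min\{a_2-a_1,\hat a-a_1\}$, the backward characteristic issued from a birth point $(0,s,t)$ with $t>T_1$ stays in the strip $\{a<\hat a\}$, where $\beta\equiv 0$ makes equation \eqref{112} \emph{homogeneous}; after removing the mortality factors, the function $\lambda\mapsto \tilde q(x,t-\lambda,s+t-\lambda,\lambda)$ is a pure degenerate-parabolic evolution on the whole portion of the characteristic that carries the observation (ages in $(a_1,a_1+\delta)\subset(0,\hat a)$), so \emph{one} application of Proposition \ref{observability-inequality} per characteristic closes the trace estimate with no recursion --- this is exactly Proposition \ref{Prop1}. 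Second, and more seriously, the iteration you propose over subintervals of $(0,T)$ of length $\sim\hat a$ cannot close: its base case would require bounding the trace at times $t<T_1=\max\{s_1,a_1+S-s_2\}$, and those traces are genuinely unobservable, since the backward characteristics from $(0,s,t)$ with $t<T_1$ reach $t=0$ (carrying arbitrary initial data) or enter the fertile region $a>\hat a$ before ever meeting $Q_1$ --- this is precisely the obstruction illustrated in the paper's Figure 5. Your plan, which works from the representation formula \eqref{alp} on all of $(0,T)$ with the true initial data, unavoidably injects these early-time traces into the renewal integral (e.g.\ for final points with $a$ slightly above $A-T$, the integral in \eqref{alp} starts at times near $0$), so no amount of iteration recovers them.

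What is missing is the structural device the paper uses to quarantine the unobservable early times: fix $\eta\in(T_1,T)$ and split $q=v_1+v_2$ on $(\eta,T)$, where $v_2$ solves \eqref{3221} with \emph{zero} data at $t=\eta$ and source $\beta(a)q(x,0,s,t)$, so that only traces at $t>\eta>T_1$ (the estimable ones) enter, and are absorbed through the elementary energy bound of Proposition \ref{Propo1}; while $v_1$ solves the renewal-free system \eqref{3220} with data $q(\cdot,\eta)$, and its final state is estimated by the observability of the cascade system (Propositions \ref{p2} and \ref{1p}, imported from the age- or size-only theory) together with the vanishing of $v_1(\cdot,T)$ at large ages and sizes, because the corresponding characteristics exit through $\{a=A\}$ or $\{s=S\}$ before reaching back to $t=\eta$. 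It is in this last step --- not in a lower bound on observation-window lengths --- that the threshold $T>A-a_2+S-s_2+a_1+s_1$ is actually consumed: one needs $T-\eta$ large enough, with $\eta>T_1$, for the homogeneous part to be either observed or extinguished. Without this decomposition (or an equivalent substitute), your argument cannot be completed as outlined.
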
	
	
	For the proof of the above theorem we proceed as in \cite{abst}. The idea of the proof is based on the estimation of $\beta(a)q(x,0,s,t)$. Let us recall that $$T_0=\max\{s_1, S-s_2\} \hbox{ and } T_1=\max\{s_1,a_1+S-s_2\}.$$
	
	Hence, we have the following proposition. 
	
	\begin{proposition}\label{Prop1}
		Let the assumptions of Theorem \ref{Main2} be satisfied and let $a_1<\hat{a}$, $T_0<\min\{\hat{a}-a_1,a_2-a_1\}$ and $T_1<\eta<T$. Then there exists a constant $C>0$ such that for every $q_{0}\in K$, the solution $q$ of \eqref{112} satisfies the following inequality:
		\begin{equation}\label{Bq}
			\int\limits_{\eta}^{T}\int\limits_{0}^{S}\int_{0}^1\dfrac{1}{\sigma(x)}q^2(x,0,s,t)dxdsdt\leq C\displaystyle\int\limits_{0}^{T}\int\limits_{s_1}^{s_2} \int\limits_{a_1}^{a_2}\int\limits_{l_1}^{l_2}\dfrac{1}{\sigma(x)}q^2(x,a,s,t)dxdadsdt .
		\end{equation}
		More explicitly, 
		\begin{enumerate}
			\item If $s_1<a_1<s_2$, then for every $\eta_2$, with $a_1<\eta_1<T$ and for every $\delta>0$ such that $0<s_2-a_1-\delta$, there exists a constant $C_1>0$ such that for every $q_{0}\in K,$ the solution $q$ of \eqref{112} satisfies the following inequality:
			\begin{equation}\label{Bq12}
				\int\limits_{\eta_1}^{T}\int\limits_{0}^{s_2-a_1-\delta}\int_{0}^1\dfrac{1}{\sigma(x)}q^2(x,0,s,t)dxdsdt\leq C_3\displaystyle\int\limits_{0}^{T}\int\limits_{s_1}^{s_2} \int\limits_{a_1}^{a_2}\int\limits_{l_1}^{l_2}\dfrac{1}{\sigma(x)}q^2(x,a,s,t)dxdadsdt ;
			\end{equation} 
			and, $$q(x,0,s,t)=0\hbox{ a.e. in }(0,1)\times (s_2-a_1,S)\times (S-s_2+a_1,T);$$ 
			\item If $s_1>a_1$ and for every $\eta_3\hbox{ and }\eta_4$ such that $$a_1<\eta_3<T\hbox{, }s_1-a_0<\eta_4<T\hbox{ with }a_0\in [0,s_1-a_1)$$ and for every $\delta>0$ such that $0<s_2-a_1-\delta$ there exists $C_2>0\hbox{ and }C_3>0$ such that for every $q_{0}\in K,$ the solution $q$ of \eqref{112} verifies the following inequality:
			\begin{equation}
				\displaystyle\int\limits_{\eta_3}^{T}\int\limits_{s_1-a_1}^{s_2-a_1-\delta}\int_{0}^1\dfrac{1}{\sigma(x)}q^2(x,0,s,t)dxdsdt\leq C_2\displaystyle\int\limits_{0}^{T}\int\limits_{s_1}^{s_2} \int\limits_{a_1}^{a_2}\int\limits_{l_1}^{l_2}\dfrac{1}{\sigma(x)}q^2(x,a,s,t)dxdadsdt \label{Bq1};
			\end{equation} 
			and
			\begin{equation}
				\displaystyle\int\limits_{\eta_4}^{T}\int\limits_{a_0}^{s_1-a_1}\int_{0}^1\dfrac{1}{\sigma(x)}q^2(x,0,s,t)dxdsdt\leq C_3\displaystyle\int\limits_{0}^{T}\int\limits_{s_1}^{s_2} \int\limits_{a_1}^{a_2}\int\limits_{l_1}^{l_2}\dfrac{1}{\sigma(x)}q^2(x,a,s,t)dxdadsdt \label{Bq1};
			\end{equation} 
			\item else, if $s_2\leq a_1$, then $$q(x,0,s,t)=0\hbox{ a.e. in }(0,1)\times (0,S)\times (S-s_2+a_1,T);$$
		\end{enumerate}
	\end{proposition}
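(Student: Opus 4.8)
The plan is to bound the renewal trace $q(x,0,s,t)$ pointwise in $(s,t)$ by the observation on the control region and then integrate. The starting point is the characteristic representation \eqref{alp}, but applied along \emph{short} characteristic segments rather than all the way back to $t=0$. Fix $(s,t)$ with $t\in(\eta,T)$ and consider the characteristic through age $0$, size $s$, time $t$; parametrising it backward in time by the age $\rho\ge 0$, it visits the points $(x,\rho,s+\rho,t-\rho)$. The decisive structural fact is hypothesis {\bf (H4)} together with $a_1<\hat a$: choosing $a'\in(a_1,\min\{a_2,\hat a\})$, every age $\rho\in(0,a')$ lies below $\hat a$, so the source $\beta(\rho)q(x,0,s+\rho,t-\rho)$ vanishes identically along this segment. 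Consequently, with $w(\lambda)=q(\cdot,t-\lambda,s+t-\lambda,\lambda)$ as in the proof of \eqref{alp}, on $\lambda\in(t-a',t)$ the function $w$ solves the \emph{homogeneous} equation $w'(\lambda)=Lw(\lambda)-(\mu_1+\mu_2)w(\lambda)$, with no renewal contribution.

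Absorbing the mortality into a scalar factor built from $\pi_1,\pi_2$, write $w=\theta\,v$ so that $v$ solves the purely diffusive degenerate equation $v'=Lv$. Here Assumption \ref{Main2} enters: under it the degenerate heat equation is null controllable from $\omega$, equivalently the semigroup $e^{tL}$ is final-state observable from $\omega$ in $L^2_{1/\sigma}(0,1)$ (cf. \cite{canard1}); since $(Lu,v)_{1/\sigma}=-\int_0^1\gamma u_xv_x\,dx$ is symmetric, the relevant observable semigroup is precisely $e^{tL}$. Applying this observability on the sub-interval where the age lies in $(a_1,a')\subset(a_1,a_2)$, and using the contraction $e^{a_1L}$ to cover the remaining ages in $(0,a_1)$, yields for each admissible $(s,t)$ the pointwise estimate
\begin{equation*}
\int_0^1\frac{1}{\sigma(x)}q^2(x,0,s,t)\,dx\le C\int_{a_1}^{a'}\int_{l_1}^{l_2}\frac{1}{\sigma(x)}q^2(x,\rho,s+\rho,t-\rho)\,dx\,d\rho,
\end{equation*}
the factors $\pi_1,\pi_2\le 1$ being absorbed into $C$.

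It then remains to integrate in $(s,t)$ and perform the shear change of variables $(\rho,s,t)\mapsto(\rho,\,s+\rho,\,t-\rho)$, whose Jacobian is $1$, so that the right-hand side is dominated by the full control-region integral and \eqref{Bq} follows. The admissibility constraints — that $(x,\rho,s+\rho,t-\rho)$ genuinely lands in $\omega\times(a_1,a_2)\times(s_1,s_2)\times(0,T)$, i.e.\ $s+\rho\in(s_1,s_2)$ for $\rho\in(a_1,a')$ and $t-\rho>0$ — are exactly what dictate the thresholds $T_0=\max\{s_1,S-s_2\}$, $T_1=\max\{s_1,a_1+S-s_2\}$ and the hypotheses $T_0<\min\{\hat a-a_1,a_2-a_1\}$, $T_1<\eta$. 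The three cases correspond to the sub-ranges of sizes $s$ that can be routed through $(s_1,s_2)$; for the $(s,t)$ admitting no such characteristic one reads off from \eqref{alp} that $q(x,0,s,t)$ reduces to an integral of $\beta(\rho)q(x,0,\cdot,\cdot)$ over an empty set of fertile ages (the backward characteristic meets $\{a=A\}$ or $\{s=S\}$, where $q=0$, before any fertile age), forcing $q(x,0,s,t)=0$ — this gives the null-region assertions.

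The step I expect to be the main obstacle is reconciling the transport and diffusion directions: the diffusion observability is a space–time integrated estimate that must be applied along \emph{slanted} characteristic segments, and one must simultaneously guarantee that each segment avoids all fertile ages (so the renewal source drops out), keeps its observed ages inside $(a_1,a_2)$ and its sizes inside $(s_1,s_2)$ throughout, and stays within the horizon $t>0$. Arranging these at once is precisely what forces the exact thresholds $T_0,T_1$ and the case split, and the bookkeeping for the null regions — certifying that the excluded characteristics reach a boundary where $q$ vanishes before any fertile contribution can build up — is the most delicate part of the argument.
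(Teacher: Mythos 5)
Your proposal follows essentially the same route as the paper: use {\bf (H4)} to kill the renewal source along backward characteristics below age $\hat a$, remove mortality via the $\pi_1,\pi_2$ factors, apply the degenerate-parabolic final-state observability of $e^{tL}$ from $\omega$ (the paper's Proposition~\ref{observability-inequality}, after \cite{canard,canard1}) along the slanted segments where the characteristic crosses $Q_1$, integrate with the unit-Jacobian shear, and read off the null regions from the representation \eqref{alp} since only non-fertile ages contribute there. The only difference is bookkeeping: where you keep a generic window $(a_1,a')$ and an admissibility condition, the paper instantiates two explicit windows — ages in $(a_1,a_1+\delta)$ for sizes $s\in(\max\{0,s_1-a_1\},s_2-a_1-\delta)$ and ages in $(s_1,s_1+\kappa)$ for the small sizes $s\in(a_0,s_1-a_1)$ arising when $s_1>a_1$ — which is exactly the case split you flagged as the delicate part.
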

	
	To prove Proposition \ref{Prop1}, we recall the following observability inequality for parabolic equation (see, for instance, \cite{canard,canard1}):
	
	\begin{proposition}\label{observability-inequality}
		Let the Assumptions \ref{Main1}--\ref{Main2} be satisfied. Let $T>0$ and let $t_0,t_1>0$ such that $0<t_0<t_1<T$. Then, for every $w_0\in L^{2}_{\frac{1}{\sigma}}(0,1),$ the solution $w$ of the following initial boundary value problem 
		\begin{equation}
			\begin{cases}
				\dfrac{\partial w(x,\lambda)}{\partial \lambda}-L w(x,\lambda)=0, & \text{ in } (0,1)\times(t_0,T),\\
				w(0,\lambda)=w(1,\lambda)=0, & \text{ on } (t_0,T)	\\
				w(x,t_0)=w_0(x), &\text{ in }(0,1),
			\end{cases}
		\end{equation}
		satisfy 
		\[\displaystyle\int_{0}^1\dfrac{1}{\sigma(x)}w^2(T,x)dx\leq\int_{0}^1\dfrac{1}{\sigma(x)}w^2(x,t_1)dx\leq c_1e^{\dfrac{c_2}{t_1-t_0}}\int\limits_{t_0}^{t_1}\int_{\omega}\dfrac{1}{\sigma(x)}w^2(x,\lambda)dxd\lambda,\]
		where the constant $c_1$ and $c_2$ depend on $T$ and $(0,1)$ with $\omega\subset (0,1)$.
	\end{proposition}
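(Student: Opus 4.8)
The plan is to prove the two inequalities separately, the left one being elementary and the right one being the substance. For the left inequality I would exploit the dissipativity of $L$ in the weighted space $L^{2}_{\frac{1}{\sigma}}(0,1)$. Testing the equation $w_\lambda=Lw$ against $w$ in the inner product $(\cdot,\cdot)_{\frac{1}{\sigma}}$ and using the identity $(Lw,w)_{\frac{1}{\sigma}}=-\int_0^1\gamma w_x^2\,dx$ recalled in the excerpt, one obtains
\[\frac{1}{2}\frac{d}{d\lambda}\|w(\cdot,\lambda)\|_{\frac{1}{\sigma}}^2=-\int_0^1\gamma(x)\,w_x^2(x,\lambda)\,dx\le 0,\]
so $\lambda\mapsto\|w(\cdot,\lambda)\|_{\frac{1}{\sigma}}^2$ is non-increasing on $(t_0,T)$. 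Since $t_1<T$ and $\gamma>0$, this yields at once $\int_0^1\frac{1}{\sigma}w^2(\cdot,T)\,dx\le\int_0^1\frac{1}{\sigma}w^2(\cdot,t_1)\,dx$, the left inequality.

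For the right inequality I would invoke a global Carleman estimate for the degenerate operator $\partial_\lambda-L$ of the type established in \cite{canard,canard1}. The construction rests on a product weight $\varphi(x,\lambda)=\theta(\lambda)\psi(x)$, where $\theta(\lambda)=[(\lambda-t_0)(t_1-\lambda)]^{-1}$ (or a suitable power) blows up at both ends of $(t_0,t_1)$, and $\psi$ is a spatial weight tailored to the degeneracy of $k$ at $x=0$ and/or $x=1$, built from a primitive of $1/k$. Assumptions \ref{Main1}--\ref{Main2} are precisely the structural hypotheses under which this estimate holds: the conditions $xk'\le M_1k$ and $(x-1)k'\le M_2k$ control the boundary terms produced by the integrations by parts at the degenerate endpoints, while the bounds in Assumption \ref{Main2} on the second derivatives of $x(b-k_x)/k$ guarantee that the lower-order commutator contributions can be absorbed into the dominant term. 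Away from the endpoints the operator is uniformly parabolic, so there the classical Fursikov--Imanuvilov machinery applies verbatim, and all the difficulty is concentrated near $x=0,1$.

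From the Carleman estimate the observability inequality follows by the standard route: the weighted global bound controls $\int_{t_0}^{t_1}\!\int_0^1\theta\,\frac{1}{\sigma}w^2$ by the localized observation term $\int_{t_0}^{t_1}\!\int_\omega\frac{1}{\sigma}w^2$, and combining this with the dissipation estimate of the first step allows one to replace the time-integrated left-hand side by the single slice at $t_1$. The exponential factor $e^{\frac{c_2}{t_1-t_0}}$ is the usual signature of such estimates and originates from the time weight $\theta$, whose minimum over a central subinterval of $(t_0,t_1)$ scales like a negative power of $t_1-t_0$. I expect the main obstacle to be the degenerate Carleman estimate itself: the weighted Hardy-type inequalities and the boundary traces at $x=0,1$ must be handled in $L^{2}_{\frac{1}{\sigma}}$ and require the full force of Assumptions \ref{Main1}--\ref{Main2}. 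Since this is exactly the content of \cite{canard,canard1}, in the write-up I would cite it as a black box and devote the argument to the dissipation step and to the bookkeeping that converts the time-integrated Carleman bound into the stated single-slice inequality.
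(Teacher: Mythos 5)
Your proposal is correct and matches what the paper itself does: the paper offers no proof of this proposition at all, simply recalling it as a known result from \cite{canard,canard1}, which is exactly the black box your argument rests on for the substantive right-hand inequality. Your additional elementary dissipation argument for the left inequality, via $(Lw,w)_{\frac{1}{\sigma}}=-\int_0^1\gamma w_x^2\,dx\le 0$, is sound and consistent with the identities stated in the paper.
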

	\begin{proof}[Proof of the Proposition \ref{Prop1}]
		As for every $a\in (0,\hat{a})$ we have $\beta(a)=0$ (see the assumption {\bf (H4)}), then the system \eqref{112} is given by
		\begin{equation}\label{ad11}
			\begin{cases}
				\dfrac{\partial q}{\partial t}-\dfrac{\partial q}{\partial a}-
				\dfrac{\partial q}{\partial s}-L q+(\mu_1(a)+\mu_2(s)) q=0,& \text{ in }(0,1)\times(0,\hat{a})\times(0,S)\times (0,T),\\ 
				q(x,a,s,0)=q_0(x,a,s), & \text{ in }(0,1)\times(0,\hat{a})\times(0,S).
			\end{cases}
		\end{equation}	
		If we set
		\[\tilde{q}(x,a,s,t):=q(x,a,s,t)e^{-\int\limits_{0}^{a}\mu_1(r)dr}e^{-\int\limits_{0}^{s}\mu_2(r)dr},\] 
		then $\tilde{q}$ satisfies
		\begin{align} \label{ad1}
			\dfrac{\partial \tilde{q}}{\partial t}-\dfrac{\partial \tilde{q}}{\partial a}-\dfrac{\partial \tilde{q}}{\partial s}-\Delta\tilde{q}=0\text{ in }(0,1)\times(0,\hat{a})\times(0,S)\times (0,T).
		\end{align}
		Now, let $s_*\in (0,S)$  such that $q(x,0,s,t)=0\hbox{ in }(0,1)\times(s_*,S)\times(\eta,T)$. One way to establish the inequality \eqref{Bq} is to show that there exits a constant $C>0$ such that the solution $\tilde{q}$ of \eqref{ad11} satisfies 
		
		\begin{equation}\label{cci}
			\int\limits_{\eta}^{T}\int\limits_{0}^{s_*}\int\limits_{0}^1\dfrac{1}{\sigma(x)}q^2(x,0,s,t)dxdsdt\leq C\int\limits_{0}^{T}\int\limits_{s_1}^{s_2}\int\limits_{a_1}^{a_2}\int_{0}^1\dfrac{1}{\sigma(x)}\tilde{q}^2(x,a,s,t)dxdadsdt. 
		\end{equation}
		Indeed, we have 
		\[	\int\limits_{\eta}^{T}\int\limits_{0}^{s_*}\int_{\Omega}\dfrac{1}{\sigma(x)}q^2(x,0,s,t)dxdsdt\leq e^{2\int\limits_{0}^{\hat{a}}\mu_1(r)dr+2\int\limits_{0}^{s_*}\mu_2(r)dr}\int\limits_{\eta}^{T}\int\limits_{0}^{S}\int_{\Omega}\dfrac{1}{\sigma(x)}\tilde{q}^2(x,0,s,t)dxdsdt\]
		and
		\[\int\limits_{0}^{T}\int\limits_{s_1}^{s_2}\int\limits_{a_1}^{a_2}\int_{\omega}\dfrac{1}{\sigma(x)}\tilde{q}^2(x,a,s,t)dxdadsdt\leq C(e^{2\|\mu_1\|_{L^1(0,\hat{a})}+2\|\mu_2\|_{L^1(0,S^*)}})\int\limits_{0}^{T}\int\limits_{s_1}^{s_2}\int\limits_{a_1}^{a_2}\int_{\omega}\dfrac{1}{\sigma(x)}q^2(x,a,s,t)dxdadsdt.
		\]
		As we want to estimate $q(x,0,s,t)$, we will consider the trajectory $\gamma(\lambda)=(t-\lambda,t+s-\lambda,\lambda)$ (i.e., the backward characteristics starting from $(0,s,t)$). If $T<T_1$, we can not observe all the characteristics starting from $(0,s,t)$ (see Figure 5). 
		
		Next, let $T>T_1$ and without loss the generality we assume that $\eta< T\leq \min\{a_2,\hat{a}\}$. So, to prove our claims we will proceed in two steps:
		
		\underline{{\bf Step 1 : We show that:}}
		
		$$q(\cdot,0,s,t)=0, \qquad \forall\; (s,t)\in (s_2-a_1,S)\times (a_1+S-s_2,T).$$
		First, let $s_2>a_1$. So, for every $(s,t)\in (s_2-a_1,S)\times (a_1+S-s_2,T)$ and $T_0<\min\{a_2,\hat{a}\}-a_1$, we have $$ t-A<t-S+s\hbox{ and }S-s<S-s_2+a_1<t,$$ and hence,
		\[q(x,0,s,t)=\int\limits_{t-S+s}^{t}e^{(t-l)P}\beta(t-l)q(x,0,s+t-l,l)dl.\]
		Since $0<t-l<S-s$ and $s\in (s_2-a_1,S)$, we get that $$0<t-l<S-s<S-s_2+a_1<\min\{a_2,\hat{a}\}.$$ 
		The fact that $\beta\equiv 0$ on $(0,\min\{a_2,\hat{a}\})$  yields $q(x,0,s,t)=0$. Also, it is to be noted that $$\forall\,\delta>0\hbox{ such that }s_2-a_1-\delta>0\hbox{ and }a_1+S-s_2+\delta>0$$ we have 
		$$q(\cdot,0,s,t)=0, \qquad \forall\; (s,t)\in (s_2-a_1-\delta,S)\times (a_1+S-s_2+\delta,T).$$
		In particular, if $T>S-s_2$, we have 
		$$q(\cdot,0,s,t)=0,\qquad \forall\, (s,t)\in (S-s_2,S)\times(S-s_2, T).$$
		Likewise, if $s_2<a_1,$ we have
		$$q(\cdot,0,s,t)=0,\qquad \forall\, (s,t )\in (0,S)\times (S-s_2+a_1, T).$$
		
		\underline{\textbf{Step 2: Estimation of :}} 
		
		$$q(x,0,s,t)\quad\hbox{ for }\quad(s,t)\in (0,s_2-a_1-\delta)\times (\eta,T)\quad\text{ with } \quad T_1<T,$$
		where $\delta>0$ and $\max\{s_1,a_1\}<\eta<T$.
		Note that the case $s_2<a_1$ is irrelevant because in this case \[q(x,0,s,t)=0,\qquad\hbox{ a.e. in } (0,1)\times(0,S)\times(S-s_2+a_1,T).\]
		Let $\delta>0$ be small enough, let $s\in (0,s_2-a_1-\delta)$ and $t\in (\eta,T)$. From the Figure 4, one can see that all the characteristic starting from $(0,s,t)$ passes through the observation domain $Q_1$ whenever $t>\sup\{a_1,s_1\}$ and $0\leq s<s_2-a_1$. 
		
		First, we focus on the estimate of $q(x,0,s,t)$ on $\in (0,s_2-a_1-\delta)\times\in (\eta,T)$. Two situations arise: 
		
		\underline{\textbf{Case 1: $a_1\ge s_1$.}}

		Denotes
		
		\[w(x,\lambda)=\tilde{q}(x,t-\lambda,s+t-\lambda,\lambda),\qquad (x,\lambda)\in (0,1)\times (0,t).\]
		
		Then $w$ satisfies the following PDE  
		\begin{align}
			\begin{cases}
				\dfrac{\partial w(x,\lambda)}{\partial \lambda}-L w(x,\lambda)=0,& \text{ in }  (0,1)\times (0,t),\\
				w(0,\lambda)=w(1,\lambda)=0,&  \text{ on } \times (0,t),	\\
				w(x,0)	=\tilde{q}(x,t,s+t,0), & \text{ in } (0,1).
			\end{cases}
		\end{align}
		According to Proposition \ref{observability-inequality}, for $0<t_0<t_1<t$, we have 
		
		\[\int_{0}^1\dfrac{1}{\sigma(x)}w^2(x,t)dx\leq\int_{0}^1\dfrac{1}{\sigma(x)}w^2(x,t_1)dx\leq c_1e^{\dfrac{c_2}{t_1-t_0}}\int\limits_{t_0}^{t_1}\int_{0}^1\dfrac{1}{\sigma(x)}w^2(x,\lambda)dxd\lambda.\]
		
		Or, equivalently,
		
		\[\int_{0}^1\dfrac{1}{\sigma(x)}\tilde{q}^2(x,0,s,t)dx\leq c_1e^{\dfrac{c_2}{t_1-t_0}}\int\limits_{t_0}^{t_1}\int_{\omega}\dfrac{1}{\sigma(x)}\tilde{q}^2(x,t-\lambda,s+t-\lambda,\lambda)dxd\lambda\]\[=C\int\limits_{t-t_1}^{t-t_0}\int_{\omega}\dfrac{1}{\sigma(x)}\tilde{q}^2(x,\alpha,s+\alpha,t-\alpha)dxd\alpha.\]	
		
		Then, for $t_0=t-a_1-\delta$ and $t_1=t-a_1$, we get
		
		\[\int_{0}^1\dfrac{1}{\sigma(x)}\tilde{q}^2(x,0,s,t)dx\leq C\int_{a_1}^{a_1+\delta}\int_{\omega}\dfrac{1}{\sigma(x)}\tilde{q}^2(x,\alpha,s+\alpha,t-\alpha)dxd\alpha.\]
		
		Integrating with respect to $s$ over $(0,s_2-a_1-\delta)$ we get
		
		\[\int_{0}^{s_2-a_1-\delta}\int_{0}^1\dfrac{1}{\sigma(x)}\tilde{q}^2(x,0,s,t)dxds\leq C
		\int\limits_{a_1}^{a_1+\delta}\int\limits_{a_1}^{s_2}\int_{\omega}\dfrac{1}{\sigma(x)}\tilde{q}^2(x,a,l,t-a)dxdlda.\]
		Finlay, integrating with respect to $t$ over $(\eta,T)$, we obtain
		\[\int_{\eta}^{T}\int\limits_{0}^{s_2-a_1-\delta}\int_{0}^1\dfrac{1}{\sigma(x)}\tilde{q}^2(x,0,s,t)dxdsdt\leq C \int\limits_{a_1}^{a_1+\delta}\int\limits_{s_1}^{s_2}\int\limits_{\eta-a}^{T-a}\int_{\omega}\dfrac{1}{\sigma(x)}\tilde{q}^2(x,a,s,t)dxdtdsda\]\[\leq C(\delta)\int\limits_{0}^{T} \int\limits_{a_1}^{a_2}\int\limits_{s_1}^{s_2}\int_{\omega}\dfrac{1}{\sigma(x)}\tilde{q}^2(x,a,s,t)dxdsdadt.\]
		
		Then,
		
		\begin{equation}
			\int_{\eta}^{T}\int\limits_{0}^{s_2-a_1-\delta}\int_{\Omega}\dfrac{1}{\sigma(x)}\tilde{q}^2(x,0,s,t)dxdsdt\leq C(\delta)\int\limits_{0}^{T} \int\limits_{s_1}^{s_2}\int\limits_{a_1}^{a_2}\int_{\omega}\dfrac{1}{\sigma(x)}\tilde{q}^2(x,a,s,t)dxdsdadt.
		\end{equation}
		
		\underline{\textbf{Case 2: $s_1\ge a_1$.}}
		
		In this case, we split the interval $(0,s_2-a_1-\delta)$ in two sub intervals $(0,s_1-a_1)\cup (s_1-a_1,s_2-a_1-\delta)$. Then, for $s\in (s_1-a_1,s_2-a_1-\delta)$, we consider the function $w$ defined by
		
		\[w(x,\lambda)=\tilde{q}(x,t-\lambda,s+t-\lambda,\lambda), \qquad (x,\lambda)\in (0,1)\times (0,t)).\]
		
		Then $w$ satisfies:
		\begin{equation}
			\begin{cases}
				\dfrac{\partial w(x,\lambda)}{\partial \lambda}-L w(x,\lambda)=0,&\text{ in } (0,1)\times (0,t),\\
				w(0,\lambda)=w(1,\lambda)=0,& \text{ } (0,t),	\\
				w(x,0)	=\tilde{q}(x,t,s+t,0), &\text{ in }(0,1).
			\end{cases}
		\end{equation}
		
		In view of Proposition \ref{observability-inequality}, for $0<t_0<t_1<t$, we have
		
		\[\int_{0}^1\dfrac{1}{\sigma(x)}w^2(x,t)dx\leq\int_{0}^1\dfrac{1}{\sigma(x)}w^2(x,t_1)dx\leq c_1e^{\dfrac{c_2}{t_1-t_0}}\int_{t_0}^{t_1}\int_{\omega}\dfrac{1}{\sigma(x)}w^2(x,\lambda)dxd\lambda.\]
		
		Or, equivalently,
		
		\[\int_{0}^1\dfrac{1}{\sigma(x)}\tilde{q}^2(x,0,s,t)dx\leq c_1e^{\dfrac{c_2}{t_1-t_0}}\int_{t_0}^{t_1}\int_{\omega}\dfrac{1}{\sigma(x)}\tilde{q}^2(x,t-\lambda,s+t-\lambda,\lambda)dxd\lambda\]\[=C\int_{t-t_1}^{t-t_0}\int_{\omega}\dfrac{1}{\sigma(x)}\tilde{q}^2(x,\alpha,s+\alpha,t-\alpha)dxd\alpha.\]	
		
		Thus, for $t_0=t-a_1-\delta$ and $t_1=t-a_1$, we get
		
		\[\int_{0}^1\dfrac{1}{\sigma(x)}\tilde{q}^2(x,0,s,t)dx\leq C(\delta)\int_{a_1}^{a_1+\delta}\int_{\omega}\dfrac{1}{\sigma(x)}\tilde{q}^2(x,\alpha,s+\alpha,t-\alpha)dxd\alpha.\]
		
		Integrating with respect $s$ over $(s_1-a_1,s_2-a_1-\delta)$, we get
		\[\int_{s_1-a_1}^{s_2-a_1-\delta}\int_{0}^1\dfrac{1}{\sigma(x)}\tilde{q}^2(x,0,s,t)dxds\leq C(\delta)
		\int_{a_1}^{a_1+\delta}\int\limits_{s_1}^{s_2}\int_{\omega}\dfrac{1}{\sigma(x)}\tilde{q}^2(x,a,l,t-a)dxdlda.\]
		Finally, integrating with respect $t$ over $(\eta,T)$, we obtain
		\[\int\limits_{\eta}^{T}\int\limits_{s_1-a_1}^{s_2-s_1-\delta}\int_{0}^1\dfrac{1}{\sigma(x)}\tilde{q}^2(x,0,s,t)dxdsdt\leq C(\delta) \int\limits_{a_1}^{a_1+\delta}\int_{s_1}^{s_2}\int\limits_{\eta-a}^{T-a}\int_{\omega}\dfrac{1}{\sigma(x)}\tilde{q}^2(x,a,s,t)dxdtdsda\]\[\leq C(\delta)\int_{0}^{T} \int_{a_1}^{a_2}\int_{s_1}^{s_2}\int_{\omega}\dfrac{1}{\sigma(x)}\tilde{q}^2(x,a,s,t)dxdsdadt.\]
		Therefore,
		\begin{equation}
			\int\limits_{\eta}^{T}\int\limits_{s_1-a_1}^{s_2-a_1-\delta}\int_{\Omega}\dfrac{1}{\sigma(x)}\tilde{q}^2(x,0,s,t)dxdsdt\leq C(\delta)\int\limits_{0}^{T} \int\limits_{s_1}^{s_2}\int\limits_{a_1}^{a_2}\int_{\omega}\dfrac{1}{\sigma(x)}\tilde{q}^2(x,a,s,t)dxdsdadt.\label{rr}
		\end{equation}
		Now, we estimate $q(x,0,s,t)$ on $(0,s_1-a_1)\times (\eta,T) $. Here, too, two situations arise,
		\[s_2-s_1> s_1-a_1\hbox{ and }s_2-s_1<s_1-a_1.\]
		If $s_2-s_1<s_1-a_1,$ we split $(0,s_1-a_1)$ as follows
		\[(0,s_2-s_1)\cup(s_2-s_1,s_1-a_1).\]
		Next, the calculations are done only for the case $s_2-s_1>s_1-a_1$ as for the second case it follows in the same way. If we set 
		\[w(x,\lambda)=\tilde{q}(x,t-\lambda,s+t-\lambda,\lambda), \qquad (x,\lambda)\in (0,1)\times (0,t),\]
		then $w$ satisfies:
		\begin{align}
			\begin{cases}
				\dfrac{\partial w(x,\lambda)}{\partial \lambda}-L w(x,\lambda)=0, &\text{ in } (0,1)\times (0,t),\\
				w(0,\lambda)=w(1,\lambda)=0, & \text{ on } \times (0,t),	\\
				w(x,0)	=\tilde{q}(x,t,s+t,0), &\text{ in }(0,1).
			\end{cases}
		\end{align}
		Therefore, according to Proposition \ref{observability-inequality}, we have
		\[\int_{0}^1\dfrac{1}{\sigma(x)}w^2(x,t)dx\leq\int_{0}^1\dfrac{1}{\sigma(x)}w^2(x,t_1)dx\leq c_1e^{\dfrac{c_2}{t_1-t_0}}\int\limits_{t_0}^{t_1}\int_{\omega}\dfrac{1}{\sigma(x)}w^2(x,\lambda)dxd\lambda,\]
		for $0<t_0<t_1<t$. Equivalently,
		\[\int_{0}^1\dfrac{1}{\sigma(x)}\tilde{q}^2(x,0,s,t)dx\leq c_1e^{\dfrac{c_2}{t_1-t_0}}\int\limits_{t_0}^{t_1}\int_{\omega}\dfrac{1}{\sigma(x)}\tilde{q}^2(x,t-\lambda,s+t-\lambda,\lambda)dxd\lambda\]\[=C\int\limits_{t-t_1}^{t-t_0}\int_{\omega}\dfrac{1}{\sigma(x)}\tilde{q}^2(x,\alpha,s+\alpha,t-\alpha)dxd\alpha.\]	
		Then, for $t_0=t-s_1-\kappa$ with $\kappa>0$ and $t_1=t-s_1$, we get 
		\[\int_{0}^1\dfrac{1}{\sigma(x)}\tilde{q}^2(x,0,s,t)dx\leq C\int\limits_{s_1}^{s_1+\kappa}\int_{\omega}\dfrac{1}{\sigma(x)}\tilde{q}^2(x,l-s,l-s,t-l+s)dxd\alpha.\]
		Integrating with respect to $s$ over $(a_0,s_1-a_1)$, we get
		\[\int\limits_{a_0}^{s_1-a_1}\int_{\Omega}\dfrac{1}{\sigma(x)}\tilde{q}^2(x,0,s,t)dxds\leq C
		\int\limits_{s_1}^{s_1+\kappa}\int\limits_{l-s_1+a_1}^{l-a_0}\int_{\omega}\dfrac{1}{\sigma(x)}\tilde{q}^2(x,a,l,t-a)dxdlda.\] Then
		\[\int\limits_{a_0}^{s_1-a_1}\int_{\Omega}\dfrac{1}{\sigma(x)}\tilde{q}^2(x,0,s,t)dxds\leq C
		\int\limits_{s_1}^{s_1+\kappa}\int\limits_{a_1}^{s_1+\kappa-a_0}\int_{\omega}\dfrac{1}{\sigma(x)}\tilde{q}^2(x,a,l,t-a)dxdlda.\]
		Finlay, integrating with respect to $t$ over $(\eta,T)$, we obtain
		\[\int\limits_{\eta}^{T}\int\limits_{a_0}^{s_1-a_1}\int_{\Omega}\dfrac{1}{\sigma(x)}\tilde{q}^2(x,0,s,t)dxdsdt\leq C \int\limits_{s_1}^{s_1+\kappa}\int\limits_{a_1}^{s_1+\kappa-a_0}\int\limits_{\eta-a}^{T-a}\int_{\omega}\dfrac{1}{\sigma(x)}\tilde{q}^2(x,a,s,t)dxdtdsda,\]
		where $\kappa>0$ is choose small enough such that \[s_1-a_0+\kappa<T<\min\{a_2,\hat{a}\}.\]
		It follows that
		\begin{equation}
			\int_{\eta}^{T}\int\limits_{a_0}^{s_1-a_1}\int_{0}^1\dfrac{1}{\sigma(x)}\tilde{q}^2(x,0,s,t)dxdsdt\leq C(\kappa)\int\limits_{0}^{T} \int\limits_{s_1}^{s_2}\int\limits_{a_1}^{a_2}\int_{\omega}\dfrac{1}{\sigma(x)}\tilde{q}^2(x,a,s,t)dxdsdadt.
		\end{equation}
		Finally, \eqref{rr} together with the fact that
		$$q(x,0,s,t)=0\hbox{ for } t\in (S-s_2+a_1+\delta,+\infty)\hbox{ } s\in (s_2-a_1-\delta,S)$$ implies 
		\[\int\limits_{\eta}^{T}\int\limits_{0}^{S}\int_{0}^1\dfrac{1}{\sigma(x)}\tilde{q}^2(x,0,s,t)dxdsdt\leq C\int\limits_{0}^{T} \int_{s_1}^{s_2}\int\limits_{a_1}^{a_2}\int_{\omega}\dfrac{1}{\sigma(x)}\tilde{q}^2(x,a,s,t)dxdsdadt\] where $\max\{S-s_2+a_1,s_1\}<\eta<T.$
		\begin{remark}
			It is to be noted that in all the above cases, when $\delta \to 0$, or $ \kappa \to 0$, or $\eta \to  T_1$ we get $C(\delta,\kappa,\eta)\to  +\infty.$ 
		\end{remark}
		\begin{remark}
			For the case $\eta<\min\{\hat{a},a_2\}\leq T,$ we split the interval $(\eta,T)$ in two sub-intervals $$(\eta,\min\{\hat{a},a_2\})\cup  (\min\{\hat{a},a_2\},T)$$.In $(\eta,\min\{\hat{a},a_2\}),$ we proceed in the same way. In $(\min\{\hat{a},a_2\},T)$ we proceed as in the proof of \cite[Proposition 3.7]{abst}.
		\end{remark}
		
	\end{proof}
	\begin{figure}[H]\label{dada}
		\begin{overpic}[scale=0.6]{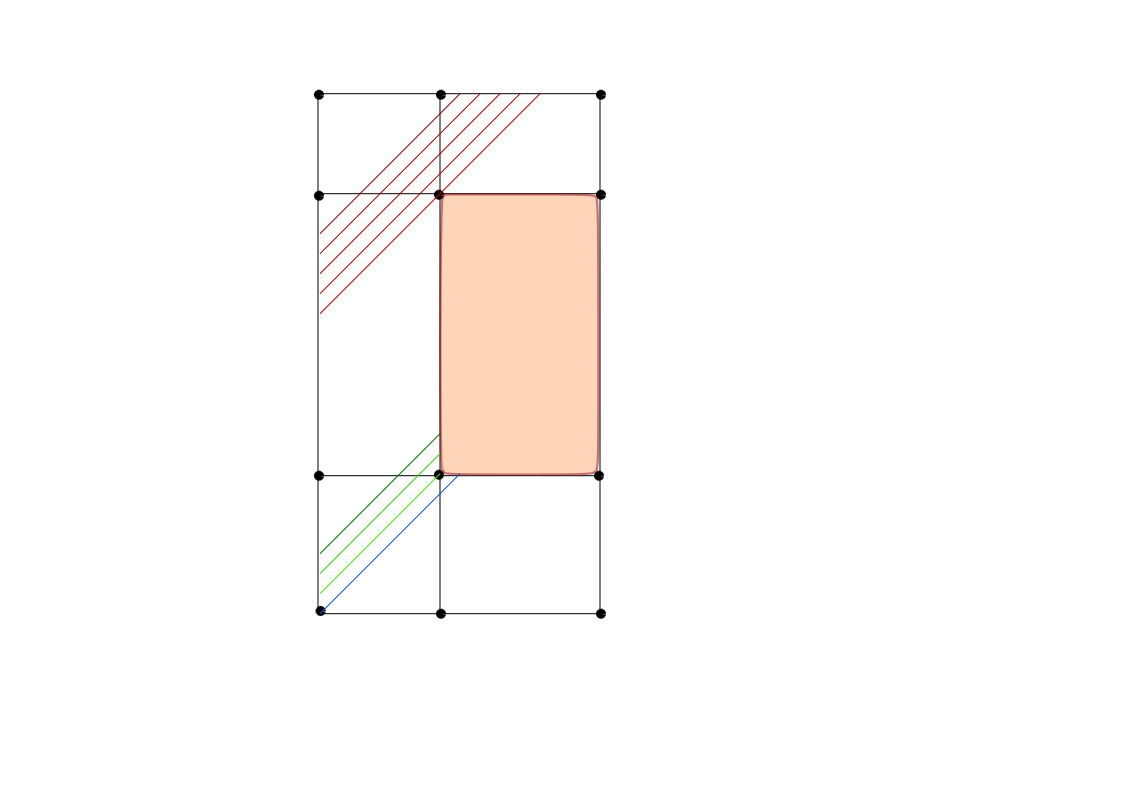}
			\put (25,62) {$S$}	
			\put (53.5,14) {$\hat{a}$}
			\put (25,28) {$s_1$}
			\put (25,53) {$s_2$}				
			\put (38.7,14) {$a_1$}
			\put (42.5,35.5) {$Q_{1}$}		
		\end{overpic}
		\caption{An illustration of the estimation of $q(x,0,s,t)$. Here we have chosen $a_2=\hat{a}$. Since $t>T_1$ all backward characteristics starting from $(0,s,t)$ enter the observation domain (the green and blue lines), or leave the domain $\Sigma$ through the boundary $s=S$ (or at $t=0$) (the red line).}
	\end{figure}
	\begin{figure}[H]
		\begin{subfigure}{0.5\textwidth}
			\begin{overpic}[scale=0.3]{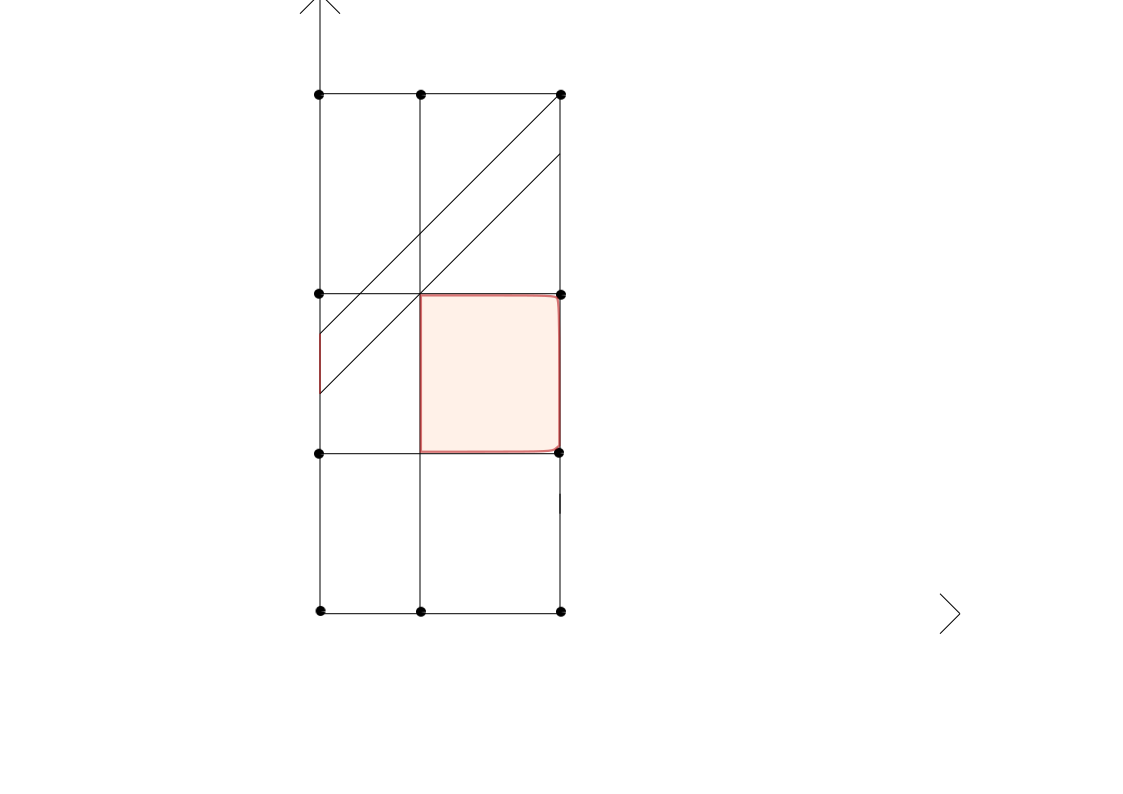}
				\put (25,62) {$S$}	
				\put (69.5,12) {$A$}
				\put (49.5,12.5) {$\hat{a}$}
				\put (25,30) {$s_1$}
				\put (25,44) {$s_2$}
				\put (25.5,35) {$c_1$}
				\put (25.5,41) {$c_2$}				
				\put (36.5,13) {$a_1$}
				\put (42.5,35.5) {$Q_{1}$}		
			\end{overpic}
			\subcaption{For $S-s_2>\hat{a}-a_1,$ we can not estimate $q (x,0,s,t)$ for $s\in(c_1,c_2)$ by the characteristic method. Indeed for even $t>a_1+S-s_2$ the characteristics starting at $(0,s,t)$ leave the domain $\Sigma$ at $t=0, \hbox{ or enter in the region } a>\hat{a},$ without going through the observation domain.}
		\end{subfigure}\quad
		\begin{subfigure}{0.5\textwidth}
			\begin{overpic}[scale=0.3]{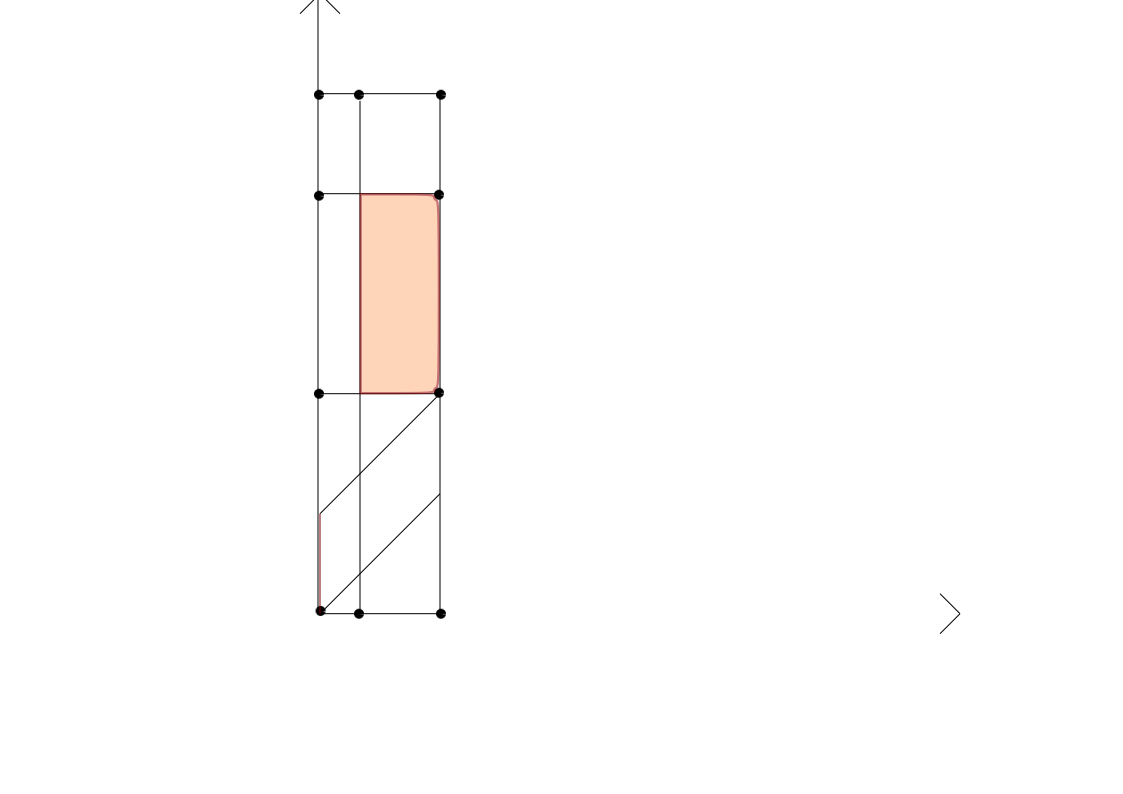}
				\put (25,62) {$S$}	
				\put (38,12.5) {$\hat{a}$}
				\put (25,35) {$s_1$}
				\put (25,53) {$s_2$}				
				\put (31.5,13) {$a_1$}
				\put (34.5,44.5) {$Q_{1}$}	
				\put (25.5,16) {$c_1$}
				\put (25.5,24) {$c_2$}
			\end{overpic}
			\subcaption{For the second case if $s_1>\hat{a}-a_1,$ we can not estimate $q (x,0,s,t)$ for $s\in (c_1,c_2)$ by the characteristic method. Indeed, for even $t>a_1+S-s_2$ the characteristics starting at $(0,s,t)$ leave the domain $\Sigma$ at the boundary $t=0, \hbox{ or enter in the region} a>\hat{a}$, without going through the observation domain.}
		\end{subfigure}
		\caption{Illustration of cases where we can not estimate the non-local term $q(x,0,s,t)$}
	\end{figure}

	\begin{proposition}\label{p2}
		Let the assumptions of Theorem \ref{Main2} be satisfied and let $T>a_1+T_0$. Then for every $q_{0}\in K$, the solution $q$ of the system \eqref{112} obeys
		\begin{equation}
			\displaystyle\int\limits_{0}^{S}\int\limits_{0}^{a_1}\int\limits_{0}^1\dfrac{1}{\sigma(x)}q^2(x,a,s,T)dxdads\leq C_T\displaystyle\int\limits_{0}^{T}\int\limits_{s_1}^{s_2}\int\limits_{a_1}^{a_2}\int\limits_{\l_1}^{l_2}\dfrac{1}{\sigma(x)}q^2(x,a,s,t)dxdadsdt \label{C81}
		\end{equation}
	\end{proposition}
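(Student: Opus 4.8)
The statement \eqref{C81} is the final-time counterpart of Proposition \ref{Prop1}, and my plan is to prove it by the very same characteristic-plus-observability mechanism, now tracing the backward characteristics issued from $(a,s,T)$ with $a\in(0,a_1)$ rather than from the renewal face $\{a=0\}$. First I would pass to the mortality-free unknown $\tilde q:=q\,\pi_1(a)\pi_2(s)$ used above, so that on any characteristic along which $\beta$ vanishes the function $w(x,\lambda):=\tilde q(x,a+T-\lambda,s+T-\lambda,\lambda)$ solves the degenerate heat equation $w_\lambda-Lw=0$ with homogeneous Dirichlet data, placing us exactly in the framework of Proposition \ref{observability-inequality}, and with $q(\cdot,a,s,T)=w(\cdot,T)$. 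Because $a<a_1<\hat a$ and $T_0<\min\{\hat a-a_1,a_2-a_1\}$, the arc of each characteristic running from the control band up to the final time stays in $\{a<\hat a\}$, where $\beta\equiv0$ by \textbf{(H4)}; this is what legitimizes treating $w$ as a genuine solution of the heat equation there.

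Next I would split $(0,a_1)\times(0,S)$ according to the fate of the characteristic, mirroring the steps of Proposition \ref{Prop1}. For those $(a,s)$ whose characteristic $\tau\mapsto(a+\tau,s+\tau)$ enters the control band $(a_1,a_2)\times(s_1,s_2)$ at some $\tau\in(0,T)$, I apply Proposition \ref{observability-inequality} on a time window $(t_0,t_1)$ with $T-a_2+a<t_0<t_1<T-a_1+a$, during which $(a+T-\lambda,s+T-\lambda)$ lies in that band; this is the analogue of the choices $t_0=t-a_1-\delta$, $t_1=t-a_1$ in Proposition \ref{Prop1}, and it bounds $\|\tilde q(\cdot,a,s,T)\|_{\frac1\sigma}^2$ by an integral of $\tilde q^2$ over $\omega\times(a_1,a_2)\times(s_1,s_2)$. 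For the $(a,s)$ whose characteristic leaves the domain through the face $\{s=S\}$ before entering the band, I use $q(\cdot,\cdot,S,\cdot)=0$ together with $\beta\equiv0$ on the remaining short arc to conclude, as in Step 1 of Proposition \ref{Prop1}, that $w$ solves a homogeneous heat equation with zero datum and zero source, hence $q(\cdot,a,s,T)=0$ there. Finally, on arcs where the age crosses $\hat a$ the Duhamel term of \eqref{alp} contributes the birth term $q(x,0,\cdot,\cdot)$; bounding $e^{(T-r)P}$ on $L^2_{\frac1\sigma}$ (the survival factors $\pi_1,\pi_2$ are $\le1$ and $(e^{\tau L})_{\tau\ge0}$ is a bounded semigroup), using $\beta\in C([0,A])$ and Cauchy--Schwarz in $r$, reduces this piece to $\int_\eta^T\int_0^S\|q(\cdot,0,s,t)\|_{\frac1\sigma}^2$.

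I would then integrate these pointwise-in-$(a,s)$ bounds over $(0,a_1)\times(0,S)$ and apply the changes of variables $\alpha=T-\lambda$, $l=s+\alpha$ to realign every right-hand side with the fixed observation cylinder $\omega\times(a_1,a_2)\times(s_1,s_2)\times(0,T)$, after which Proposition \ref{Prop1} absorbs the birth-term integral into the observation. Passing between $q$ and $\tilde q=q\,\pi_1(a)\pi_2(s)$ introduces only bounded factors of the form $\exp(2\|\mu_1\|_{L^1(0,\hat a)}+2\|\mu_2\|_{L^1(0,s_2)})$: on the left the nonzero contributions are supported in $\{s<s_2\}$ (the large-size part having already been shown to vanish), and on the observation side the size is likewise below $s_2<S$, exactly as in the opening lines of the proof of Proposition \ref{Prop1}. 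This produces \eqref{C81} with a constant $C_T$ that degenerates as $\delta\to0$ or $\eta\to T_1$, in agreement with the accompanying remark.

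The main obstacle is the geometric bookkeeping rather than any single estimate: one must verify that the three cases above genuinely exhaust $(0,a_1)\times(0,S)$ with no residual dependence on the initial datum $q_0$, for otherwise \eqref{C81} would fail, $q_0$ being arbitrary. This is precisely where the hypotheses $a_1<\hat a$, $T_0<\min\{\hat a-a_1,a_2-a_1\}$ and $T>a_1+T_0$ are indispensable: they force each characteristic issued from $(a,s,T)$ either to sweep the control band while still at an age below $\hat a$, or to escape through $\{s=S\}$ before $\beta$ switches on, so that the potentially uncontrollable $q_0$–contribution is never activated. A secondary technical point is to check that the changes of variables map each observation window into the single fixed cylinder $Q_1$, so that the constants remain uniform and do not blow up upon integration in $(a,s)$.
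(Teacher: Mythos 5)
Your argument is sound, but it is not the proof the paper gives: the paper's proof of Proposition \ref{p2} is purely structural. It rewrites the generator as $\mathcal{A}\varphi=-\partial_a\varphi-\mathbf{A}\varphi-\mu_1(a)\varphi$, where $\mathbf{A}\psi=\partial_s\psi-L\psi+\mu_2(s)\psi$ acts on the size--space state space $L^{2}_{\frac{1}{\sigma}}((0,1)\times(0,S))$, factors the control as $\mathcal{B}=\mathbf{1}_{(a_1,a_2)}\mathbf{B}$ with $\mathbf{B}=\mathbf{1}_{(s_1,s_2)}\mathbf{1}_{\omega}$, recalls that the pair $(\mathbf{A}^*,\mathbf{B}^*)$ is final-state observable in any time $>T_0$, and then invokes the abstract result for age-structured systems, \cite[Proposition 3.5]{abst}, to obtain \eqref{C81} for $T>a_1+T_0$; all characteristics-plus-parabolic-observability work is delegated to the cited results. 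You instead run that machinery by hand in the concrete setting: backward characteristics issued from $(a,s,T)$ with $a\in(0,a_1)$, the scalar degenerate observability inequality of Proposition \ref{observability-inequality} along the arcs lying in $Q_1$, the vanishing of $q(\cdot,a,s,T)$ when the characteristic escapes through $\{s=S\}$, and Proposition \ref{Prop1} to absorb the birth term. Your route is self-contained and makes explicit where each hypothesis enters ($T>a_1+T_0\geq a_1+S-s_2$ ensures the escape through $\{s=S\}$ happens at a positive time and at age below $\hat{a}$, so the $q_0$-contribution never appears for characteristics missing the band, while $s_1\leq T_0<\min\{a_2-a_1,\hat{a}-a_1\}$ guarantees every other characteristic meets $Q_1$ at ages below $\hat{a}$); the paper's route is shorter and modular, treating size as part of an abstract state space so that only the age transport has to be unwound, at the price of importing the observability of the size--space subsystem and the abstract age-windowing result. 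Two remarks on your version: first, your third case (arcs where the age crosses $\hat{a}$) is avoidable, since the band is entered at age $\max\{a_1,a+s_1-s\}\leq a_1+T_0<\hat{a}$, so the observation window can always be placed entirely below $\hat{a}$ and the birth term never needs to be estimated, removing the appeal to Proposition \ref{Prop1} altogether; second, the uniformity of the constant that you flag is genuinely delicate only near the diagonal $s-a=s_2-a_1$, where the in-band window shrinks to zero, and there the exit-through-$\{s=S\}$ argument takes over on the strip $\{s-a>s_2-a_1-\delta\}$ for small $\delta$ --- exactly the $\delta$-splitting already used in Proposition \ref{Prop1}.
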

	
	\begin{proof}[Proof of the Proposition \ref{p2} ]
		Let us consider the operator $\mathbf{A}: D(\mathbf{A}) \to L^{2}_{\sigma}((0,1)\times (0,S)) $ defined by $$\mathbf{A}\psi:=\partial_{s}\psi-L\psi+\mu_2(s)\psi,\quad D(\mathbf{A}):=\left\{\varphi/ \mathbf{A}\varphi\in L^{2}_{\sigma}((0,1)\times (0,S)), \; \varphi(x,0)=0,\;  \varphi(0,\cdot)=\varphi(1,\cdot)=0\right\}.$$ 
		Then, the operator $\mathcal{A}$ can be rewritten as \[\mathcal{A}\varphi:=-\partial_{a}\varphi-\mathbf{A}\varphi-\mu(a)\varphi, \qquad \varphi\in D(\mathcal{A}),\]
		and hence the adjoint operator $\mathcal{A}^*$ of $\mathcal{A}$ is given by
		\[\mathcal{A}^*\varphi:=\partial_{a}\varphi-\mathbf{A}^*\varphi-\mu(a)\varphi +\beta(a)\varphi(\cdot,0,\cdot,\cdot),\qquad \varphi\in D(\mathcal{A^*}).\]
		Moreover, we select the following operators $$\mathbf{B}:=\mathbf{1}_{(s_1,s_2)}\mathbf{1}_{\omega}\qquad\hbox{and} \qquad \mathcal{B}:=\mathbf{1}_{(a_1,a_2)}\mathbf{B}.$$
		Note that the operator $(\mathbf{A}^*,\mathbf{B}^*)$ is final state observable for every $T>T_0$, cf. \cite{b1}. Therefore, \cite[Proposition 3.5]{abst} yields the inequality \eqref{C81}.
	\end{proof}
	
	\begin{remark}
		Note that the result of Proposition \ref{p2} holds for $T>T_1$. However, it does not improve the main result by using the approach developed in \cite{b1}.
	\end{remark}
	
	Now, let us consider the following cascade system
	\begin{equation}\label{322}
		\left\lbrace
		\begin{array}{ll}
			\dfrac{\partial w}{\partial t}-\dfrac{\partial w}{\partial a}-\dfrac{\partial w}{\partial s}-L w+(\mu_1(a)+\mu_2(s))w=0, &\hbox{ in }Q ,\\ 
			w(0,a,s,t)=w(1,a,s,t)=0, &\hbox{ on }\Sigma,\\ 
			w\left( x,A,s,t\right) =0, &\hbox{ in } Q_{S,T}, \\
			w(x,a,S,t)=0, &\hbox{ in } Q_{S,T},\\
			w\left(x,a,s,0\right)=w_0,&\hbox{ in }Q_{A,S}.
		\end{array}\right.
	\end{equation} 
	We also need the following result for the proof of Theorem \ref{Main3}.
	\begin{proposition}\label{1p}
		Let the assumption of Theorem \ref{Main2} be satisfied. Let $s_0\in(s_1,s_2)$, $T>s_1$ and $a_1<a_0<a_2-s_1.$ Then, there exists a constant $C_T>0$ such that the solution $w$ of the system \eqref{322} satisfies the following inequality
		\begin{equation}\label{C8}
			\displaystyle\int\limits_{0}^{s_0}\int\limits_{a_1}^{a_0}\int\limits_{0}^1\dfrac{1}{\sigma(x)}w^2(x,a,s,T)dxdads\leq C_T\displaystyle\int\limits_{0}^{T}\int\limits_{s_1}^{s_2}\int\limits_{a_1}^{a_2}\int_{\omega}\dfrac{1}{\sigma(x)}w^2(x,a,s,t)dxdadsdt.
		\end{equation}
		Moreover, if $0<\epsilon<\delta$ is chosen such that $T>S-s_2+\delta,$ and $s_0=s_2-\epsilon,$ then
		\begin{align}\label{C9}
			w(x,a,s,T)=0,\quad\hbox{ a.e. in }(0,1)\times (a_1,A)\times (s_0,S).
		\end{align}
	\end{proposition}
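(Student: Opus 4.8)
The plan is to argue by the method of characteristics combined with the degenerate parabolic observability inequality of Proposition \ref{observability-inequality}, exactly as in the estimation of the non-local term in Proposition \ref{Prop1}. First I would remove the mortality terms by setting $\tilde w(x,a,s,t):=w(x,a,s,t)\pi_1(a)\pi_2(s)$, with $\pi_1,\pi_2$ as in \eqref{alp}; a direct computation shows that $\tilde w$ solves the clean equation $\partial_t\tilde w-\partial_a\tilde w-\partial_s\tilde w-L\tilde w=0$ with the same boundary conditions as \eqref{322}. Since $a_0<a_2\le A$ and $s_0<s_2\le S$, the factors $\pi_1,\pi_2$ stay bounded above and below by positive constants on the region swept by the relevant characteristics, so it suffices to prove \eqref{C8} for $\tilde w$.

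Next, for $(a,s)$ in the target rectangle I would freeze the backward characteristic issued from $(a,s,T)$ and set $W(x,\lambda):=\tilde w(x,a+T-\lambda,s+T-\lambda,\lambda)$; then $W$ solves $\partial_\lambda W-LW=0$ on $(0,1)$ with homogeneous Dirichlet conditions, and $W(\cdot,T)=\tilde w(\cdot,a,s,T)$. The geometric hypotheses are used precisely to guarantee that this characteristic spends a time interval of uniformly positive length inside the observation cylinder $\omega\times(a_1,a_2)\times(s_1,s_2)$: along the characteristic the age is $a+T-\lambda$ and the size is $s+T-\lambda$, both increasing as $\lambda$ decreases, so I would split into the cases $s\ge s_1$ and $s<s_1$. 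When $s\ge s_1$ the pair $(a,s)$ already lies in $(a_1,a_2)\times(s_1,s_2)$ at $\lambda=T$; when $s<s_1$ the characteristic first meets the band $\{s=s_1\}$ at age $a+s_1-s<a+s_1$, and here the condition $a_0<a_2-s_1$ is exactly what forces this age to stay below $a_2$, i.e. to remain inside the age band. In either case one fixes a short window $[t_0,t_1]\subset(0,T)$ (of width controlled by a small parameter, and with $t_0>0$ guaranteed by $T>s_1$) on which $(a+T-\lambda,s+T-\lambda)\in(a_1,a_2)\times(s_1,s_2)$, and applies Proposition \ref{observability-inequality} to get
\[\int_0^1\frac{1}{\sigma(x)}\tilde w^2(x,a,s,T)\,dx\le C\int_{t_0}^{t_1}\int_\omega\frac{1}{\sigma(x)}\tilde w^2(x,a+T-\lambda,s+T-\lambda,\lambda)\,dx\,d\lambda.\]

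Then I would integrate this pointwise estimate over $(a,s)\in(a_1,a_0)\times(0,s_0)$ and perform the affine change of variables $(a,s,\lambda)\mapsto(a',s',t')=(a+T-\lambda,\ s+T-\lambda,\ \lambda)$, whose Jacobian has modulus one and which is globally injective; by construction its image is contained in $(a_1,a_2)\times(s_1,s_2)\times(0,T)$, so the right-hand side is dominated by $C\int_0^T\int_{s_1}^{s_2}\int_{a_1}^{a_2}\int_\omega\frac{1}{\sigma}\tilde w^2$, which upon reverting to $w$ yields \eqref{C8}. The main obstacle is the bookkeeping in the case $s<s_1$: one must check that the observation window can be chosen with $t_0>0$ and with the age staying strictly below $a_2$ uniformly in $(a,s)$, and it is here that both standing assumptions $T>s_1$ and $a_0<a_2-s_1$ are indispensable.

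Finally, for the vanishing assertion \eqref{C9} I would argue directly from the representation formula \eqref{alp}: since \eqref{322} carries no renewal (birth) term, the solution vanishes identically on $A_2$. If $s>s_0=s_2-\epsilon$ then $S-s<S-s_2+\epsilon<S-s_2+\delta<T$, so the point $(a,s,T)$ lies in $A_2$ for every $a\in(a_1,A)$; hence $w(x,a,s,T)=0$ a.e. there, which is exactly \eqref{C9}.
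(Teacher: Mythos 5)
Your proof is correct, but for the main inequality \eqref{C8} it takes a genuinely different route from the paper's. You transport \emph{both} age and size along the backward characteristic, so that $W(x,\lambda)=\tilde w(x,a+T-\lambda,s+T-\lambda,\lambda)$ solves the purely parabolic problem $\partial_\lambda W-LW=0$, and you then invoke only the scalar degenerate observability inequality of Proposition \ref{observability-inequality}; the hypotheses $T>s_1$ and $a_0<a_2-s_1$ enter through your case split $s\geq s_1$ versus $s<s_1$, and what makes the argument close is that they give an observation window of length bounded below uniformly in $(a,s)$, namely by $\min\{a_2-a_0-s_1,\,s_2-s_1,\,s_2-s_0,\,a_2-a_0,\,T-s_1\}>0$, so the constant $c_1e^{c_2/(t_1-t_0)}$ stays uniformly bounded --- this is exactly the point you flag, and your case analysis does settle it. The paper instead transports only the age variable: it rewrites \eqref{322} as $\partial_t w-\partial_a w-\mathbf{A}^{*}w+\mu_1(a)w=0$ with $\mathbf{A}^{*}$ the size-space operator of Proposition \ref{p2}, invokes the final-state observability of the pair $(\mathbf{A}^{*},\mathbf{B}^{*})$ for $T>T_0$ (cited from \cite{b1}) through the transfer Lemma \ref{p1}, and splits the age interval as $(a_1,a_0)=(a_1,a_3)\cup(a_3,a_0)$ with $a_3=a_2-T$, choosing different windows $[T_1,T_2]$ on each piece. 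Your version is more self-contained and elementary --- it relies only on a result already stated in the paper and makes explicit where each standing assumption is used --- while the paper's is more modular, delegating the size-direction geometry to the cited observability result, in the spirit of \cite{abst}. For the vanishing statement \eqref{C9} the two arguments coincide in substance: you read it off the representation formula \eqref{alp} (no renewal term, hence $w\equiv 0$ on $A_2$, and $T>S-s$ for $s>s_0$ places $(a,s,T)$ in $A_2$), whereas the paper runs the same characteristics-exit-through-$\{s=S\}$ reasoning via the auxiliary restricted system \eqref{xx1}; your phrasing is the cleaner of the two.
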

	
	\begin{proof}[Proof of Proposition \ref{1p}]
		
		The proof is provided in two parts:
		
		\underline{\textbf{1)- Proof of the inequality $\eqref{C8}$:}}
		
		Let $(\textbf{A}^*,\textbf{B}^*)$ as defined in the proof of Proposition \ref{p2}. Then, the operator $(\mathbf{A}^*,\mathbf{B}^*)$ is final state observable for any $T>T_0=\max\{s_1,S-s_2\}$, cf. \cite{b1}. In particular, for every $T>s_1$ and $s_0\in (s_1,s_2)$, there exists a constant $C_T>0$ such that
		
		\begin{equation}\label{1xx2}
			\int\limits_{0}^{s_0}\int\limits_{0}^{1}\dfrac{1}{\sigma(x)}\varphi^2(x,s,T)dxds  \leq C_T \int\limits_{0}^{T}\int\limits_{s_1}^{s_2}\int\limits_{l_1}^{l_2}\dfrac{1}{\sigma(x)}\varphi^2(x,s,t)dxdsdt,
		\end{equation} 
		where $\varphi$ satisfies 
		\begin{equation}\label{xx}
			\begin{cases}
				\dfrac{\partial \varphi}{\partial t}-\dfrac{\partial \varphi}{\partial s}-L \varphi+\mu_2(s)\varphi=0, &\hbox{ in } (0,1)\times(0,S)\times (0,T) ,\\ 
				\varphi(0,s,t)=\varphi(1,s,t)=0, &\hbox{ on }(0,S)\times (0,T),\\ 
				\varphi(x,S,t)=0, &\hbox{ in } (0,1)\times (0,T),\\
				\varphi\left(x,s,0\right)=\varphi_0, &\hbox{ in } (0,1)\times (0,S).
			\end{cases}
		\end{equation} 
		Then, we have the following result. 
		\begin{lemma}\label{p1}
			Let the assumption of Theorem \ref{Main2} be satisfied. Let $T>s_1$ and $C_T$ be the observability cost in \eqref{1xx2} which satisfies $$C_T\to +\infty, \hbox{ as }T\mapsto s_1.$$
			Furthermore, let $T_1$, $T_2$ and $T_3$ be real numbers such that \[0\leq T_1< T_2\leq T_3, \quad\hbox{ with } \quad T_2-T_1>s_1.\] Then, for every $\varphi_0\in D(\textbf{A}^*)$, the solution $\varphi$ of the problem 
			\begin{align*}
				\dfrac{d \varphi}{dt}=\textbf{A}^* \varphi, \qquad t\in [T_1,T_3],\qquad \varphi(T_1)=\varphi_0,
			\end{align*}
			satisfies the estimate 
			\begin{align}\label{EstimateA}
				\|\varphi(T_3)\|_{L_{\frac{1}{\sigma}}^{2}((0,1)\times(0,s_0))}^2\leq Me^{c_0(T_2-T_1)}C(T_2-T_1)\int\limits_{T_1}^{T_2}\|\varphi(t)\|_{L^{2}_{\frac{1}{\sigma}}((l_1,l_2)\times(s_1,s_2))}dt.
			\end{align}
		\end{lemma}
		
		\begin{proof}[Proof of Lemma \ref{p1}]
			Let $\textbf{C}: D(\textbf{C})\to L^2_{\sigma}((0,1)\times (0,s_0))$ the operator defined by
			\begin{align*}
				\textbf{C}v & =\dfrac{\partial v}{\partial s}+L v-\mu_2(s)v,\cr
				D(\textbf{C}) &=\left\{v\in L^2_{\sigma}((0,1)\times (0,s_0)):\; 	\textbf{C}v\in L^2_{\sigma}((0,1)\times (0,s_0)),\; v(0,\cdot)=v(1,\cdot)=0,\; v(.,s_0)=0\right\}.
			\end{align*}
			It is not difficult to prove that the operator $(\textbf{C},D(\textbf{C}))$ generates a C$_0$-semigroup on $L^2_{\sigma}((0,1)\times (0,s_0))$. Thus, for every $T_2\leq T_3$, there exists $C_{T_{2,3}}>0$ such that \[\int\limits_{0}^{s_0}\int\limits_{0}^{1}\dfrac{1}{\sigma(x)}\varphi^2(x,s,T_3)dxds\leq C_{T_{2,3}}\int\limits_{0}^{S_0}\int\limits_{0}^{1}\dfrac{1}{\sigma(x)}\varphi^2(x,s,T_2)dxds.\]
			Now applying the estimate $\eqref{1xx2}$ on the time interval $[T_1,T_2]$ we get
			\[\int\limits_{0}^{s_0}\int\limits_{0}^{1}\dfrac{1}{\sigma(x)}\varphi^2(x,s,T_2)dxds\leq C_{T_{1,2}}\int\limits_{T_1}^{T_2}\int\limits_{s_1}^{s_2}\int\limits_{l_1}^{l_2}\dfrac{1}{\sigma(x)}\varphi^2(x,s,t)dxdsdt,\]
			for a constant $C_{T_{1,2}}>0$. The combination of the two estimates above gives \eqref{EstimateA}.
		\end{proof}
		
		In order to apply the result of the above lemma, we need to rewrite \eqref{322} as follows
		\begin{align}\label{ad}
			\begin{cases}
				\dfrac{\partial w}{\partial t}-\dfrac{\partial w}{\partial a}-\textbf{A}^*w+\mu_1(a)w=0, &\hbox{ in } (0,1)\times(0,A)\times(0,T),\\ 
				w(x,A,t)=0,  &\hbox{ on }(0,1)\times (0,T),\\
				w(x,a,0)=w_0,& \hbox{ in } (0,1)\times (0,A) .
			\end{cases}
		\end{align}	
		where $w_0\in L^2(0,A;L^2((0,1)\times(0,S)))$.  Moreover, if we set  $$\tilde{w}(x,a,s,t)=w(x,a,s,t)\exp\left(-\int_{0}^{a}\mu_1(r)dr\right),$$ 
		then $\tilde{w}$ satisfies
		\begin{equation}\label{ad1}
			\begin{cases}
				\dfrac{\partial \tilde{w}}{\partial t}-\dfrac{\partial \tilde{w}}{\partial a}-\textbf{A}^*\tilde{w}=0, &\hbox{ in } (0,1)\times(0,A)\times(0,T),\\ 
				\tilde{w}(x,A,t)=0,  &\hbox{ on }(0,1)\times (0,T),\\
				\tilde{w}(x,a,0)=\tilde{w}_0,& \hbox{ in } (0,1)\times (0,A) .
			\end{cases}
		\end{equation}
		
		As before, proving the inequality \eqref{C8} is equivalent to shows that there exits a constant $C>0$ such that the solution $\tilde{w}$ of \eqref{ad1} satisfies 
		
		\begin{equation}\label{cci}
			\int\limits_{0}^{A}\int\limits_{0}^{s_0}\int\limits_{0}^1\dfrac{1}{\sigma(x)}\tilde{w}^2(x,a,s,T)dxdsda\leq C\int\limits_{0}^{T}\int\limits_{a_1}^{a_2}\int\limits_{s_1}^{s_2}\int\limits_{l_1}^{l_2}\dfrac{1}{\sigma(x)}\tilde{w}^2(x,a,s,t)dxdsdadt. 
		\end{equation}
		
		Now, let us consider the trajectory $\gamma(\lambda)=(a+T-\lambda,\lambda)$ (i.e., the backward characteristics starting from $(a,0)$) and set
		\begin{align*}
			\varphi(x,s,\lambda)=\tilde{w}(x,a+T-\lambda,s,\lambda).
		\end{align*}
		We clearly have 
		\[\dfrac{d\varphi}{d\lambda}=\textbf{A}^*\varphi,\qquad \hbox{ in }(0,T).\]
		Let us assume, without loss of generality, that $T<a_2$ and $ T>a_2-a_1$. As such, we split the interval $(a_1,a_0)$ as $$(a_1,a_0)=(a_1,a_3)\cup(a_3,a_0),\quad\hbox{ with }\quad a_3=a_2-T.$$
		
		\underline{\textbf{Upper bound on $(a_1,a_3)$:}}
		
		According to Lemma \ref{p1}, there exists a constant $C_1>0$ such that  
		\begin{align*}
			\|\varphi(T)\|^{2}_{L^{2}_{\frac{1}{\sigma}}((0,1)\times(0,s_0))}\leq C_1\int_{T_1}^{T_2}\|\varphi(\lambda)\|^{2}_{L^{2}_{\frac{1}{\sigma}}(\omega\times(s_,s_2))}
		\end{align*}
		Equivalently, 
		\begin{align*}
			\int_{0}^{s_0}\int_{0}^1\dfrac{1}{\sigma(x)}\tilde{w}^2(x,a,s,T)dxda\leq C_1\int_{T_1}^{T_2}\int_{s_1}^{s_2}\int_{\omega}\dfrac{1}{\sigma(x)}\tilde{w}^2(x,a+T-\lambda,s,\lambda)dxdsd\lambda.
		\end{align*}
		So, by selecting \[T_1=0\quad\hbox{ and } \quad T_2=T+a-a_1,\] we get
		\[\int_{0}^{s_0}\int_{0}^1\dfrac{1}{\sigma(x)}\tilde{w}^2(x,a,s,T)dxda\leq C_1\int_{a_1}^{a+T}\int_{s_1}^{s_2}\int_{\omega}\dfrac{1}{\sigma(x)}\tilde{w}^2(x,l,s,a+T-l)dxdsdl.\]
		Therefore, integrating with respect to $a$ over $(a_1,a_3),$ we obtain
		\[\int_{a_1}^{a_3}\int_{0}^{s_0}\int_{0}^1\dfrac{1}{\sigma(x)}\tilde{w}^2(x,a,s,T)dxda\leq C_1\int_{a_1}^{a_2}\int_{s_1}^{s_2}\int_{\omega}\dfrac{1}{\sigma(x)}\int_{l-T}^{a_3}\tilde{w}^2(x,l,s,a+T-l)dadxdsdl.\]
		Thus, 
		\[\int_{a_1}^{a_3}\int_{0}^{s_0}\int_{0}^1\dfrac{1}{\sigma(x)}\tilde{w}^2(x,a,s,T)dxda\leq C_1\int_{a_1}^{a_2}\int_{s_1}^{s_2}\int_{\omega}\int\limits_{0}^{a_2-l}\dfrac{1}{\sigma(x)}\tilde{w}^2(x,l,s,t)dtdxdsdlda.\]
		Hence,
		\[\int_{a_1}^{a_3}\int_{0}^{s_0}\int_{0}^1\dfrac{1}{\sigma(x)}\tilde{w}^2(x,a,s,T)dxda\leq C_1\int_{0}^{T}\int_{a_1}^{a_2}\int\limits_{s_1}^{s_2}\int_{\omega}\dfrac{1}{\sigma(x)}\tilde{w}^2(x,a,s,t)dxdsdldadt.\]
		
		\underline{\textbf{Upper bound on $(a_3,a_0)$:}}
		
		Similarly, by virtue of Lemma \ref{p1}, we have 
		\[\|\varphi(T)\|^{2}_{L^2((0,1)\times(0,s_0))}\leq C_1\int_{T_1}^{T_2}\|\varphi(\lambda)\|^{2}_{L^2(\omega\times(s_1,s_2))},\]
		for a constant $C_1>0$. Or, equivalently, 
		\[\int_{0}^{s_0}\int_{0}^1\dfrac{1}{\sigma(x)}\tilde{w}^2(x,a,s,T)dxda\leq C_1\int_{T_1}^{T_2}\int_{s_1}^{s_2}\int_{\omega}\dfrac{1}{\sigma(x)}\tilde{w}^2(x,a+T-\lambda,s,\lambda)dxdsd\lambda.\]
		So, by selecting \[T_1=T+a-a_2\hbox{ and }T_2=T+a-a_1,\] we get
		\[\int_{0}^{s_0}\int_{0}^1\dfrac{1}{\sigma(x)}\tilde{w}^2(x,a,s,T)dxda\leq C_1\int_{a_1}^{a_2}\int_{s_1}^{s_2}\int_{\omega}\dfrac{1}{\sigma(x)}\tilde{w}^2(x,l,s,a+T-l)dxdsdl.\]
		Integrating with respect to $a$ over $(a_3,a_2)$, we get
		\[\int_{a_3}^{a_2}\int_{0}^{s_0}\int_{0}^1\dfrac{1}{\sigma(x)}\tilde{w}^2(x,a,s,T)dxda\leq C_1\int\limits_{a_1}^{a_2}\int_{s_1}^{s_2}\int_{\omega}\int_{a_3}^{a_2}\dfrac{1}{\sigma(x)}\tilde{w}^2(x,l,s,a+T-l)dadxdsdl.\]
		Thus,
		\[\int_{a_3}^{a_2}\int_{0}^{s_0}\int_{0}^1\dfrac{1}{\sigma(x)}\tilde{w}^2(x,a,s,T)dxda\leq C_1\int_{a_1}^{a_2}\int\limits_{s_1}^{s_2}\int_{\omega}\int\limits_{a_3+T-l}^{a_2+T-l}\dfrac{1}{\sigma(x)}\tilde{w}^2(x,l,s,t)dtdxdsdlda.\]
		Hence, 
		\[\int_{a_1}^{a_3}\int_{0}^{s_0}\int_{0}^1\dfrac{1}{\sigma(x)}\tilde{w}^2(x,a,s,T)dxda\leq C_1\int_{0}^{T}\int_{a_1}^{a_2}\int_{s_1}^{s_2}\int_{\omega}\dfrac{1}{\sigma(x)}\tilde{w}^2(x,a,s,t)dxdsdldadt.\]
		Note that if $T>a_2$ or if $T<a_2$ and $T\geq a_2-a_1 $, we estimate directly on the interval $(a_1,a_2).$
		
		\begin{figure}[H]1
			\begin{subfigure}{0.4\textwidth}
				\begin{overpic}[scale=0.35]{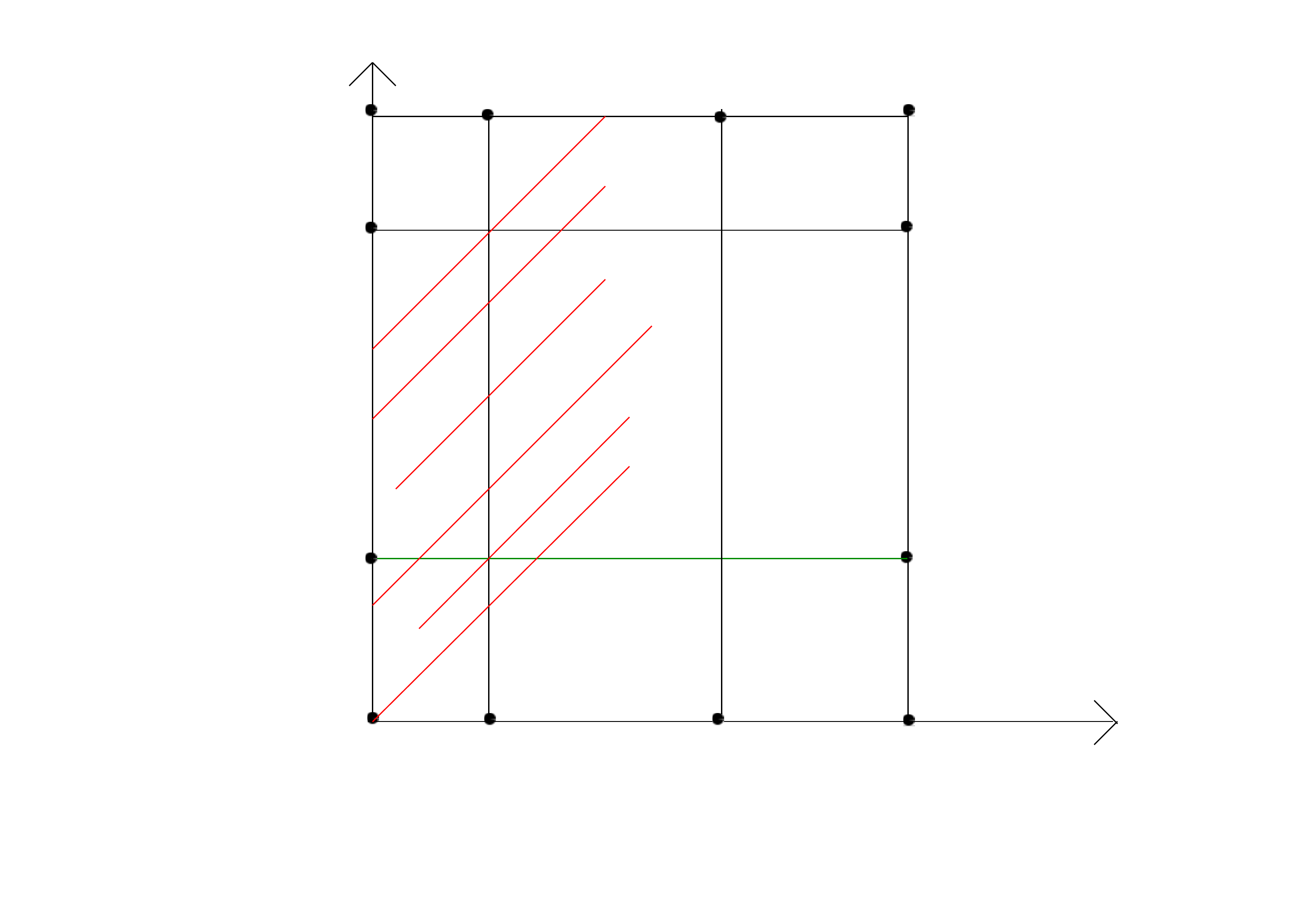}
					\put (25,60.5) {$S$}
					\put (25,52.5) {$s_2$}
					\put (40,34.5) {$Q_1$}	
					\put (25,27) {$s_1$}
					\put (36,12.5) {$a_1$}
					\put (53,12.5) {$a_2$}
					\put (67,11.5) {$A$}
					\put(28.5,24.1){\makebox[0pt]{\Huge\textcolor{black}{.}}}
					\put(32.3,22.5){\makebox[0pt]{\Huge\textcolor{black}{.}}}
					\put(30.2,32.85){\makebox[0pt]{\Huge\textcolor{black}{.}}}
					\put(28.5,43.8){\makebox[0pt]{\Huge\textcolor{black}{.}}}
					\put(28.5,38.3){\makebox[0pt]{\Huge\textcolor{black}{.}}}
				\end{overpic}
				\caption{Estimation of $q(x,a,s,T)$ on $(0,a_1)$. For $T>T_0+a_1$, the backward characteristics starting from $(a,s,T)$ with $a\in (0,a_1)$ enter the observation domain or without and leave the domain $\Sigma$ by the boundary $s=S$. }
			\end{subfigure}\quad
			\begin{subfigure}{0.4\textwidth}
				\begin{overpic}[scale=0.35]{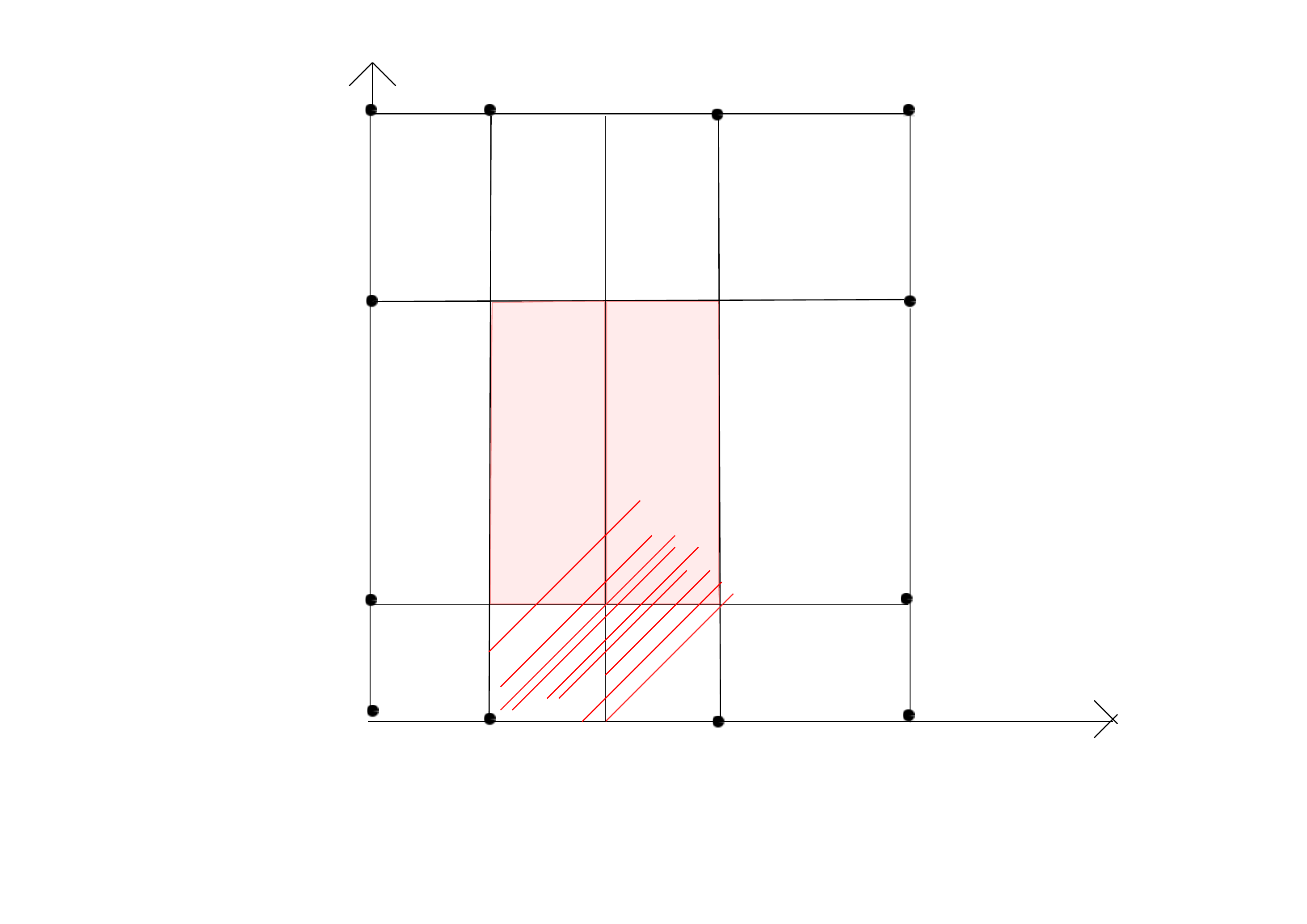}
					\put (25,60.5) {$S$}	
					\put (25,47) {$s_2$}
					\put (25,23) {$s_1$}
					\put (45,13) {$c$}
					\put (42,32.5) {$Q_1$}
					\put (36,12.5) {$a_1$}
					\put (54,12.5) {$a_2$}
					\put (67,11.5) {$A$}
					\put (42,8) {$c=a_2-s_1$}
					\put(38.6,18){\makebox[0pt]{\Huge\textcolor{black}{.}}}
					\put(39.2,16){\makebox[0pt]{\Huge\textcolor{black}{.}}}
					\put(46.4,18.9){\makebox[0pt]{\Huge\textcolor{black}{.}}}
					\put(46.4,15.2){\makebox[0pt]{\Huge\textcolor{black}{.}}}
					\put(44.8,15.2){\makebox[0pt]{\Huge\textcolor{black}{.}}}
					\put(37.5,20.4){\makebox[0pt]{\Huge\textcolor{black}{.}}}
					\put(42,16.9){\makebox[0pt]{\Huge\textcolor{black}{.}}}
					\put(42.8,16.9){\makebox[0pt]{\Huge\textcolor{black}{.}}}
					\put(38.4,16.2){\makebox[0pt]{\Huge\textcolor{black}{.}}}	
				\end{overpic}
				\caption{Estimation of $q(x,a,s,T)$ on $(a_1,a_0)$. For $T>s_1,$ the backward characteristics starting from $(a,s,T)$ with $a\in (a_1,a_2-s_1)$ and $s\in (0,s_0)$ enter the observation domain.
				}
			\end{subfigure}
			\caption{Illustration of the time to estimate $q(x,a,s,T)$ on $(0,a_0)$ }
		\end{figure}
		
		\underline{\textbf{2)- Proof of the equality \eqref{C9}:}}
		
		To this end, let $v$ a function so that 
		\begin{equation}\label{xx1}
			\begin{cases}
				\dfrac{\partial v}{\partial t}-\dfrac{\partial v}{\partial a}-\dfrac{\partial v}{\partial s}-L u+(\mu_1(a)+\mu_2(s))v=0,&\hbox{ in }(0,1)\times (0,A)\times(s_0,S)\times (\eta,T) ,\\ 
				v(0,a,s,t)=v(1,a,s,t)=0,&\hbox{ on }(0,A)\times(s_0,S)\times (\eta,T),\\ 
				v\left( x,A,s,t\right) =0,&\hbox{ in } (0,1)\times(s_0,S)\times (\eta,T) \\
				v(x,a,S,t)=0,&\hbox{ in }(0,1)\times (0,A)\times (\eta,T)\\
				v(x,a,s,\eta)=w_{\eta},&\hbox{ in }(0,1)\times (0,A)\times(s_0,S),
			\end{cases}
		\end{equation} 
		where $w_{\eta}=w(x,a,s,\eta)$ in $(0,1)\times(0,A)\times(s_0,S)$. It is quite obvious that the solution $w$ of \eqref{322} satisfies \[w(x,a,s,t)=v(x,a,s,t),\quad\hbox{ a.e. in }(0,1)\times(0,A)\times(s_0,S)\times(0,T).\]
		Since $T>S-s_2$, then there exists $\delta>0$ such that $T>S-s_2+2\delta$. Moreover, if we choose $s_0$ such that $s_0=s_2-\delta,$ we get $$(s_0,S)\subset (s_2-2\delta,S).$$ Therefore, \[T>S-s, \quad \hbox{ for all } \quad s\in (s_0,S),\] and hence \[w(x,a,s,T)=0,\quad\hbox{ a.e. in }\quad(0,1)\times (0,A)\times (s_0,S).\] 
		This complete the proof of Proposition \ref{1p}.
	\end{proof}
	
	After such a long but necessary preparation we can now clearly prove Theorem \ref{Main3}. To this end, let $q=v_1+v_2$ with $v_1$ and $v_2$ are functions satisfying, respectively,
	\begin{equation}\label{3220}
		\begin{cases}
			\dfrac{\partial v_1}{\partial t}-\dfrac{\partial v_1}{\partial a}-\dfrac{\partial v_1}{\partial s}-L v_1+(\mu_1(a)+\mu_2(s))v_1=0,& \hbox{ in }Q_{A,S}\times (\eta,T) ,\\ 
			v_1(0,a,s,t)=v_1(1,a,s,t)=0,&\hbox{ on }(0,A)\times(0,S)\times (\eta,T),,\\ 
			v_1\left( x,A,s,t\right) =0,&\hbox{ in } (0,1)\times(0,S)\times (\eta,T) \\
			v_1(x,a,s_2,t)=0,&\hbox{ in } Q_{S,T}\\
			v_1\left(x,a,s,\eta\right)=q_{\eta},&\hbox{ in }Q_{A,S},
		\end{cases}
	\end{equation} 
	where $q_{\eta}=q(x,a,s,\eta)$ in $Q_{A.S}$, and
	\begin{equation}\label{3221}
		\begin{cases}
			\dfrac{\partial v_2}{\partial t}-\dfrac{\partial v_2}{\partial a}-\dfrac{\partial v_2}{\partial s}-L v_2+(\mu_1(a)+\mu_2(s))v_2=V, &\hbox{ in }Q_{A.S}\times (\eta,T) ,\\ 
			v_2(0,a,s,t)=v_2(1,a,s,t)=0, &\hbox{ on } (0,A)\times(0,S)\times (\eta,T),\\ 
			v_2\left( x,A,s,t\right) =0, &\hbox{ in } (0,1)\times(0,S)\times (\eta,T) \\
			v_2(x,a,s_2,t)=0,&\hbox{ in } Q_{s_2,T}\\
			v_2\left(x,a,s,\eta\right)=0, &\hbox{ in }Q_{A,S}.
		\end{cases}
	\end{equation}
	where $V(x,a,s,t)=\beta(a)q(x,0,s,t)$. According to Duhamel’s formula we clearly have \[v_2(x,a,s,t)=\int_{\eta}^{t}e^{(t-r)\mathcal{A}^{*}}V(\cdot,\cdot,\cdot,r)dr,\]
	where we recall that $\mathfrak{T}^{*}:=(e^{t\mathcal{A}^*})_{t\geq 0}$ is the adjoint semigroup of $\mathfrak{T}$ generated by $\mathcal{A}^{*}$. Moreover, the solution $v_2$ of the system \eqref{3221} satisfies the following estimate.
	\begin{proposition}\label{Propo1}
		Let the assumption of Theorem \ref{Main2} be satisfied. Then, the solution $v_2$ of the system \eqref{3221} satisfies
		\begin{align}
			\int\limits_{\eta}^{T}\int\limits_{s_1}^{s_2}\int\limits_{a_1}^{a_2}\int_{\omega}\dfrac{1}{\sigma(x)}v_{2}^2(x,a,s,t)dxdadsdt&\leq \int\limits_{\eta}^{T}\int\limits_{0}^{S}\int\limits_{0}^{A}\int\limits_{0}^{1}\dfrac{1}{\sigma(x)}v_{2}^2(x,a,s,t)dxdadsdt\nonumber\\
			&\leq C \int\limits_{\eta}^{T}\int\limits_{0}^{S}\int\limits_{0}^{1}\dfrac{1}{\sigma(x)}q^2(x,0,s,t)dxdsdt,
		\end{align}
		where $C=e^{\frac{3}{2}T}\|\beta\|^{2}_{\infty}A$. 
	\end{proposition}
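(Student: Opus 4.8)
The first inequality is purely a monotonicity statement: the integrand $\tfrac{1}{\sigma(x)}v_2^2$ is non-negative, and the domain $\omega\times(a_1,a_2)\times(s_1,s_2)$ on the left is contained in $(0,1)\times(0,A)\times(0,S)$ on the right, so enlarging the region of integration can only increase the value. Everything therefore reduces to the second inequality, for which the plan is to run a weighted energy estimate directly on the state equation of \eqref{3221}, regarding $V(x,a,s,t)=\beta(a)q(x,0,s,t)$ as a given source.

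First I would measure the source in $K$. Since $q(x,0,s,t)$ does not depend on $a$ and $0\le\beta\le\|\beta\|_\infty$, one has $\int_0^A\beta(a)^2\,da\le A\|\beta\|_\infty^2$, whence
\[
\|V(\cdot,\cdot,\cdot,r)\|_{K}^2=\Big(\int_0^A\beta(a)^2\,da\Big)\int_0^S\int_0^1\frac{1}{\sigma(x)}q^2(x,0,s,r)\,dx\,ds\le A\|\beta\|_\infty^2\,g(r)^2,
\]
where $g(r)^2:=\int_0^S\int_0^1\frac{1}{\sigma(x)}q^2(x,0,s,r)\,dx\,ds$ is, up to the constant, the integrand appearing on the right-hand side of the claim.

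Next I would multiply the equation for $v_2$ by $\tfrac1\sigma v_2$, integrate over $Q_{A,S}$, and set $E(t):=\|v_2(\cdot,t)\|_K^2$. The four terms produced by $-\partial_a$, $-\partial_s$, $-L$ and $+(\mu_1+\mu_2)$ are all dissipative: integrating by parts in $a$ and $s$ and using the outflow conditions $v_2(\cdot,A,\cdot,\cdot)=0$ and $v_2(\cdot,\cdot,S,\cdot)=0$ leaves the non-negative boundary terms $\tfrac12\int\tfrac1\sigma v_2^2(\cdot,0,\cdot,\cdot)$ and $\tfrac12\int\tfrac1\sigma v_2^2(\cdot,\cdot,0,\cdot)$; the diffusion term equals $\int_0^1\gamma\,(v_2)_x^2\,dx\ge0$ by the weighted identity $(Lu,u)_{\frac1\sigma}=-\int_0^1\gamma u_x^2\,dx$ (this is exactly where Assumption \ref{Main1} enters); and the mortality term is non-negative by {\bf (H1)}--{\bf (H2)}. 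Discarding these non-negative contributions and applying Cauchy--Schwarz in $K$ gives
\[
\tfrac12 E'(t)\le\int\frac1\sigma v_2\,V\le E(t)^{1/2}\,\|V(\cdot,t)\|_K,\qquad\text{i.e.}\qquad \frac{d}{dt}E(t)^{1/2}\le\|V(\cdot,t)\|_K .
\]
Since $v_2(\cdot,\eta)=0$ we have $E(\eta)=0$, so integration yields $E(t)^{1/2}\le\int_\eta^t\|V(\cdot,r)\|_K\,dr$.

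Finally I would square, bound $\big(\int_\eta^t\|V\|_K\,dr\big)^2\le(t-\eta)\int_\eta^t\|V\|_K^2\,dr$ by Cauchy--Schwarz, insert the source estimate, and integrate in $t$ over $(\eta,T)$ to obtain
\[
\int_\eta^T E(t)\,dt\le\frac{(T-\eta)^2}{2}\,A\|\beta\|_\infty^2\int_\eta^T g(r)^2\,dr .
\]
Since $\tfrac{(T-\eta)^2}{2}\le\tfrac{T^2}{2}\le e^{\frac32 T}$ for every $T>0$, this is precisely the claimed bound with $C=e^{\frac32 T}\|\beta\|_\infty^2A$. Equivalently, one may argue through the Duhamel representation $v_2(t)=\int_\eta^t e^{(t-r)\mathcal{A}^*}V(r)\,dr$ together with the (quasi-)contraction property of the underlying semigroup, which is established by the very same energy identity. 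The only delicate point is the sign bookkeeping of the transport boundary terms: one must check that the boundary data of \eqref{3221} are genuinely the outflow conditions at $a=A$ and $s=S$, so that every boundary contribution carries the favourable sign and can be dropped; once this is secured, the remaining steps are routine.
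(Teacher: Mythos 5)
Your proof is correct, and its engine is the same as the paper's: test the equation for $v_2$ with $\frac{1}{\sigma}v_2$ and drop the three dissipative contributions — the transport boundary terms (which sit at $a=0$ and $s=0$ with a favourable sign because of the outflow conditions at $a=A$ and $s=S$), the degenerate diffusion term via $(Lu,u)_{\frac{1}{\sigma}}=-\int_0^1\gamma u_x^2\,dx$, and the mortality term via \textbf{(H1)}--\textbf{(H2)}. The only real difference is how the source $V=\beta(a)q(x,0,s,t)$ is absorbed. The paper makes the exponential substitution $v_2=\hat v_2e^{\lambda t}$, integrates over $Q_{A,S}\times(\eta,T)$ in one go, and uses Young's inequality, with $\lambda=\frac32$ chosen so that the $\lambda\int\frac{1}{\sigma}\hat v_2^2$ term absorbs the quadratic half of the cross term; undoing the substitution produces the stated constant $e^{\frac32 T}\|\beta\|_\infty^2A$. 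You instead integrate the differential inequality $\frac{d}{dt}E(t)^{1/2}\le\|V(\cdot,t)\|_K$ from $E(\eta)=0$ and apply Cauchy--Schwarz in time, arriving at the polynomial constant $\frac{(T-\eta)^2}{2}\|\beta\|_\infty^2A$, which you correctly observe is dominated by $e^{\frac32 T}\|\beta\|_\infty^2A$ (since $e^{\frac32 T}\ge\frac{9T^2}{8}\ge\frac{T^2}{2}$), so the proposition holds exactly as stated — in fact with a sharper, non-exponential constant and without the ad hoc choice of $\lambda$. The caveat you flagged is genuine and you resolved it the right way: the boundary condition in \eqref{3221} is printed as $v_2(x,a,s_2,t)=0$, but consistency with the adjoint system \eqref{112} (and with the paper's own energy identity, whose boundary terms appear at $a=0$ and $s=0$) requires reading it as the outflow condition at $s=S$; with that reading every boundary term carries the favourable sign and both arguments go through.
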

	\begin{proof}[Proof of Proposition \ref{Propo1}]
		First, we set $$v_2=\hat{v}_{2}e^{\lambda t},\quad\hbox{ for }\lambda>0.$$ 
		Then the function $v_2$ satisfies 
		\begin{equation}\label{humm}
			\dfrac{\partial \hat{v}_2}{\partial t}-\dfrac{\partial \hat{v}_2}{\partial a}-\dfrac{\partial \hat{v}_2}{\partial s}-L \hat{v}_2+(\mu_1(a)+\mu_2(s)+\lambda)\hat{v}_2=e^{-\lambda t}\beta(a)q(x,0,s,t),\quad\hbox{ in }\quad Q_{A.S}\times (\eta,T). 
		\end{equation}
		Multiplying the above equation by $\dfrac{1}{\sigma}\hat{v}_2$ and integrating over $Q_{A,S}\times (\eta,T)$, we get 
		\[\int\limits_{0}^{S}\int\limits_{0}^{A}\int\limits_{0}^{1}\dfrac{1}{\sigma(x)}\hat{v}_{2}^2(x,a,s,T)dxdads+\int\limits_{0}^{T}\int\limits_{0}^{S}\int\limits_{0}^{1}\dfrac{1}{\sigma(x)}\hat{v}_{2}^2(x,0,s,t)dxdsdt+\int\limits_{0}^{T}\int\limits_{0}^{A}\int\limits_{0}^{1}\dfrac{1}{\sigma(x)}\hat{v}_{2}^2(x,a,0,t)dxdadt\]\[+\int\limits_{\eta}^{T}\int_{Q_{A,S}}\gamma(x)\left(\dfrac{\partial\hat{v}_2}{\partial x}\right)^2dxdadsdt+\int\limits_{\eta}^{T}\int_{Q_{A,S}}(\mu_1(a)+\mu_2(s)+\lambda)\dfrac{1}{\sigma(x)}\hat{v}_{2}^2dxdadsdt\]\[=\int\limits_{\eta}^{T}\int_{Q_{A,S}}e^{-\lambda t}\beta(a)q(x,0,s,t)\dfrac{1}{\sigma(x)}\hat{v}_2dxdadsdt.\]
		Using Young inequality and choosing $\lambda=\frac{3}{2}$, we get
		\[ \int\limits_{\eta}^{T}\int_{Q_{A,S}}\dfrac{1}{\sigma(x)}\hat{v}_{2}^2dxdadsdt\leq \|\beta\|^{2}_{\infty}A \int\limits_{\eta}^{T}\int\limits_{0}^{S}\int\limits_{0}^{1} \dfrac{1}{\sigma(x)}q^2(x,0,s,t)dxdsdt,\]
		and therefore
		\[ \int\limits_{\eta}^{T}\int_{Q_{A,S}}\dfrac{1}{\sigma(x)}v_{2}^2dxdadsdt\leq e^{\frac{3}{2}T}\|\beta\|^{2}_{\infty}A \int\limits_{\eta}^{T}\int\limits_{0}^{S}\int\limits_{0}^{1}\dfrac{1}{\sigma(x)} q^2(x,0,s,t)dxdsdt.\]
	\end{proof}
	
	\begin{proof}[Proof of the Theorem 3.1] We split the proof into two parts. 
		
		\underline{\textbf{Part 1: The case $s_1<a_1+S-s_2$.}}
		
		We split the term to be estimated as follows:
		\[\int\limits_{0}^{S}\int\limits_{0}^{A}\int\limits_{0}^{1}\dfrac{1}{\sigma(x)}q^2(x,a,s,T)dxdads=\int\limits_{0}^{S}\int\limits_{0}^{a_1}\int\limits_{0}^{1}\dfrac{1}{\sigma(x)}q^2(x,a,s,T)dxdads+\int\limits_{0}^{S}\int\limits_{a_1}^{A}\int\limits_{0}^{1}\dfrac{1}{\sigma(x)}q^2(x,a,s,T)dxdads.\]
		According to Proposition \ref{p2}, we have 
		\begin{equation}\label{coupp}
			\int\limits_{0}^{S}\int\limits_{0}^{a_1}\int\limits_{0}^{1}\dfrac{1}{\sigma(x)}q^2(x,a,s,T)dxdads\leq C\int\limits_{0}^{A}\int\limits_{0}^{S}\int\limits_{0}^{A}\int\limits_{0}^{1}\dfrac{1}{\sigma(x)}q^2(x,a,s,t)dxdadsdt.
		\end{equation}
		So it remains to estimate the following term \[\int\limits_{0}^{S}\int\limits_{a_1}^{A}\int\limits_{0}^{1}\dfrac{1}{\sigma(x)}q^2(x,a,s,T)dxdads=\int\limits_{0}^{s_0}\int\limits_{a_1}^{A}\int\limits_{0}^{1}\dfrac{1}{\sigma(x)}q^2(x,a,s,T)dxdads+\int\limits_{s_0}^{S}\int\limits_{a_1}^{A}\int\limits_{0}^{1}\dfrac{1}{\sigma(x)}q^2(x,a,s,T)dxdads.\]
		
		\underline{	\textbf{Upper bound on $(0,s_0)$:}}
		
		Since $q=v_1+v_2$, then we need to estimate 
		\[\int\limits_{0}^{s_0}\int\limits_{a_1}^{A}\int\limits_{0}^{1}\dfrac{1}{\sigma(x)}v_{1}^{2}(x,a,s,T)dxdads+\int\limits_{0}^{s_0}\int\limits_{a_1}^{A}\int\limits_{0}^{1}\dfrac{1}{\sigma(x)}v_{2}^{2}(x,a,s,T)dxdads;\]
		In view of Proposition \ref{Propo1}, we have    \[\int\limits_{0}^{s_0}\int\limits_{a_1}^{A}\int\limits_{0}^{1}\dfrac{1}{\sigma(x)}v_{2}^{2}(x,a,s,T)dxdads\leq C(A-a_2)\int\limits_{\eta}^{T}\int\limits_{0}^{S_0}\int\limits_{0}^{1}\dfrac{1}{\sigma(x)}q^2(x,0,s,t)dxdsdt.\]
		Proposition \ref{Prop1} further yields that
		\begin{equation}\label{debay}
			\int\limits_{0}^{s_0}\int\limits_{a_1}^{A}\int\limits_{0}^{1}\dfrac{1}{\sigma(x)}v_{2}^{2}(x,a,s,T)dxdads\leq C_1\int\limits_{0}^{T}\int\limits_{s_1}^{s_2}\int\limits_{a_1}^{a_2}\int_{\omega}\dfrac{1}{\sigma(x)}q^2(x,a,s,t)dxdadsdt.
		\end{equation}
		\begin{figure}[H]
			\begin{overpic}[scale=0.65]{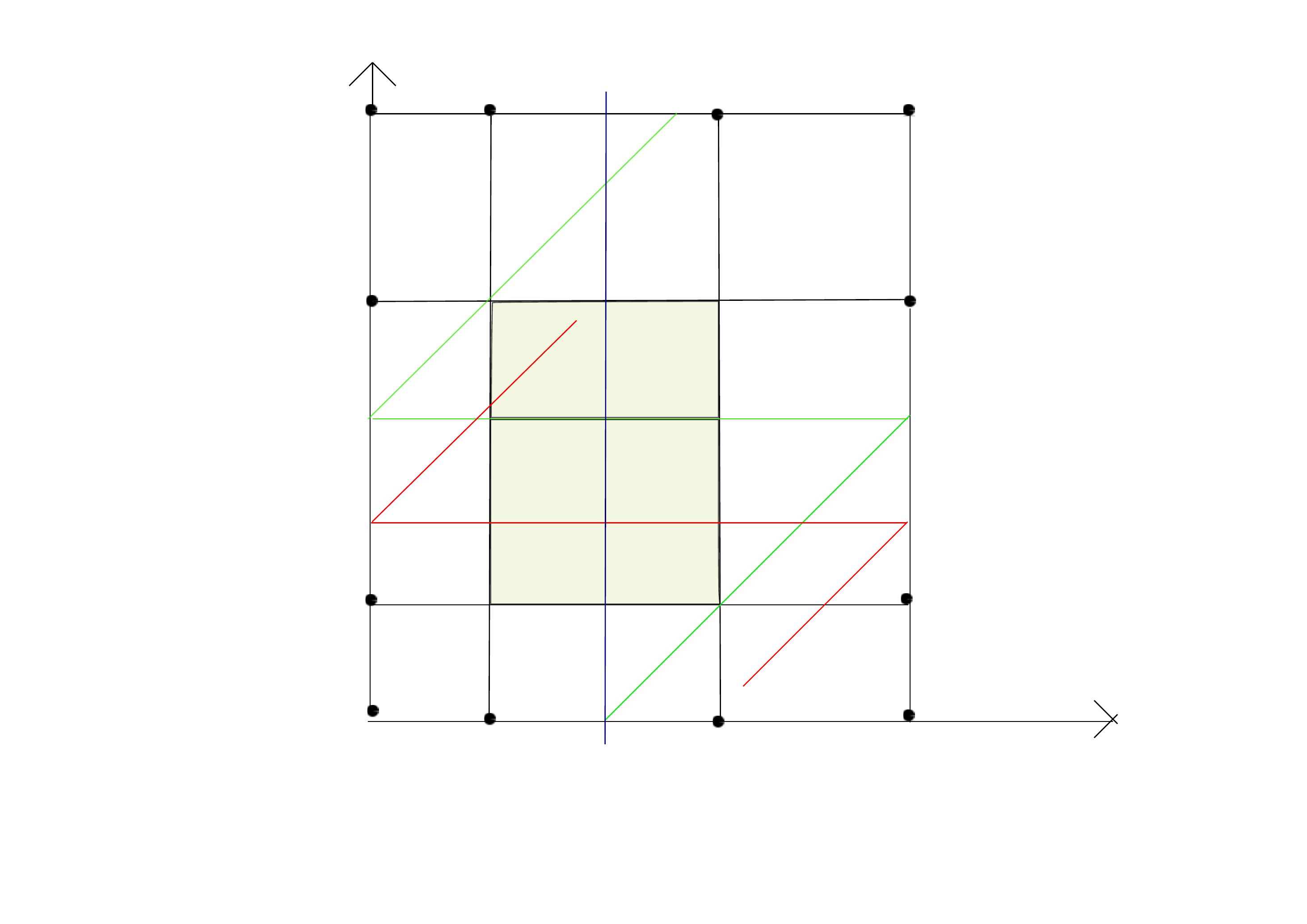}
				\put (26,61) {$S$}	
				\put (69.5,13) {$A$}
				\put (26.5,13.5) {$0$}
				\put (54.5,13) {$a_2$}
				\put (45.3,13) {$a_0$}
				\put (46,15.6) {$.$}
				\put (56.5,18) {$.$}
				\put (26,24.5) {$s_1$}
				\put (26,47) {$s_2$}				
				\put (36.5,13) {$a_1$}
				\put (42.5,35.5) {$Q_{1}$}		
			\end{overpic}
			\caption{Here $\hat{a}=a_2$. The backward characteristics starting from $(a,s,T)$ with $a\in (a_0,A)$ (red line or green line) hits the boundary $(a=A)$, gets renewed by the renewal condition $\beta(a)q(x,0,s,t)$ and then enters the observation domain (red line) or without and leave the domain $\Sigma$ by the boundary $s=S$ (green line).	So, with the conditions $T>A-a_2+S-s_2+a_1+s_1$ all the characteristics starting at $(a,s,T)$ with $a\in (a_0,A)$ get renewed by the renewal condition $\beta(a)q(x,0,s,t)$ with $t>T_1$ and enter the observation domain or without all the domain by the boundary $s=S.$ }
		\end{figure}
		Since $T>A-a_2+s_1+S-s_2$, then there exists $\delta>0$ such that 
		$$T>A-a_2+s_1+S-s_2+2\delta.$$ 
		Therefore,
		$$T-(a_1+S-s_2+\delta)>A-(a_2-(s_1+\delta)).$$
		Moreover, for every $a\in(a_2-(s_1+\delta),A) $, we have  
		\[T-\eta>A-(a_2-(s_1+\delta))>A-a,\] where $\eta=a_1+S-s_2+\delta.$
		Then \[v_1(x,a,s,T)=0\hbox{ a.e. in }\quad(0,1)\times (a_2-(s_1+\delta),A)\times (0,S).\]
		It follows that
		\[\int\limits_{0}^{s_0}\int\limits_{a_1}^{A}\int\limits_{0}^{1}\dfrac{1}{\sigma(x)}v_{1}^2(x,a,s,T)dxdads=\int\limits_{0}^{s_0}\int\limits_{a_1}^{a_2-(s_1+\delta)}\int\limits_{0}^{1}\dfrac{1}{\sigma(x)}v_{1}^{2}(x,a,s,T)dxdads.\]
		Moreover, according Proposition \ref{p2}, we get
		\begin{equation}
			\int\limits_{0}^{S_0}\int\limits_{a_1}^{A}\int\limits_{0}^{1}\dfrac{1}{\sigma(x)}u_{1}^{2}(x,a,s,T)dxdads\leq C\int\limits_{\eta}^{T}\int\limits_{s_1}^{s_2}\int\limits_{a_1}^{a_2}\int_{\omega}\dfrac{1}{\sigma(x)}u_{1}^{2}(x,a,s,t)dxdadsdt.\label{deba}
		\end{equation}
		The fact that $q=v_1+v_2\Longleftrightarrow v_1=q-v_2$ yields  
		\begin{align}\label{deba1}
			\int\limits_{0}^{S_0}\int\limits_{a_1}^{A}\int\limits_0^{1}\dfrac{1}{\sigma(x)}u_{1}^{2}(x,a,s,T)dxdads\leq 2\left(\int\limits_{\eta}^{T}\int\limits_{s_1}^{s_2}\int\limits_{a_1}^{a_2}\int\limits_{l_1}^{l_2}\dfrac{1}{\sigma(x)}q^{2}(x,a,s,t)dQ+\int\limits_{\eta}^{T}\int\limits_{s_1}^{s_2}\int\limits_{a_1}^{a_2}\int\limits_{l_1}^{l_2}\dfrac{1}{\sigma(x)}u_{2}^{2}(x,a,s,t)dQ\right),
		\end{align}
		where we have set $dQ=dxdadsdt$. 
		Thus, by virtue of Proposition \ref{1p} we get 
		\begin{equation}\label{deba11}
			\int\limits_{0}^{s_0}\int\limits_{a_1}^{A}\int\limits_{0}^{1}\dfrac{1}{\sigma(x)}v_{1}^{2}(x,a,s,T)dxdads\leq C\int\limits_{0}^{T}\int\limits_{s_1}^{s_2}\int\limits_{a_1}^{a_2}\int_{\omega}\dfrac{1}{\sigma(x)}q^{2}(x,a,s,t)dxdadsdt.
		\end{equation}
		for a constant $C>0$. The inequalities $(\ref{debay})$ and $(\ref{deba11})$ yield that 
		\begin{equation}
			\int\limits_{0}^{s_0}\int\limits_{a_1}^{A}\int\limits_{0}^{1}\dfrac{1}{\sigma(x)}q^{2}(x,a,s,T)dxdads\leq C_1 \int\limits_{0}^{T}\int\limits_{s_1}^{s_2}\int\limits_{a_1}^{a_2}\int_{\omega}\dfrac{1}{\sigma(x)}q^{2}(x,a,s,t)dxdadsdt.\label{coup1}
		\end{equation}
		for a constant $C_1>0$. Finally, $(\ref{coupp})$ and $(\ref{coup1})$ yield
		\begin{equation}
			\int\limits_{0}^{s_0}\int\limits_{0}^{A}\int\limits_{0}^{1}\dfrac{1}{\sigma(x)}q^{2}(x,a,s,T)dxdads\leq C_T \int\limits_{0}^{T}\int\limits_{s_1}^{s_2}\int\limits_{a_1}^{a_2}\int_{\omega}\dfrac{1}{\sigma(x)}q^{2}(x,a,s,t)dxdadsdt.\label{coup}
		\end{equation}
		
		\underline{	\textbf{Upper bound on $(s_0,S)$: }}
		
		We split the term to be estimated as follows
		\[\int\limits_{s_0}^{S}\int\limits_{0}^{A}\int\limits_0^1\dfrac{1}{\sigma(x)}q^2(x,a,s,T)dxdads=\int\limits_{s_0}^{S}\int\limits_{0}^{a_1}\int\limits_0^1\dfrac{1}{\sigma(x)}q^2(x,a,s,T)dxdads+\int\limits_{s_0}^{S}\int\limits_{a_1}^{A}\int\limits_0^1\dfrac{1}{\sigma(x)}q^2(x,a,s,T)dxdads.\]
		According to Proposition \ref{p2} we have
		\begin{equation}\int\limits_{s_0}^{S}\int\limits_{0}^{a_1}\int\limits_0^1\dfrac{1}{\sigma(x)}q^2(x,a,s,T)dxdads\leq C\int\limits_{0}^{T}\int\limits_{s_1}^{s_2}\int\limits_{a_1}^{a_2}\int_{\omega}\dfrac{1}{\sigma(x)}q^2(x,a,s,t)dxdadsdt.\label{coupp1}\end{equation}
		Since $q=v_1+v_2$ one need to estimate  
		\[\int\limits_{s_0}^{S}\int\limits_{a_1}^{A}\int_{\Omega}\dfrac{1}{\sigma(x)}u_{1}^{2}(x,a,s,T)dxdads+\int\limits_{s_0}^{S}\int\limits_{a_1}^{A}\int_{\Omega}\dfrac{1}{\sigma(x)}u_{2}^{2}(x,a,s,T)dxdads.\]
		Since $T>S-s_2$ and for an appropriate choice of $s_0\in (s_1,s_2)$, by Proposition \ref{p1} we get
		\begin{equation}
			v_2(x,a,s,T)=0\hbox{ a.e. in } (0,1)\times(0,A)\times(s_0,S).
		\end{equation}
		Also, as $T>S-s_2$ then there exists $\delta>0$ such that $T>S-s_2+2\delta$ and then if we take $S_0=s_2-\delta=\eta.$
		Moreover, according to Step 1 of the proof of Proposition \ref{observability-inequality}, we have \[q(x,0,s,t)=0\hbox{ a.e. in }\Omega\times (s_0,S)\times (s_0,T),\]
		Therefore, 
		\[v_2(x,a,s,T)=0\hbox{ a.e. in }\Omega\times (s_0,S)\times (s_0,T).\] Using the Proposition 3.1., we get

		We conclude that 
		
		\begin{equation}
			\int\limits_{S_0}^{S}\int\limits_{0}^{A}\int_{\Omega}\dfrac{1}{\sigma(x)}v^2(x,a,s,T)dxdads\le C_T\int\limits_{0}^{T}\int\limits_{s_1}^{s_2}\int\limits_{a_1}^{a_2}\int_{\omega}\dfrac{1}{\sigma(x)}q^2(x,a,s,t)dxdadsdt.\label{1a}
		\end{equation}
		
		Finally, combining $(\ref{1a})$ and $(\ref{coup})$ we obtain the observability inequality.

		\underline{\textbf{Part 2: the case $s_1>a_1+S-s_2$.}}
		In this part, we need to be able to estimate $q(.,a,s,T)$ if $(a,s)\in (a,s)\in (a_2-s_1,A)\times (0,s_1-a_1)$ in the case where $T>A-a_2+S-s_2+a_1+s_1.$\\ For it, we split this section as follows: \begin{align*}S_{T}^{1}=\{(a,s,T)\hbox{ such that }0<s<s_1-a_1 \hbox{ and } s+A-s_1+a_1<a<A\}\end{align*} and
		\begin{align*}S_{T}^{2}=\{(a,s,T)\hbox{ such that }0<s<s_1-a_1 \hbox{ and } a_2-s_1<a<A-s_1+a_1+s\}.\end{align*}
		All the backwards characteristics starting $(a,s,T)$ with $(a,s,T)\in S_T$ get renewed by the renewal term with $a\in (A-\delta,A)$ and need at least $s_1-\delta$ to enter the observation domain.\\
		In $S^{1}_{T}$ we have
		\[q(x,a,s,T)=\int\limits_{T-A+a}^{T}\beta(a+T-l)q(x,0,s+T-l,l)dl\]
		\[=\int\limits_{a}^{A-a}\beta(\sigma)q(x,0,s+\sigma-a,a+T-\sigma)d\sigma.\]
		Then
		\[\int_{\Omega}\dfrac{1}{\sigma}q^2(x,a,s,T)\leq A\|\beta\|^{2}_{\infty}\int\limits_{a}^{A-a}\int_{\Omega}\dfrac{1}{\sigma}q^2(x,0,s+\sigma-a,a+T-\sigma)d\sigma\]
		Let $\delta>0$ be such that $T>s_1+\delta.$ Subdividing the interval $(0,s_1-a_1)$ in $M$ intervals as follows:
		\[0=\delta_0<\delta_1<\delta_2<\delta_3<...<\delta_M\hbox{ such that }\delta_i-\delta_{i+1}<\delta\hbox{, } i\in\{0,1,...,M-1\},\] with
		\[\delta_0=0\hbox{ and }\delta_M=s_1-a_1\]
		We denote by \[S_{T,i}^{1}=\{(a,s,T)\hbox{ such that }\delta_i<s<\delta_{i+1} \hbox{ and } s+A-\delta_{i+1}<a<A-\delta_i\}.\]
		Moreover, in $S_{T,i},$ \[\max_{S_{T,i}^{1}}\{T-S+s,T-A+a\}>T-\delta_{i+1}+s.\]
		Therefore, if $(a,s,T)\in S^{1}_{T,i}\hbox{ and }T>s_1+\delta,$ then $T-\delta_{i+1}>s_1-\delta_i$
		Moreover, for every $\eta$ and $T$ such that $s_1-\delta_{i}<\eta<T,$ we have \[\int\limits_{\eta}^{T}\int\limits_{\delta_i}^{\delta_{i+1}}\int_{\Omega}\dfrac{1}{\sigma}q^2(x,0,s,t)dxdsdt\leq C_{T}\int\limits_{0}^{T}\int\limits_{s_1}^{s_2}\int\limits_{a_1}^{a_2}\int_{\omega}\dfrac{1}{\sigma}q^2(x,a,s,t)dxdadsdt.\] Therefore,
		\[\int\limits_{0}^{T}\int\limits_{\delta_i}^{\delta_{i+1}}\dfrac{1}{\sigma}q^2(x,a,s,T)dxads\leq C_T\int\limits_{0}^{T}\int\limits_{s_1}^{s_2}\int\limits_{a_1}^{a_2}\int_{\omega}\dfrac{1}{\sigma}q^2(x,a,s,t)dxdadsdt.\] Then for $T>s_1,$ we have
		\[\int_{S_{T}^{1}}\dfrac{1}{\sigma}q^2(x,a,s,T)dxdadt\leq C_T\int\limits_{0}^{T}\int\limits_{s_1}^{s_2}\int\limits_{a_1}^{a_2}\int_{\omega}\dfrac{1}{\sigma}q^2(x,a,s,t)dxdadsdt.\]
		\begin{figure}[H]
			\begin{overpic}[scale=0.65]{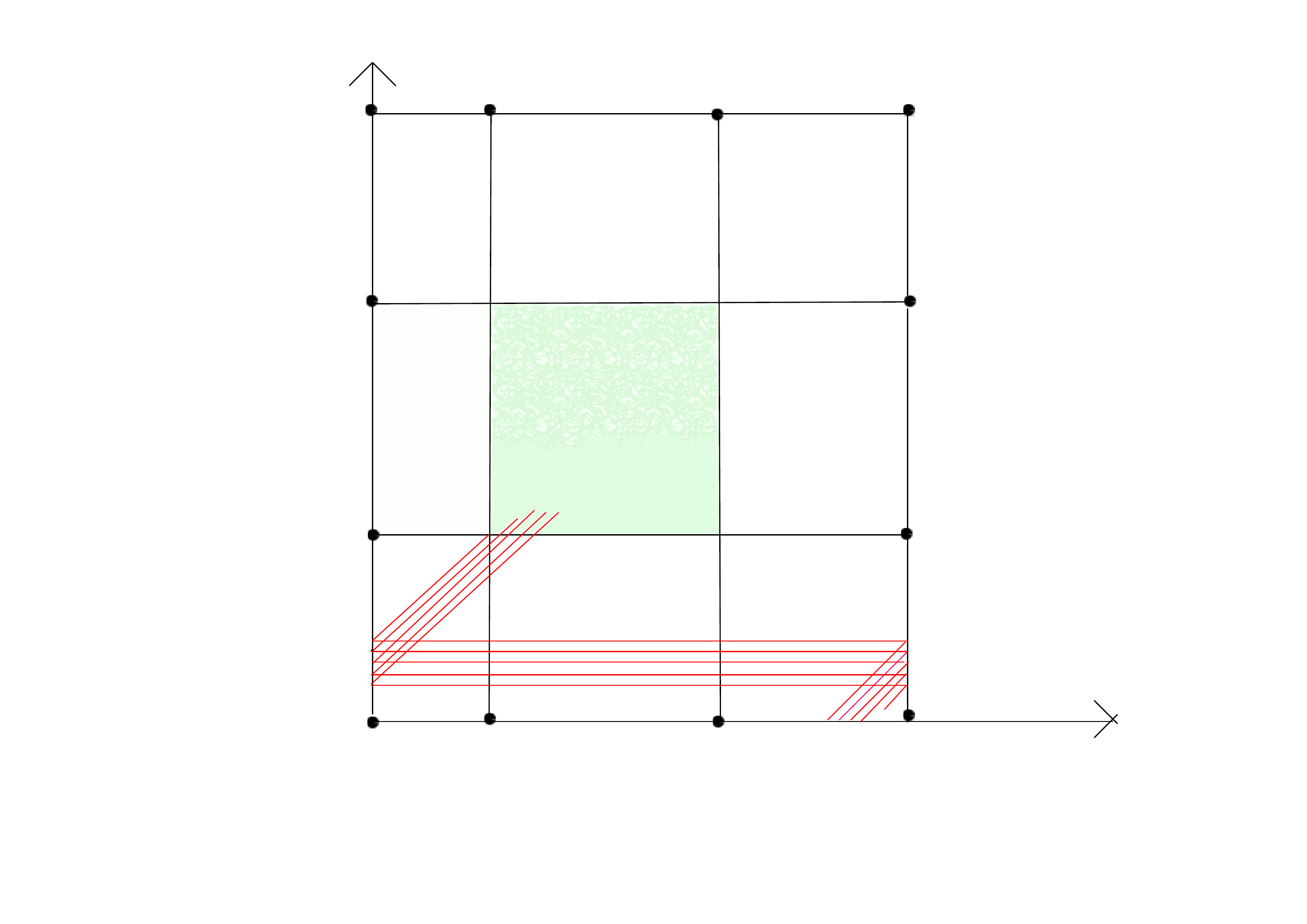}
				\put (26,61) {$S$}	
				\put (69.5,13) {$A$}
				\put (26.5,13.5) {$0$}
				\put (54.5,13) {$a_2$}
				\put (63,13.5) {$b$}
				\put (63,15) {$\bullet$}
				\put (26,29) {$s_1$}
				\put (28,21.25) {$\bullet$}
				\put (21.5,21.5) {$s_1-a_1$}
				\put (26,47) {$s_2$}			
				\put (36.5,13) {$a_1$}
				\put (42.5,35.5) {$Q_{1}$}	
				\put(50,5){$b=A-s_1+a_1$}
			\end{overpic}
			\caption{Illustration in the case $s_1>a_1$ \\In the case where $s_1>a_1$, only the backwards characteristics starting at $(a,s,T)\in S_T$ get renewed by the renewal term with $s\in(0,s_1-a_1)$}	
		\end{figure}
	\end{proof}

\end{document}